\title[Homogeneous evolution equations with perturbation]{Regularizing
effect of homogeneous evolution equations with perturbation}
\author{Daniel Hauer} \address[Daniel Hauer]{School of Mathematics and
  Statistics, The University of Sydney, NSW 2006, Australia}
\email{\href{mailto:daniel.hauer@sydney.edu.au}{\nolinkurl{daniel.hauer@sydney.edu.au}}}
\thanks{The author's research was supported by an Australian Research
  Council grant DP200101065.}
\subjclass[2020]{47H20, 47h06, 47H14, 47J35, 35B65.}
\keywords{Nonlinear semigroups, accretive operators, Aronson-B\'enilan estimates,
  regularity of time-derivative, homogenous operators, 
  $p$-Laplace Beltrami operator, Dirichlet-to-Neumann operator on
  manifolds.}
\numberwithin{equation}{section}
\newtheorem{theorem}{Theorem}[section]
\newtheorem{proposition}[theorem]{Proposition}
\newtheorem{lemma}[theorem]{Lemma}
\newtheorem{corollary}[theorem]{Corollary}
\theoremstyle{definition}
\newtheorem{definition}[theorem]{Definition}
\newtheorem{remark}[theorem]{Remark}
\newcommand\R{{\mathbb{R}}}
\newcommand\N{\mathbb{N}}
\newcommand\E{\mathcal{E}}
\newcommand\dx{\mathrm{d}x }
\newcommand\dy{\mathrm{d}y }
\newcommand\dr{\mathrm{d}r }
\newcommand\ds{\mathrm{d}s }
\newcommand\dmu{\mathrm{d}\mu}
\newcommand\dt{\mathrm{d}t }
\newcommand\td{\mathrm{d} }
\DeclareMathOperator*{\divi}{div\!\!}
\DeclareMathOperator{\Var}{Var}
\newcommand\abs[1]{\lvert#1\rvert}
\newcommand\labs[1]{\left\lvert#1\right\rvert}
\newcommand\norm[1]{\lVert#1\rVert}
\newcommand\lnorm[1]{\left\lVert#1\right\rVert}
\definecolor{darkred}{rgb}{0.7,0.1,0.1}
\def\divi{\hbox{\rm div\,}}
\begin{document}
\date{\today}
\maketitle

\tableofcontents

\begin{abstract}
  Since the pioneering works by
  Aronson  \& B\'enilan [C. R. Acad. Sci. Paris S\'er., 1979],
  and B\'enilan \& Crandall [Johns Hopkins Univ. Press, 1981]
  it is well-known that first-order evolution problems
  governed by a nonlinear but homogeneous operator admit the smoothing
  effect that every corresponding mild solution is Lipschitz
  continuous at every positive time. Moreover, if the underlying Banach
  space has the Radon-Nikod\'ym property, then these mild solution is
  a.e. differentiable, and the time-derivative 
  satisfies glo\-bal and point-wise bounds.
  
  In this paper, we show that these results remain true if the
  homogeneous operator is perturbed by a Lipschitz continuous
  mapping. More precisely, we establish global \emph{$L^1$
    Aronson-B\'enilan type estimates} and \emph{point-wise
    Aronson-B\'enilan type estimates}. We apply our theory to derive
  global $L^q$-$L^{\infty}$-estimates on the time-derivative of the
  perturbed diffusion problem governed by the Dirichlet-to-Neumann operator
  associated with the $p$-Laplace-Beltrami operator and lower-order
  terms on a compact
  Riemannian manifold with a Lipschitz boundary. 
\end{abstract}

% --------------------------------------------------------------------
%
%                   I N T R O D U C T I O N
%
% --------------------------------------------------------------------

\section{Introduction and main results}

In this paper, we establish global regularity estimates on the
time-derivative $\frac{\td u}{dt}$ of \emph{mild} solutions $u$ (see
Definition~\ref{def:mild-solution}) to the Cauchy problem associated
with the perturbed operator $A+F$;
\begin{equation}
  \label{eq:10bisF}
  \begin{cases}
    \frac{\td u}{\dt}+A(u(t))+ F(u(t))\ni f(t) & \text{for
      $t\in (0,T)$,}\\
    \phantom{\frac{\td u}{\dt}+A(u(t))+ F(}u(0)=u_{0},&
  \end{cases}
\end{equation}
for sufficiently regular $f : [0,T]\to X$ and initial data $u_{0}$.
To ensure the well-posedness of Cauchy problem~\eqref{eq:10bisF}, we
assume that $A$ is an $m$-accretive, possibly, multi-valued operator
$A : D(A)\to 2^{X}$ on a Banach space $(X,\norm{\cdot}_{X})$ (see
Definition~\ref{def:quasi-accretive}) with \emph{effective domain}
$D(A) :=\{u\in X\,\vert\,Au\neq \emptyset\}$ and $F : X\to X$ a
Lipschitz continuous mapping with constant $\omega\ge 0$ satisfying
$F(0)=0$. 

The crucial condition to obtain global regularity estimates of $\frac{\td u}{dt}$
for mild solutions $u$ of~\eqref{eq:10bisF} is that $A$ is
\emph{homogeneous} of order $\alpha\neq 1$; that is, $(0,0)\in A$ and
\begin{equation}
  \label{eq:41}
  A(\lambda u)=\lambda^{\alpha}Au\qquad\text{ for all $\lambda\ge 0$
and $u\in D(A)$.}
\end{equation}
We emphasize that the governing operator $A+F$ in Cauchy
problem~\eqref{eq:10bisF} is not anymore homogeneous. Thus, our first
main result can be understood as a perturbation theorem.

\begin{theorem}[{$L^1$ Aronson-B\'enilan type estimates}]\label{thm:main1}
  For given $\alpha\in \R\setminus\{1\}$, let $A$ be an $m$-accretive
  operator in $X$ which is homogeneous of order $\alpha$
  and suppose, the mapping $F : X \to X$ is Lipschitz continuous on
  $X$ with constant $\omega\ge 0$, $F(0)=0$, and let $f\in BV(0,T;X)$.
  Then for every $u_{0}\in D(A)$, the mild solution $u$ of~\eqref{eq:10bisF} satisfies
   \begin{equation}
     \label{eq:28}
    \limsup_{h\to 0+}\frac{\norm{u(t+h)-u(t)}_{X}}{h}
    \le\frac{1}{t}\!\! \left[a_{\omega}(t)+ \omega\!\!\int_{0}^{t}a_{\omega}(s)
      e^{\omega (t-s)}\ds\right]
  \end{equation}
  for a.e. $t\in (0,T)$, where
  \begin{equation}
    \label{eq:54}
  \begin{split}
    a_{\omega}(t)&:= V_{0}(f,t) + \frac{1}{\abs{1-\alpha}}\bigg[\left(1+e^{\omega
    t} \right)\,\norm{u_{0}}_{X}\bigg.\\
  &\hspace{1cm}\left. +\int_{0}^{t}\norm{f(s)}_{X}\,\ds +\omega\,\int_{0}^{t} \int_{0}^{s}e^{-\omega r} \norm{f(r)}_{X}\dr\,\ds\right].  
  \end{split}
\end{equation}
  and $V_{0}(f,\cdot)$ is given by~\eqref{eq:64} below. In particular,
  if for $u_{0}\in D(A)$, the right-hand side derivative $\frac{\td
  u}{\dt}_{\!\! +}$ exists, then 
    \begin{equation}
     \label{eq:29}
     \lnorm{\frac{\td u}{\dt}_{\!\! +}\!\!(t)}_{X}
    \le\frac{1}{t}\!\! \left[a(t)+ \omega\!\!\int_{0}^{t}a(s)
      e^{\omega (t-s)}\ds\right]\quad\text{for a.e. $t\in (0,T)$.}
  \end{equation}
\end{theorem}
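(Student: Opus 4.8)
The plan is to transfer the classical rescaling argument of B\'enilan and Crandall for homogeneous operators to the perturbed Cauchy problem~\eqref{eq:10bisF} and then to close the resulting inequality by a Gr\"onwall argument; the iterated integral in~\eqref{eq:28} is produced precisely by that Gr\"onwall step, so the heart of the matter is the homogeneity-based difference-quotient estimate that feeds into it.

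Fix $t\in(0,T)$ and, for $\lambda>0$ close to $1$ with $\lambda^{\alpha-1}>1$, set $v_{\lambda}(s):=\lambda\,u(\lambda^{\alpha-1}s)$ for $s\in[0,t]$; this is well defined once $\lambda$ is close enough to $1$, since $t<T$. Regarding $u$ as a mild solution of $\frac{\td u}{\dt}+A(u)\ni f-F(u)$, with $f-F(u)$ treated as an external forcing, the homogeneity~\eqref{eq:41} — together with the invariance of mild solutions under the simultaneous state rescaling by $\lambda$ and time rescaling by $\lambda^{\alpha-1}$, which one verifies at the level of $\varepsilon$-discretizations using $A(\lambda w)=\lambda^{\alpha}Aw$ — shows that $v_{\lambda}$ is the mild solution, with initial value $\lambda u_{0}$, of $\frac{\td v}{\dt}+A(v)\ni\lambda^{\alpha}\bigl(f(\lambda^{\alpha-1}s)-F(u(\lambda^{\alpha-1}s))\bigr)$. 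Since $A$ is $m$-accretive, B\'enilan's $L^{1}$-estimate for mild solutions gives
\[
  \norm{v_{\lambda}(t)-u(t)}_{X}\le\abs{1-\lambda}\,\norm{u_{0}}_{X}
  +\int_{0}^{t}\Bigl\|\lambda^{\alpha}\bigl(f(\lambda^{\alpha-1}s)-F(u(\lambda^{\alpha-1}s))\bigr)-\bigl(f(s)-F(u(s))\bigr)\Bigr\|_{X}\ds .
\]
Moreover, comparing $u$ with the stationary solution $0$ — admissible since $(0,0)\in A$ and $F(0)=0$ — and using that $A+F$ is $\omega$-quasi-accretive yields the a priori bound $\norm{u(\tau)}_{X}\le e^{\omega\tau}\norm{u_{0}}_{X}+\int_{0}^{\tau}e^{\omega(\tau-r)}\norm{f(r)}_{X}\dr$ for $\tau\in[0,T]$, which I use to control the norms $\norm{u(\lambda^{\alpha-1}s)}_{X}$ and $\norm{u(t)}_{X}$ below.

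Now put $h=h(\lambda):=(\lambda^{\alpha-1}-1)t$, so that $h\to0+$ as $\lambda\to1$ and $u(t+h)=\lambda^{-1}v_{\lambda}(t)$; hence
\[
  \norm{u(t+h)-u(t)}_{X}\le\lambda^{-1}\norm{v_{\lambda}(t)-u(t)}_{X}+\abs{\lambda^{-1}-1}\,\norm{u(t)}_{X}.
\]
I divide by $h$ and let $\lambda\to1$. The elementary limits $\abs{1-\lambda}/h\to(\abs{1-\alpha}\,t)^{-1}$ and $\abs{\lambda^{-1}-1}/h\to(\abs{1-\alpha}\,t)^{-1}$ — this is the step that uses $\alpha\neq1$ — dispose of the data term and of the $\norm{u(t)}_{X}$-term, into which I insert the a priori bound, and this is the source of the factor $e^{\omega t}$ appearing in~\eqref{eq:54}. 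In the forcing integral I apply the triangle inequality and the Lipschitz estimate $\norm{F(u(s))-F(u(\lambda^{\alpha-1}s))}_{X}\le\omega\norm{u(s)-u(\lambda^{\alpha-1}s)}_{X}$, together with the bound on $\norm{u(\lambda^{\alpha-1}s)}_{X}$, so as to isolate, on the one hand, the inhomogeneity contribution — which, divided by $h$ and passed to the limit, is controlled by $V_{0}(f,t)$ of~\eqref{eq:64} together with magnitude terms involving $\norm{f}_{X}$ — and, on the other hand, the term $\omega\norm{u(s)-u(\lambda^{\alpha-1}s)}_{X}=\omega\norm{u(s)-u(s+hs/t)}_{X}$. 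The decisive observation is that this last term re-inserts the very difference quotient under estimation, now at the time $s$: since $u_{0}\in D(A)$ and $f\in BV(0,T;X)$ the mild solution $u$ is Lipschitz continuous on $[0,T]$, so these difference quotients are uniformly bounded, and I may carry $\limsup_{h\to0+}$ inside the integral by the reverse Fatou lemma, obtaining a term $\frac{\omega}{t}\int_{0}^{t}s\,\limsup_{k\to0+}\bigl(\norm{u(s+k)-u(s)}_{X}/k\bigr)\,\ds$. Collecting everything and writing $\psi(t):=t\limsup_{h\to0+}\norm{u(t+h)-u(t)}_{X}/h$, one is led to an inequality of the form $\psi(t)\le a_{\omega}(t)+\omega\int_{0}^{t}\psi(s)\,\ds$ with $a_{\omega}$ as in~\eqref{eq:54}; Gr\"onwall's lemma then gives $\psi(t)\le a_{\omega}(t)+\omega\int_{0}^{t}a_{\omega}(s)e^{\omega(t-s)}\,\ds$, which is exactly~\eqref{eq:28} (for a.e.\ $t$). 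Finally,~\eqref{eq:29} follows at once: wherever $\frac{\td u}{\dt}_{+}(t)$ exists one has $\lim_{h\to0+}(u(t+h)-u(t))/h=\frac{\td u}{\dt}_{+}(t)$ in $X$, so $\limsup_{h\to0+}\norm{u(t+h)-u(t)}_{X}/h=\norm{\frac{\td u}{\dt}_{+}(t)}_{X}$, and~\eqref{eq:28} applies.

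The steps I expect to demand the most care are three. First, the proof that $v_{\lambda}$ is indeed the mild solution of the rescaled problem: this is where the homogeneity~\eqref{eq:41} is essential, and the simultaneous scaling of state and time must be reconciled with the fact that $A+F$ is only $\omega$-quasi-accretive. Second, the interchange of $\limsup_{h\to0+}$ with the integral in the forcing term: this genuinely uses the Lipschitz regularity of $u$ — hence the hypotheses $u_{0}\in D(A)$ and $f\in BV(0,T;X)$ — and is not a mere formality. Third, and most laborious, the bookkeeping that pins down the constant: one must choose the splitting of $\lambda^{\alpha}\bigl(f-F(u)\bigr)(\lambda^{\alpha-1}s)-\bigl(f-F(u)\bigr)(s)$ carefully and carry out the change of variables $\sigma=\lambda^{\alpha-1}s$ in the integrals \emph{before} passing to the limit $\lambda\to1$, in order to see that the terms surviving the limit are bounded by precisely $a_{\omega}(t)$ as defined in~\eqref{eq:54}.
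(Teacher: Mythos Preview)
Your proposal is correct and follows essentially the same route as the paper: you view $u$ as a mild solution of the unperturbed problem with forcing $\tilde f=f-F(u)$, invoke the homogeneity scaling at the level of mild solutions (the paper isolates this as Proposition~\ref{acrethomog}), compare via the contraction estimate for $A$ (Lemma~\ref{thm:1bis}), insert the $\omega$-quasi a~priori bound on $u$, use Lipschitz regularity of $u$ (Proposition~\ref{prop:Lipschitz-property}) to justify the reverse Fatou step, and close with Gronwall (Lemma~\ref{lem:1}). Your three flagged ``care points'' match exactly the places where the paper does the work; the only cosmetic difference is that you parametrize the scaling by $v_\lambda(s)=\lambda\,u(\lambda^{\alpha-1}s)$ whereas the paper writes $\lambda^{1/(\alpha-1)}u(\lambda t)$ and organizes the resulting difference as a three-term splitting (equation~\eqref{eq:42}) rather than your two-term one.
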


At the first view, it seems that in Theorem~\ref{thm:main1}, the 
hypothesis $u_{0}\in D(A)$ merely provides a global point-wise estimate
on the time-derivative $\frac{\td u}{\dt}(t)$, but not a
regularization effect. This hypothesis together with the condition
$f\in BV(0,T;X)$ imply that the mild solution $u$ is
Lipschitz continuous (see Proposition~\ref{prop:Lipschitz-property}),
which is required to apply Gronwall's lemma (see Lemma~\ref{lem:1}. But,
starting from this, a standard density argument combined with an
appropriate compactness result yield that estimate~\eqref{eq:28} holds
for all mild solutions $u$ of Cauchy problem~\eqref{eq:10bisF}.

For example, under the additional hypothesis that the Banach space $X$
is \emph{reflexive}, one has that the closed unit ball of $X$ is
weakly sequentially compact. Now, for every given
$u_{0}\in \overline{D(A)}^{\mbox{}_{X}}$, there is a sequence
$(u_{n}^{(0)})_{n\ge 1}$ in $D(A)$ such that $u_{n}^{(0)}\to u_{0}$ in
$X$ and by the $\omega$-quasi contractivity of the semigroup $\{T_{t}\}_{t=0}^{T}$
generated by $-(A+F)$ on $\overline{D(A)}^{\mbox{}_{X}}\times
L^{1}(0,T;X)$ (see Definition~\ref{def:semigroup}), one has that
$T_{t}(u_{n}^{(0)},f)\to T_{t}(u_{0},f)$ in $X$ as $n\to\infty$. Thus, if for
every $n\ge 1$, $u_{n}(t):=T_{t}(u_{n}^{(0)},f)$, $t\ge 0$,
satisfies~\eqref{eq:29}, then the sequence
$(\frac{\td u_{n}}{\dt})_{n\ge 1}$ is bounded $L^{\infty}(\delta,T;X)$
for every $\delta\in (0,T)$. From this, one can conclude the following
smoothing effect of such semigroups acting on reflexive Banach spaces
(see also Corollary~\ref{cor:Lipschitz-continuous-case} in Section~\ref{sec:semigroups}). 

\begin{corollary}
  \label{cor:RN-Lipschitz-case}
  Let $A$ be an $m$-accretive operator on a reflexive Banach space
  $X$, $F : X\to X$ a Lipschitz continuous mapping with
  Lipschitz-constant $\omega\ge 0$ satisfying $F(0)=0$, and
  $\{T_{t}\}_{t=0}^{T}$ the semigroup generated by $-(A+F)$ on
  $\overline{D(A)}^{\mbox{}_{X}}\times L^{1}(0,T;X)$. If $A$ is
  homogeneous of order $\alpha\neq 1$, then for every
  $u_{0}\in \overline{D(A)}^{\mbox{}_{X}}$ and $f\in BV(0,T;X)$, the
  unique mild solution $u$ of Cauchy
  problem~\eqref{eq:10bisF} is strong and 
  satisfies~\eqref{eq:29} for a.e. $t\in (0,T)$.
\end{corollary}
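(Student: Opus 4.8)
\emph{Approach.} I would deduce Corollary~\ref{cor:RN-Lipschitz-case} from Theorem~\ref{thm:main1} by a density-and-compactness argument, exactly along the lines indicated in the paragraph preceding the statement; the only genuinely delicate point is the passage of the \emph{pointwise} bound~\eqref{eq:29} to the limit. We may assume $u_{0}\notin D(A)$, since otherwise the claim is already contained in Theorem~\ref{thm:main1}. Fix $u_{0}\in\overline{D(A)}^{\mbox{}_{X}}$ and $f\in BV(0,T;X)$, choose a sequence $(u_{n}^{(0)})_{n\ge1}\subseteq D(A)$ with $u_{n}^{(0)}\to u_{0}$ in $X$, and set $u_{n}(t):=T_{t}(u_{n}^{(0)},f)$, $u(t):=T_{t}(u_{0},f)$ for $t\in[0,T]$. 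Since $X$ is reflexive it has the Radon--Nikod\'ym property; hence, by Proposition~\ref{prop:Lipschitz-property}, each $u_{n}$ is Lipschitz continuous on $[0,T]$, so it is strongly differentiable a.e.\ and, by the well-known characterization of mild solutions, a strong solution of~\eqref{eq:10bisF} with initial value $u_{n}^{(0)}$. Thus Theorem~\ref{thm:main1} applies to each $u_{n}$, which therefore satisfies~\eqref{eq:29} with $a$ replaced by the quantity $a^{(n)}$ obtained from~\eqref{eq:54} upon replacing $\norm{u_{0}}_{X}$ by $\norm{u_{n}^{(0)}}_{X}$. In particular, $(\frac{\td u_{n}}{\dt})_{n\ge1}$ is bounded in $L^{\infty}(\delta,T;X)$ for every $\delta\in(0,T)$, and by the $\omega$-quasi-contractivity of $\{T_{t}\}$ one has $u_{n}\to u$ uniformly on $[0,T]$.

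\emph{Compactness and strong solution.} Fix $\delta\in(0,T)$. Because $X$ is reflexive, $L^{2}(\delta,T;X)$ is reflexive, so along a subsequence $\frac{\td u_{n}}{\dt}\rightharpoonup g$ weakly in $L^{2}(\delta,T;X)$ with $g\in L^{\infty}(\delta,T;X)$. Passing to the limit in $u_{n}(t)=u_{n}(\delta)+\int_{\delta}^{t}\frac{\td u_{n}}{\dt}(s)\,\ds$, tested against $X^{\ast}$-valued step functions, identifies $g$ with the distributional derivative of $u$ on $(\delta,T)$; hence $u$ is Lipschitz on $[\delta,T]$ with $\frac{\td u}{\dt}=g\in L^{\infty}(\delta,T;X)$ there. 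By the Radon--Nikod\'ym property, $u$ is strongly differentiable a.e.\ on $(\delta,T)$, and, again by the standard characterization of mild solutions, $u$ is a strong solution of~\eqref{eq:10bisF} on $(\delta,T)$; since $\delta\in(0,T)$ is arbitrary, $u$ is a strong solution on $(0,T)$.

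\emph{Passing the pointwise estimate.} Weak $L^{2}$-convergence does not preserve a.e.\ bounds, so here I would invoke Mazur's lemma: there are finite convex combinations $v_{k}:=\sum_{i}\lambda_{i}^{(k)}\frac{\td u_{n_{i}}}{\dt}$ (with indices $n_{i}\ge k$) that converge to $g$ in $L^{2}(\delta,T;X)$, hence, along a further subsequence, for a.e.\ $t\in(\delta,T)$. For each such $k$ and a.e.\ $t$,
\begin{equation*}
  \norm{v_{k}(t)}_{X}\le\sum_{i}\lambda_{i}^{(k)}\,\lnorm{\frac{\td u_{n_{i}}}{\dt}(t)}_{X}
  \le\sum_{i}\lambda_{i}^{(k)}\,\frac{1}{t}\left[a^{(n_{i})}(t)+\omega\int_{0}^{t}a^{(n_{i})}(s)\,e^{\omega(t-s)}\,\ds\right].
\end{equation*}
Since $\norm{u_{n}^{(0)}}_{X}\to\norm{u_{0}}_{X}$ and $a^{(n)}$ depends continuously on this value, $a^{(n)}\to a$ uniformly on $[0,T]$, whence the right-hand side above converges to the right-hand side of~\eqref{eq:29}. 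Letting $k\to\infty$ along the a.e.-convergent subsequence yields $\lnorm{\frac{\td u}{\dt}(t)}_{X}=\norm{g(t)}_{X}\le\frac{1}{t}\big[a(t)+\omega\int_{0}^{t}a(s)e^{\omega(t-s)}\,\ds\big]$ for a.e.\ $t\in(\delta,T)$, and, $\delta\in(0,T)$ being arbitrary, for a.e.\ $t\in(0,T)$. Uniqueness of the mild solution being part of the definition of $\{T_{t}\}$, this identifies $u=T_{\cdot}(u_{0},f)$ as the asserted strong solution.

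\emph{Main obstacle.} I expect the third step to be the crux: reflexivity supplies only \emph{weak} compactness of $(\frac{\td u_{n}}{\dt})_{n}$, whereas~\eqref{eq:29} is a pointwise-in-time statement, so one has to convexify (Mazur) and exploit that the bounding functions $a^{(n)}$ converge \emph{uniformly} to $a$ to recover the estimate at the limit. The remaining ingredients---Lipschitz regularity of the $u_{n}$ from Proposition~\ref{prop:Lipschitz-property}, the quasi-contractivity of $\{T_{t}\}$, and the mild-to-strong passage under the Radon--Nikod\'ym property---are standard.
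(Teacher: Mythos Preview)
Your argument is correct and, up to the point of establishing that $u$ is a strong solution, coincides with the paper's: approximation by $u_{n}^{(0)}\in D(A)$, uniform convergence $u_{n}\to u$ via the $\omega$-quasi-contractivity, weak compactness of $(\frac{\td u_{n}}{\dt})_{n}$ in $L^{2}(\delta,T;X)$, identification of the weak limit with the distributional derivative, and the Radon--Nikod\'ym property to conclude a.e.\ differentiability.

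Where you diverge is in the last step, recovering the pointwise bound~\eqref{eq:29} for $u$ itself. You convexify via Mazur's lemma and pass the estimate through the resulting a.e.-convergent convex combinations, using that $a^{(n)}\to a$ uniformly. The paper instead exploits a \emph{time-shift}: once $u$ is known to be strong, one has $u(\varepsilon)\in D(A)$ for a.e.\ $\varepsilon>0$, so $\tilde{u}(t):=u(t+\varepsilon)$ falls directly under the hypotheses of Theorem~\ref{thm:main1} (with initial datum $u(\varepsilon)$ and forcing $f(\cdot+\varepsilon)\in BV$), yielding~\eqref{eq:29} with $\norm{u(\varepsilon)}_{X}$ in place of $\norm{u_{0}}_{X}$; sending $\varepsilon\to0+$ finishes. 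The paper's route is shorter and sidesteps any soft-analysis passage to the limit, at the cost of needing the observation that strong solutions take values in $D(A)$ almost everywhere. Your Mazur argument is a valid alternative, and your identification of this step as the crux is accurate.
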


We outline the proof of this corollary in
Section~\ref{sec:semigroups}.\medskip

Our second main result of this paper is concerned with a point-wise
estimate on the time-derivative $\frac{\td u}{dt}$ of
positive\footnote{That is, $u\ge 0$ for the given partial ordering
  $``\!\!\!\le''$ on $X$.} strong solutions $u$ of the homogeneous Cauchy
problem
\begin{equation}
  \label{eq:10bis-no-F}
  \begin{cases}
    \frac{\td u}{\dt}+A(u(t))+ F(u(t))\ni 0 & \text{for
      $t\in (0,T)$,}\\
    \phantom{\frac{\td u}{\dt}+A(u(t))+ F(}u(0)=u_{0},&
  \end{cases}
\end{equation}
under the additional hypothesis that the underlaying Banach space $X$
is equip\-ped with a partial ordering $``\!\!\!\le''$ such that the
triple $(X,\norm{\cdot}_{X},\le)$ defines an Banach lattice, and if
for this ordering $``\!\!\!\le''$, every mild solution $u$
of~\eqref{eq:10bis-no-F} is \emph{order-preserving}; that is, for every
$u_{0}$, $\hat{u}_{0}\in \overline{D(A)}^{\mbox{}_{X}}$ with
corresponding mild solutions $u$ and $\hat{u}$ of~\eqref{eq:10bis-no-F},
one has that $u_{0}\le \hat{u}_{0}$ implies $u(t)\le \hat{u}(t)$ for
all $t\in (0,T]$.

\begin{theorem}[{Point-wise Aronson-B\'enilan type estimates}]
  \label{thm:2}
  Let $A$ be an $m$-accre\-tive operator on $X$,
  $(X, \norm{\cdot}_{X},\le)$ a Banach lattice, and let $F : X \to X$
  be a Lipschitz continuous mapping on $X$ with constant $\omega\ge 0$
  satisfying $F(0)=0$. Suppose, for $\alpha\in \R\setminus\{1\}$, $A$
  is homogeneous of order $\alpha$ and every mild solution $u$
  of~\eqref{eq:10bis-no-F} is \emph{order-preserving}. For every positive
  $u_{0}\in \overline{D(A)}^{\mbox{}_{X}}$,
  the mild solution $u$ of~\eqref{eq:10bis-no-F} satisfies
  \begin{displaymath}
   \frac{u(t+h)-u(t)}{h}\ge
   \frac{(1+\frac{h}{t})^{\frac{1}{1-\alpha}}-1}{h}\frac{u(t)}{t}+g_{h}(t)
   \qquad\text{if $\alpha>1$}
 \end{displaymath}
 and
 \begin{displaymath}
   \frac{u(t+h)-u(t)}{h}\le
   \frac{(1+\frac{h}{t})^{\frac{1}{1-\alpha}}-1}{h}\frac{u(t)}{t}+g_{h}(t)
   \qquad\text{if $\alpha<1$,}
 \end{displaymath}
 for every $t$, $h>0$, where $g_{h} : (0,\infty)\to X$ is a continuous
 function. Further, for positive $u_{0}\in
 \overline{D(A)}^{\mbox{}_{X}}$, if the right hand-side derivative
 $\frac{\td u}{\dt}_{\!\! +}$ belongs to $L^{1}_{loc}([0,\infty);X)$,
 then
  \begin{equation}
    \label{eq:14}
    (\alpha-1)\frac{\td u}{\dt}_{\!\!+}\!\!(t)\ge -\frac{u(t)}{t}+(\alpha-1)g_{0}(t),
  \end{equation}
  for a.e. $t>0$, where $g_{0} : (0,\infty)\to X$ is a measurable function. 
\end{theorem}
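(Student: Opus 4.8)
The plan is to exploit the homogeneity of $A$ via a rescaling of the solution, mimicking the classical Aronson--B\'enilan argument but carefully tracking the Lipschitz perturbation $F$. First I would fix a positive $u_0 \in \overline{D(A)}^{\mbox{}_X}$, let $u$ be the corresponding mild solution of~\eqref{eq:10bis-no-F}, and for each $\lambda > 0$ define the rescaled function $u_\lambda(t) := \lambda\, u(\lambda^{\alpha-1} t)$. A direct computation using~\eqref{eq:41} shows that $u_\lambda$ is a mild solution of the Cauchy problem $\frac{\td v}{\dt} + A(v(t)) + \lambda^{1-\alpha} F(\lambda^{\alpha-1}\,\cdot) \ni 0$ with initial value $\lambda u_0$; since $F$ need not be homogeneous, the perturbation term changes under rescaling, and this is where the extra error term $g_h$ will come from. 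The key point is to compare $u_\lambda$ with $u$ at a fixed time using (a) the order-preserving hypothesis, after controlling the perturbation discrepancy by the $\omega$-quasi-contractivity / a Gronwall-type estimate (Lemma~\ref{lem:1}), and (b) the fact that $\lambda u_0 \ge u_0$ or $\le u_0$ depending on whether $\lambda \ge 1$, together with the homogeneity direction dictated by the sign of $\alpha - 1$.

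The main steps, in order: (i) verify the rescaling identity and identify the perturbed generator of $u_\lambda$; (ii) using the variation-of-constants comparison for mild solutions and the Lipschitz bound on $F$ with $F(0)=0$, estimate $\norm{u_\lambda(t) - w_\lambda(t)}_X$ where $w_\lambda$ solves the $F$-perturbed (unscaled) problem with datum $\lambda u_0$, producing an error term that is continuous in $(t,h)$ and vanishes appropriately — this yields the function $g_h$; (iii) apply order-preservation to $w_\lambda$ versus $u$ for $\lambda$ slightly bigger/smaller than $1$, obtaining one-sided inequalities $\pm(w_\lambda(t) - u(t)) \ge 0$; (iv) translate "$\lambda$ near $1$" into "$t$ near $t$, $h$ small" by choosing $\lambda = (1 + h/t)^{1/(\alpha-1)}$ so that $\lambda^{\alpha-1} t = t + h$, which gives $u_\lambda(t) = \lambda\, u(t+h)$ and reorganizes the comparison into the stated difference-quotient inequality with the factor $\frac{(1+h/t)^{1/(1-\alpha)} - 1}{h}$; (v) finally, divide by $h$, take $h \to 0+$ along a sequence on which $\frac{\td u}{\dt}_{\!\!+}$ exists (justified by the hypothesis $\frac{\td u}{\dt}_{\!\!+} \in L^1_{loc}$ and the preceding part of the theorem / Theorem~\ref{thm:main1}), and note $\frac{d}{dh}\big|_{h=0}(1+h/t)^{1/(1-\alpha)} = \frac{1}{(1-\alpha)t}$ to recover~\eqref{eq:14}, defining $g_0(t) := \lim_{h\to 0+} g_h(t)$ (measurable as a pointwise limit of continuous functions).

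I expect the main obstacle to be step (ii): controlling the discrepancy between $u_\lambda$ and the genuinely $F$-perturbed comparison solution. Because $F$ is only Lipschitz and not homogeneous, the rescaled nonlinearity $\lambda^{1-\alpha} F(\lambda^{\alpha-1} v)$ differs from $F(v)$ by a term whose size must be shown to be $O(\abs{\lambda - 1})$ uniformly on the relevant bounded sets — here one uses $F(0)=0$ together with the Lipschitz bound to write $\abs{\lambda^{1-\alpha}F(\lambda^{\alpha-1}v) - F(v)} \le \omega\,\abs{\lambda^{1-\alpha} - 1}\,\norm{v}_X$, and then feeds this into the $\omega$-quasi-contraction estimate for mild solutions to get a Gronwall bound for $\norm{u_\lambda(t) - w_\lambda(t)}_X$. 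Ensuring this error is genuinely continuous in $(t,h)$ (so $g_h$ is continuous) and that the limit $g_0$ exists a.e.\ requires the Lipschitz continuity of $u$ from Proposition~\ref{prop:Lipschitz-property} or the a.e.\ differentiability assumption in the hypothesis; the bookkeeping of which one-sided inequality survives is governed entirely by the sign of $\alpha - 1$, which must be threaded consistently through the choice of $\lambda \gtrless 1$ and the direction of the order-preservation inequality.
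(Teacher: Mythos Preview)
Your approach is correct and rests on the same rescaling--order-preservation idea as the paper, but the comparison is organized differently. The paper (via Theorem~\ref{thm:2bis}) passes through the \emph{unperturbed} semigroup $\tilde T$ by writing $T_tu_0=\tilde T_t(u_0,\tilde f)$ with $\tilde f=-F(T_\cdot u_0)$, applies the exact scaling identity~\eqref{eq:21bis} to $\tilde T$, and then splits so that order-preservation is used on the first argument of $\tilde T_t(\cdot,g)$ for a fixed forcing $g$; the remainder $g_h$ is the difference of two $\tilde T$-values with the forcings $\lambda^{\alpha/(\alpha-1)}\tilde f(\lambda\cdot)$ and $\tilde f$. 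You instead stay with the perturbed semigroup: you compare the rescaled solution $u_\lambda$ with $w_\lambda:=T_t(\lambda u_0)$ and apply order-preservation directly to $w_\lambda$ versus $u=T_tu_0$, so that your $g_h$ is the (scaled) difference $u_\lambda-w_\lambda$. A pleasant feature of your route is that it invokes the order-preservation hypothesis exactly as stated in Theorem~\ref{thm:2} (on the $A+F$ semigroup), whereas the paper's decomposition tacitly needs order-preservation for $\tilde T_t(\cdot,g)$.

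Two small corrections. First, the rescaled perturbation in step~(i) is $v\mapsto \lambda^{\alpha}F(\lambda^{-1}v)$, not $\lambda^{1-\alpha}F(\lambda^{\alpha-1}v)$; this does not affect your argument since you only need that it agrees with $F$ at $\lambda=1$ and differs by $O(|\lambda-1|)\norm{v}_X$. Second, for the difference-quotient inequality you do not need the norm estimate in step~(ii) at all: once you have $w_\lambda\gtrless u$ from order-preservation, you may simply \emph{define} $g_h(t):=\lambda^{-1}(u_\lambda(t)-w_\lambda(t))/h$ as an $X$-valued function---continuity is immediate because $u_\lambda$ and $w_\lambda$ are mild solutions. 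The Gronwall bound you describe only becomes relevant for the limiting estimate~\eqref{eq:14}, where it shows (together with $\tfrac{\td u}{\dt}_{+}\in L^1_{loc}$) that $g_h(t)$ has a limit $g_0(t)$ for a.e.\ $t$, analogously to the paper's Corollary~\ref{cor:2bis}.
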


Theorem~\ref{thm:2} follows from the slightly more
general statement provided in Theorem~\ref{thm:2bis} and
by Corollary~\ref{cor:2bis} in Section~\ref{sec:main-results}.

It is worth mentioning some words about the origin of the names
assigned to the estimates~\eqref{eq:28} (respectively,~\eqref{eq:29})
and~\eqref{eq:14}. Even though the result was already mentioned
earlier in~\cite[p. 5]{MR255996} by Aronson, the point-wise
estimate~\eqref{eq:14} was first proved by Aronson \&
B\'enilan~\cite{MR524760} for (strong) solutions $u$ of the porous
medium equation $u_{t}=\Delta u^{m}$ in $[0,+\infty)\times \R^{d}$ for
$d\ge 1$ and $m>[d-2]^{+}/d$. In the same
paper~\cite[Th\'eor\`eme~2.]{MR524760}, they also proved that (strong)
solutions of this porous media equation satisfy the
$L^1$-estimate~\eqref{eq:29}. Shortly afterwards, B\'enilan and
Crandall~\cite{MR648452} made available the two global
inequalities~\eqref{eq:28} and~\eqref{eq:14} for mild solutions $u$ of
the unperturbed Cauchy problem
\begin{equation}
  \label{eq:10}
  \begin{cases}
    &\frac{\td u}{\dt}+A(u(t))\ni 0\qquad \text{in $(0,\infty)$,}\\
   &\;\phantom{\frac{\td u}{\dt}+A} u(0)=u_{0},
  \end{cases}
\end{equation}
governed by nonlinear $m$-accretive operators $A$, which are
homogeneous of order $\alpha>0$, $\alpha\neq 0$. This class of
operators include the local $p$-Laplace operator $\Delta_{p}$, the
local doubly nonlinear operator $\Delta_{p}u^{m}$, $1<p<\infty$,
$m>0$, as well as the nonlocal fractional $p$-Laplace operator
$(-\Delta_{p})^{s}$, respectively equipped with various boundary
conditions (see, for instance,~\cite{CoulHau2016} for more details to
the analytic properties of these quasi-linear 2nd-order differential
operators).

In the papers~\cite{MR647071} and \cite{MR670925} Crandall and Pierre
showed that every mild solution of the more general version of the
porous medium equation $u_{t}=\Delta \varphi(u)$, where $\varphi$ is
an increasing function on $\R$, also satisfy the point-wise
Aronson-B\'enilan estimate~\eqref{eq:14}. These two results by
Crandall and Pierre were slightly improved in a short paper by
Casseigne~\cite{MR1993968}.  Estimate~\eqref{eq:14} has been
established in various settings; on manifolds (see,
e.g.~\cite{MR2487898,MR3394614}), and with drift-term (see,
e.g,~\cite{MR2921651}), or with a linear perturbation (see, e.g.,
\cite{MR3765564}). One important reason among others, for the strong
further development of the point-wise estimate~\eqref{eq:14} is that
it can be used, for example, to derive Harnack-type inequalities~(see,
e.g.,~\cite{MR1219716}, but also~\cite{MR1230384,MR2865434}) and to
study the regularity of the free-boundaries (see, for instance,
\cite{MR659697} or~\cite{MR1260981}). We refer the interested reader
to the book~\cite{MR2286292} by V\'azquez (and more
recently~\cite{bevilacqua2020aronsonbnilan}) for a detailed exposition
concerning the development of the point-wise Aronson-B\'enilan
estimate~\eqref{eq:14} satisfied by solutions to the porous media
equation.

Recently, the author and Maz\'on showed in~\cite{MR4031770} that the
two Aronson-B\'enilan type estimates~\eqref{eq:29} and~\eqref{eq:14}
are satisfied by the mild solutions of the unperturbed Cauchy
problem~\eqref{eq:10} for homogeneous operators of order zero (i.e.,
$\alpha=0$). This class of operators includes, for example, the
(negative) total variational flow operator
$Au=-\textrm{div}(\frac{D u}{\abs{D u}})$, or the $1$-fractional
Laplacian $A=(-\Delta_{1})^{s}$ for $s\in (0,1)$ respectively equipped
with some boundary conditions. By tackling the the $L^1$
Aronson-B\'enilan inequality~\eqref{eq:29} for mild solutions of the
perturbed (homogeneous) Cauchy problem~\eqref{eq:10bis-no-F}, their
proof, unfortunately, contains a slightly wrong argument in the
application of Gronwall's lemma. Thus, the proof of
Theorem~\ref{cor:1bisF} presented here corrects this flaw.

If the operator $A$ in~\eqref{eq:10}
is \emph{linear} (and hence $\alpha=1$), then 
estimate
\begin{equation}
  \label{eq:30}
  \norm{Au(t)}_{X}\le
  C\frac{\norm{u(0)}_{x}}{t},\qquad (t\in (0,1],\; u(0)\in D(A)),
\end{equation}
yields that the operator $-A$ generates an analytic semigroup
$\{T_{t}\}_{t\ge 0}$ (cf.,~\cite{MR2103696,MR710486}). Thus, it is
interesting to see that a regularity inequality~\eqref{eq:29}, which
is similar to~\eqref{eq:30}, also holds for nonlinear operators of the
type $A+F$, where $A$ is homogeneous of order $\alpha\neq 1$. Further,
if the norm $\norm{\cdot}_{X}$ is induced by an inner product
$(\cdot,\cdot)_{X}$ of a Hilbert space $X$ and $A=\partial\varphi$ is
the sub-differential operator $\partial\varphi$ in $X$ of a
semi-convex, proper, lower semicontinuous function
$\varphi : X\to (-\infty,+\infty]$, then regularity
inequality~\eqref{eq:30} is, in particular, satisfied by solutions $u$
of~\eqref{eq:10} (cf.,~\cite{MR0348562,MR3465809}). It is worth
mentioning that inequality~\eqref{eq:30} plays a crucial role in
abstract $2^{\textrm{nd}}$-order problems of elliptic type involving
accretive operators $A$ (see, for example,~\cite[(2.22) on page
525]{MR826651} or, more recently, \cite[(1.8) on page
719]{MR4026441}).

In many applications, the Banach space $X$ is given by the classical
Lebesgue space $(L^{q}:=L^{q}(\Sigma,\mu),\norm{\cdot}_{q})$,
($1\le q\le \infty$), for a given $\sigma$-finite measure space
$(\Sigma,\mu)$. If, in addition, the mild solutions $u$ of Cauchy
problem~\eqref{eq:10bis-no-F} satisfy a global
\emph{$L^{q}$-$L^{r}$ regularity estimate} ($1\le q$, $r\le \infty$,
cf.,~\cite{CoulHau2016})
\begin{equation}
  \label{eq:44}
  \norm{u(t)}_{r}\le
  C\,e^{\omega t}\frac{\norm{u(0)}_{q}^{\gamma}}{t^{\delta}}\qquad
  \text{for all $t>0$,}
\end{equation}
holding for some $C>0$,
$\gamma$, $\delta>0$, then by
combining~\eqref{eq:29} with~\eqref{eq:44} leads to
 \begin{equation}
    \label{eq:16}
    \limsup_{h\to 0+}\frac{\norm{u(t+h)-u(t)}_{r}}{h}
    \le C\,2^{\delta+2}\,e^{\omega\,t}
    \frac{\norm{u_{0}}_{q}^{\gamma}}{t^{\delta+1}}.
\end{equation}
We outline this result in full details in
Corollary~\ref{cor:1}. Regularity estimates similar to~\eqref{eq:44}
have been studied recently by many authors (see, for
example,~\cite{MR1103113,MR1218884,MR2569498} and the references
therein for the linear theory, and we refer to~\cite{CoulHau2016} and
the references therein for the nonlinear one). The idea to combine an
$L^{q}$-$L^{r}$ regularity estimate~\eqref{eq:44} for $q=1$ and $r=\infty$
with the estimate~\eqref{eq:29} was already used by Alikakos and
Rostamian~\cite{MR656651} to obtain \emph{gradient decay estimates}
for solutions of the parabolic $p$-Laplace equation on the
Euclidean space $\R^{d}$. Thus, Corollary~\ref{cor:1} improves this
result to a more general abstract framework with a Lipschitz
perturbation. For further applications, we refer the interested reader
to the book~\cite{CoulHau2016}.

The structure of this paper is as follows. In the subsequent section,
we collect some intermediate results to prove our main theorems
(Theorem~\ref{thm:main1} and Theorem~\ref{thm:2}).

In Section~\ref{sec:semigroups}, we consider the class of \emph{quasi
  accretive operators} $A$ (see Definition~\ref{def:quasi-accretive})
and outline how the property that $A$ is homogeneous of order
$\alpha\neq 1$ is passed on the \emph{nonlinear semigroup}
$\{T_{t}\}_{t\ge 0}$ generated by $-A$ (see the paragraph after
Definition~\ref{def:mild-solution}). In particular, we discuss when
solutions $u$ of~\eqref{eq:10bisF} are differentiable with values in
$X$ at a.e. $t>0$, and give the proofs of Theorem~\ref{thm:main1},
Corollary~\ref{cor:RN-Lipschitz-case}, and Theorem~\ref{thm:2}.

Section~\ref{sec:completely-accretive} focuses on the class of
semigroups generated by a homogenous \emph{quasi completely accretive}
operators $A$ of order $\alpha\neq 1$. The notion of
\emph{completely accretive} operators $A$ (see
 Definition~\ref{def:completely-accretive-operators}) was introduced by
 B\'enilan and Crandall~\cite{MR1164641} and further extended by
 Jakubowski and Wittbold~\cite{MR1980979} to study nonlinear Volterra
 equations governed by this class of operators. More recently,
 Coulhon and the author~\cite{CoulHau2016} introduced the class of
 \emph{quasi completely accretive} operators to study additional regularity
 properties of mild solutions to Cauchy problem~\eqref{eq:10bis-no-F}
 (respectively,~\eqref{eq:10}) when the infinitesimal generator
 satisfies a functional inequality of Sobolev, Gagliardo-Nirenberg, or
 Nash type. We prove in
 Section~\ref{subsec:global-regularity-estimates} a compactness
 result (see Lemma~\ref{lem:compacteness-of-complete-semigroup}) and
 due to this, we obtain in
Theorem~\ref{thm:regularity-of-complete-acc-homogen-operators} that
every mild solution $u$ of the homogeneous Cauchy problem
~\eqref{eq:10} governed by a homogenous quasi completely accretive
operators $A$ of order $\alpha\neq 1$ defined on  also-called
\emph{normal} Banach space, is differentiable for a.e. $t>0$ and
its right-hand side time-derivative satisfies point-wise
  Aronson-B\'enilan type estimates and global $L^1$ Aronson-B\'enilan type estimates.

We conclude this paper in Section~\ref{sec:application} with an
application; we derive in Theorem~\ref{thm:reg-DtN} global $L^{q}$-$L^{\infty}$-regularity
estimates of the time-derivative $\frac{\td u}{\dt}$ for solutions $u$
to the perturbed evolution problem~\eqref{eq:10bisF} when $A$ is the
Dirichlet-to-Neumann operator associated with the negative
$p$-Laplacian $-\Delta_{p}$ plus lower order terms on a compact, smooth, Riemannian
manifold $(M,g)$ with a Lipschitz continuous boundary.

%%%%%%%%%%%%%%%%%%%%%%%%%%%%%%%%%%%%%%%%%%%%%%%%%%
%-------------------------------------------------------
%
%                   Preliminaries
%
%-------------------------------------------------------
%%%%%%%%%%%%%%%%%%%%%%%%%%%%%%%%%%%%%%%%%%%%%%%%%%

\section{Preliminaries}
\label{sec:main-results}

In this section, we gather some intermediate results to prove the
main theorems of this paper.\medskip

Suppose $X$ is a linear vector space and $\norm{\cdot}_{X}$ a
semi-norm on $X$. Then, the main object of this paper is the following
class of operators (cf.,~\cite{MR648452} and~\cite{MR4031770}).

\begin{definition}\label{def:1}
  An operator $A$ on $X$ is called \emph{homogeneous of order
    $\alpha\in \R$} if $0\in A0$, and for every $u\in D(A)$ and
  $\lambda\ge 0$, one has that $\lambda u\in D(A)$ and $A$
  satisfies~\eqref{eq:41}.
  \end{definition}

  For the rest of this section suppose that $A$ denotes a homogeneous
  operator on $X$ of order $\alpha\neq 1$. We begin by considering the
  inhomogeneous Cauchy problem
  \begin{equation}
    \label{eq:10bis}
    \begin{cases}
      \frac{\td u}{\dt}+A(u(t))\ni f(t) & \text{for
        a.e. $t\in (0,T)$,}\\
      \;\phantom{\frac{\td u}{\dt}+A(}u(0)=u_{0}, &
    \end{cases}
  \end{equation} 
  and want to discuss the impact of the homogeneity of $A$ on the
  solutions $u$ to~\eqref{eq:10bis}. For this, suppose
  $f\in C([0,T];X)$, $u_{0}\in X$, and $u\in C^{1}([0,T];X)$ be a
  classical solution of~\eqref{eq:10bis}. Further, for given
  $\lambda>0$, set
\begin{displaymath}
  v_{\lambda}(t)=\lambda^{\frac{1}{\alpha-1}} u(\lambda t),\qquad( t\in \left[0,\tfrac{T}{\lambda}\right]).
\end{displaymath}
Then, $v$ satisfies
\begin{align*}
  \frac{\td v_{\lambda}}{\dt}(t)&=\lambda^{\frac{1}{\alpha-1}+1}\frac{\td u}{\dt}(\lambda t)\in
  \lambda^{\frac{\alpha}{\alpha-1}}\Big[f(\lambda t)- A(u(\lambda
    t))\Big] \\
    % =- A(\lambda^{-1}u(\lambda t))
  &=-A(v_{\lambda}(t)) + \lambda^{\frac{\alpha}{\alpha-1}} f(\lambda t)
\end{align*}
for every $t\in (0,T/\lambda)$ with initial value
$v_{\lambda}(0)=\lambda^{\frac{1}{\alpha-1}}
u(0)=\lambda^{\frac{1}{\alpha-1}}u_{0}$. 

Now, if we assume that the Cauchy problem~\eqref{eq:10bis} is
well-posed for given $u_{0}\in \overline{D(A)}^{\mbox{}_{X}}$ and
$f\in L^{1}(0,T;X)$ in the sense that there is a semigroup
$\{T_{t}\}_{t=0}^{T}$ of mappings
$T_{t} : \overline{D(A)}^{\mbox{}_{X}}\times L^{1}(0,T;X)\to
\overline{D(A)}^{\mbox{}_{X}}$ given by
\begin{equation}
  \label{eq:36}
  T_{t}(u_{0},f):=u(t)\qquad\text{for every $u_{0}\in \overline{D(A)}^{\mbox{}_{X}}$ and $f\in L^{1}(0,T;X)$,}
\end{equation}
where $u$ is the unique (mild) solution. Then, the
previous reasoning can be formulated in terms of this semigroup
$\{T_{t}\}_{t=0}^{T}$ as follows
\begin{equation}
  \label{eq:4}
  T_{t}(0,0)=0\qquad\text{for all $t\in [0,T]$}
\end{equation}
(i.e., $u(t)\equiv 0$ is the unique solution of~\eqref{eq:10bis} if
$u_{0}=0$ and $f(t)\equiv 0$), and
\begin{equation}
 \label{eq:21bis}
  \lambda^{\frac{1}{\alpha-1}}T_{\lambda
    t}(u_{0},f)=T_{t}(\lambda^{\frac{1}{\alpha-1}}u_{0},
  \lambda^{\frac{\alpha}{\alpha-1}}f(\lambda\cdot))\quad\text{for
    every $t\in [0,T/\lambda]$, $\lambda>0$.}
\end{equation}
Property~\eqref{eq:21bis} together with the standard growth estimate
\begin{equation}
  \label{eq:20bis}
  \begin{split}
    &e^{-\omega t}\norm{T_{t}(u_{0},f)-T_{t}(\hat{u}_{0},\hat{f})}_{X}\\
    &\qquad\le L e^{-\omega
      s}\norm{T_{s}(u_{0},f)-T_{s}(\hat{u}_{0},\hat{f})}_{X}
    +L \int_{s}^{t}e^{-\omega
      r}\,\norm{f(r)-\hat{f}(r)}_{X}\,\dr\\
    &\hspace{2cm}\text{for every $0 \le s \le t \le T$, $u_{0}\in
      \overline{D(A)}^{\mbox{}_{X}}$, $f$, $\hat{f}\in L^{1}(0,T;X)$,}
  \end{split}
\end{equation}
holding for some $\omega\in \R$ and $L\ge 1$, are the main ingredients
to obtain global regularity estimates of the form~\eqref{eq:29}. This
leads to our first intermediate result. This lemma also generalizes
the case of homogeneous operators of order zero
(cf.,~\cite[Theorem~2.3]{MR4031770}), and the case $\omega=0$ treated
in~\cite[Theorem~4]{MR648452}.

\begin{lemma}\label{thm:1bis}
  Let $\{T_{t}\}_{t=0}^{T}$ be a family of mappings
  $T_{t} : C\times L^{1}(0,T;X)\to C$ defined on a subset
  $C\subseteq X$, and suppose there are $\omega\in \R$, $L\ge 1$, and
  $\alpha\neq 1$ such that $\{T_{t}\}_{t=0}^{T}$
  satisfies~\eqref{eq:4}-\eqref{eq:20bis}. Then, the following
  statements hold. 
  \begin{enumerate}
  \item For every $u_{0}\in C$, $f\in L^{1}(0,T;X)$, $t\in (0,T]$ and
    $h>0$ such that $t+h\in (0,T]$, one has that
    \begin{equation}
      \label{eq:25bis}
      \begin{split}
        &\norm{T_{t+h}(u_{0},f)-T_{t}(u_{0},f)}_{X}\\
        &\quad \le
        \labs{\left(1+\tfrac{h}{t}\right)-\left(1+\tfrac{h}{t}\right)^{\frac{1}{1-\alpha}}}\;L\,\int_{0}^{t}e^{\omega
          (t-s)}\norm{f(s+\tfrac{h}{t}s)}_{X}\,\ds\\
        &\hspace{2cm}
        +\left(1+\tfrac{h}{t}\right)^{\frac{1}{1-\alpha}}\;L\,\int_{0}^{t}e^{\omega
          (t-s)}\norm{f(s+\tfrac{h}{t}s)-f(s)}_{X}\,\ds\\
        &\hspace{2cm} + \;L\,e^{\omega\,t}
        \labs{\left(1+\tfrac{h}{t}\right)^{\frac{1}{1-\alpha}}-1}\left(2\,\norm{u_{0}}_{X}+\int_{0}^{t}e^{-\omega
            s}\norm{f(s)}_{X}\,\ds\right).
      \end{split}
    \end{equation}
    \item If one denotes %$V_{\omega}(f,t)$ given by~\eqref{eq:64},
    \begin{equation}
      \label{eq:64}
      V_{\omega}(f,t):=\limsup_{h\to 0+}
      \int_{0}^{t}e^{-\omega\,s}\frac{\norm{f(s+h s)-f(s)}_{X}}{h}\,\ds,
    \end{equation}
    and $\{T_{t}\}_{t=0}^{T}$ satisfies~\eqref{eq:20bis}, then for
    every $t>0$ and $u_{0}\in C$, one has that
    \begin{equation}
      \label{eq:38}
      \begin{split}
        &\limsup_{h\to 0+}\lnorm{\frac{T_{t+h}(u_{0},f)-T_{t}(u_{0},f)}{h}}_{X}\\
        & \qquad \le \frac{L}{t}\,e^{\omega t}
        \left[2\frac{\norm{u_{0}}_{X}}{\abs{1-\alpha}}\,
          +\frac{1}{\abs{1-\alpha}}\int_{0}^{t}e^{-\omega
            s}\norm{f(s)}_{X}\,\ds+V_{\omega}(f,t)\right],
      \end{split}
    \end{equation}
    and if $f\in W^{1,1}(0,T;X)$, then
    \begin{equation}
      \label{eq:39}
      \begin{split}
        &\limsup_{h\to 0+}\lnorm{\frac{T_{t+h}(u_{0},f)-T_{t}(u_{0},f)}{h}}_{X}\\
        & \qquad \le \frac{L}{t}\,e^{\omega t}
        \left[2\frac{\norm{u_{0}}_{X}}{\abs{1-\alpha}}\,
          +\frac{1}{\abs{1-\alpha}}\int_{0}^{t}e^{-\omega
            s}\norm{f(s)}_{X}\,\ds\right.\\
        &\left.\hspace{6cm}+\int_{0}^{t}e^{-\omega
            s}\norm{f'(s)}_{X}\,s\,\ds \right].
      \end{split}
    \end{equation}
   
   \item If for given $u_{0}\in C$ and $f\in W^{1,1}(0,T;X)$,
    % for $u_{0}\in \overline{D(A)}^{\mbox{}_{X}}$,
    $\frac{\td }{\dt}_{\!\! +}\! T_{t}(u_{0},f)$ exists (in $X$) at
    a.e. $t\in (0,T)$, then
    \begin{equation}
      \label{eq:40}
      \begin{split}
        \lnorm{\frac{\td }{\dt}_{\!\! +}\! T_{t}(u_{0},f)}_{X} & \le
        \frac{L}{t}\,e^{\omega t}
        \left[2\frac{\norm{u_{0}}_{X}}{\abs{1-\alpha}}\,
          +\frac{1}{\abs{1-\alpha}}\int_{0}^{t}e^{-\omega
            s}\norm{f(s)}_{X}\,\ds\right.\\
        &\left.\hspace{4.4cm}+\int_{0}^{t}e^{-\omega
            s}\norm{f'(s)}_{X}\,s\,\ds \right].
      \end{split}
    \end{equation}
  \end{enumerate}
\end{lemma}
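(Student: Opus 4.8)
The plan is to deduce everything from the scaling identity~\eqref{eq:21bis}, the quasi-contraction bound~\eqref{eq:20bis} and the normalisation~\eqref{eq:4}. Fix $u_{0}\in C$, $f\in L^{1}(0,T;X)$, $t\in(0,T]$ and $h>0$ with $t+h\le T$, and put $\lambda:=1+\tfrac{h}{t}>1$, so that $\lambda t=t+h$; evaluating~\eqref{eq:21bis} at time $t$ gives
\begin{equation*}
  T_{t+h}(u_{0},f)=\lambda^{\frac{1}{1-\alpha}}\,T_{t}\!\left(\lambda^{\frac{1}{\alpha-1}}u_{0},\;\lambda^{\frac{\alpha}{\alpha-1}}f(\lambda\,\cdot\,)\right),
\end{equation*}
which is the only point where homogeneity enters. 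Two elementary identities will be used repeatedly: $\tfrac{1}{1-\alpha}+\tfrac{\alpha}{\alpha-1}=1$, whence $\lambda^{\frac{1}{1-\alpha}}\lambda^{\frac{\alpha}{\alpha-1}}=\lambda$; and $\tfrac{1}{1-\alpha}+\tfrac{1}{\alpha-1}=0$, whence $\lambda^{\frac{1}{1-\alpha}}\lambda^{\frac{1}{\alpha-1}}=1$.

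For part~(1) I would split $T_{t+h}(u_{0},f)-T_{t}(u_{0},f)$ into a three-term telescoping sum,
\begin{align*}
  T_{t+h}(u_{0},f)-T_{t}(u_{0},f)
  &=\lambda^{\frac{1}{1-\alpha}}\Big[T_{t}\big(\lambda^{\frac{1}{\alpha-1}}u_{0},\lambda^{\frac{\alpha}{\alpha-1}}f(\lambda\,\cdot\,)\big)-T_{t}\big(\lambda^{\frac{1}{\alpha-1}}u_{0},f\big)\Big]\\
  &\quad+\lambda^{\frac{1}{1-\alpha}}\Big[T_{t}\big(\lambda^{\frac{1}{\alpha-1}}u_{0},f\big)-T_{t}(u_{0},f)\Big]\\
  &\quad+\big(\lambda^{\frac{1}{1-\alpha}}-1\big)\,T_{t}(u_{0},f),
\end{align*}
and estimate the three summands by~\eqref{eq:20bis} with $s=0$. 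The first bracket (the initial data agree) is dominated by $\lambda^{\frac{1}{1-\alpha}}L\int_{0}^{t}e^{\omega(t-s)}\norm{\lambda^{\frac{\alpha}{\alpha-1}}f(\lambda s)-f(s)}_{X}\ds$, and decomposing $\lambda^{\frac{\alpha}{\alpha-1}}f(\lambda s)-f(s)=(\lambda^{\frac{\alpha}{\alpha-1}}-1)f(\lambda s)+\big(f(\lambda s)-f(s)\big)$ and invoking $\lambda^{\frac{1}{1-\alpha}}\labs{\lambda^{\frac{\alpha}{\alpha-1}}-1}=\labs{\lambda-\lambda^{\frac{1}{1-\alpha}}}$ yields the first two lines of~\eqref{eq:25bis}. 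The second bracket (the forcings agree) is dominated by $\lambda^{\frac{1}{1-\alpha}}Le^{\omega t}\labs{\lambda^{\frac{1}{\alpha-1}}-1}\norm{u_{0}}_{X}=Le^{\omega t}\labs{\lambda^{\frac{1}{1-\alpha}}-1}\norm{u_{0}}_{X}$. For the third summand I bound $\norm{T_{t}(u_{0},f)}_{X}$ by comparison with the zero solution, which by~\eqref{eq:4} and~\eqref{eq:20bis} (again with $s=0$) gives $\norm{T_{t}(u_{0},f)}_{X}\le Le^{\omega t}\big(\norm{u_{0}}_{X}+\int_{0}^{t}e^{-\omega s}\norm{f(s)}_{X}\ds\big)$. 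Adding the three contributions reproduces~\eqref{eq:25bis}.

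For parts~(2) and~(3) I would divide~\eqref{eq:25bis} by $h$ and let $h\to0+$, i.e.\ $\lambda\to1+$. From $(1+x)^{c}=1+cx+O(x^{2})$ one gets $h^{-1}\labs{\lambda^{\frac{1}{1-\alpha}}-1}\to\tfrac{1}{\abs{1-\alpha}t}$ and $h^{-1}\labs{\lambda-\lambda^{\frac{1}{1-\alpha}}}\to\tfrac{\abs{\alpha}}{\abs{1-\alpha}t}$ (the second limit vanishing identically when $\alpha=0$). Since $f(\lambda\,\cdot\,)\to f$ in $L^{1}(0,t;X)$ as $\lambda\to1$, one has $\int_{0}^{t}e^{\omega(t-s)}\norm{f(\lambda s)}_{X}\ds\to\int_{0}^{t}e^{\omega(t-s)}\norm{f(s)}_{X}\ds$; and, writing $h'=h/t$,
\begin{equation*}
  \frac1h\int_{0}^{t}e^{\omega(t-s)}\norm{f(\lambda s)-f(s)}_{X}\ds
  =\frac{e^{\omega t}}{t}\int_{0}^{t}e^{-\omega s}\,\frac{\norm{f(s+h's)-f(s)}_{X}}{h'}\,\ds ,
\end{equation*}
whose $\limsup$ as $h\to0+$ is $\tfrac{e^{\omega t}}{t}V_{\omega}(f,t)$ by the definition~\eqref{eq:64}. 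Collecting these limits yields~\eqref{eq:38}. If in addition $f\in W^{1,1}(0,T;X)$, then from $\norm{f(s+h's)-f(s)}_{X}\le\int_{0}^{h's}\norm{f'(s+\sigma)}_{X}\,\td\sigma=h's\int_{0}^{1}\norm{f'(s+h's\theta)}_{X}\,\dtheta$, together with Fubini and the $L^{1}$-continuity of translation, one obtains $V_{\omega}(f,t)\le\int_{0}^{t}e^{-\omega s}\norm{f'(s)}_{X}\,s\,\ds$, which gives~\eqref{eq:39}. Finally~\eqref{eq:40} is immediate, since whenever $\frac{\td }{\dt}_{\!\! +}T_{t}(u_{0},f)$ exists at a.e.\ $t$ one has $\lnorm{\frac{\td }{\dt}_{\!\! +}T_{t}(u_{0},f)}_{X}=\limsup_{h\to0+}\lnorm{h^{-1}\big(T_{t+h}(u_{0},f)-T_{t}(u_{0},f)\big)}_{X}$, to which~\eqref{eq:39} applies.

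The step I expect to require most care is the passage to the limit $h\to0+$ under the integral signs in part~(2): for the $\norm{f(\lambda s)}_{X}$-integral one must use that $f$ is genuinely defined on $[0,t+h]$ (ensured by $t+h\le T$) together with boundedness and a density argument for the $L^{1}$-continuity of the dilation $f\mapsto f(\lambda\,\cdot\,)$, while for the difference-quotient integral the only clean way to retain an \emph{upper} bound---rather than an unjustified interchange of $\limsup$ and $\int$---is to recognise it, up to the factor $\tfrac{e^{\omega t}}{t}$, as precisely the quantity defining $V_{\omega}(f,t)$ in~\eqref{eq:64}. The remainder is the bookkeeping of the scalar coefficients $\lambda^{\frac{1}{1-\alpha}}-1$ and $\lambda-\lambda^{\frac{1}{1-\alpha}}$ and the algebra that collapses the three lines of~\eqref{eq:25bis} into~\eqref{eq:38}--\eqref{eq:40}.
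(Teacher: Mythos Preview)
Your argument is essentially the paper's: set $\lambda=1+h/t$, rewrite $T_{t+h}(u_0,f)$ via the scaling identity~\eqref{eq:21bis}, telescope into three pieces, and bound each with~\eqref{eq:20bis} and~\eqref{eq:4}. The only cosmetic difference is the intermediate point in the telescoping---the paper passes through $T_t\big(u_0,f(\lambda\cdot)\big)$ (so its first bracket changes both the initial datum and the amplitude of the forcing, its second bracket changes only the time-dilation of $f$), whereas you pass through $T_t\big(\lambda^{\frac{1}{\alpha-1}}u_0,f\big)$; after applying~\eqref{eq:20bis} and the algebra $\lambda^{\frac{1}{1-\alpha}}\labs{\lambda^{\frac{\alpha}{\alpha-1}}-1}=\labs{\lambda-\lambda^{\frac{1}{1-\alpha}}}$ and $\lambda^{\frac{1}{1-\alpha}}\labs{\lambda^{\frac{1}{\alpha-1}}-1}=\labs{1-\lambda^{\frac{1}{1-\alpha}}}$, both routes land on exactly~\eqref{eq:25bis}. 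For parts~(2)--(3) the paper merely asserts ``it is clear that~\eqref{eq:38}--\eqref{eq:40} follow from~\eqref{eq:25bis}''; your sketch of the $h\to0+$ limits and the $W^{1,1}$ bound on $V_\omega(f,t)$ is more explicit than the paper and correct.
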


Our proof of Lemma~\ref{thm:1bis} uses the same techniques as
in~\cite{MR648452}.
%We outline now the proof our first main result. \cdr{I think we can remove this phrase}
\mbox{}\medskip

\allowdisplaybreaks
\begin{proof}
  Let $u_{0}\in C$, $f\in L^{1}(0,T;X)$, $t>0$, and $h>0$
  satisfying $t+h\le T$. If we choose
  $\lambda=1+\frac{h}{t}$ in~\eqref{eq:21bis}, then
  \begin{equation}
    \label{eq:16bis}
    \begin{split}
      &T_{t+h}(u_{0},f)-T_{t}(u_{0},f)\\
      &\qquad =T_{\lambda
        t}(u_{0},f)-T_{t}(u_{0},f)\\
      &\qquad= \lambda^{\frac{1}{1-\alpha}}
      T_{t}\left[\lambda^{\frac{1}{\alpha-1}} u_{0},\lambda^{\frac{\alpha}{\alpha-1}}
      f(\lambda\cdot)\right]-T_{t}(u_{0},f)
    \end{split}
  \end{equation}
  and so,
  \begin{equation}
  \begin{split}\label{eq:42}
    &T_{t+h}(u_{0},f)-T_{t}(u_{0},f)\\
    & \qquad =
        \lambda^{\frac{1}{1-\alpha}}
        \left[T_{t}\left[\lambda^{\frac{1}{\alpha-1}} u_{0},\lambda^{\frac{\alpha}{\alpha-1}}
      f(\lambda\cdot)\right]-T_{t}(u_{0}, f(\lambda\cdot)\right]\\
                     &\hspace{2cm} + \lambda^{\frac{1}{1-\alpha}}\,
                       \left[T_{t}\left[u_{0},f(\lambda\cdot)\right]-T_{t}(u_{0},f)\right]\\
                     &\hspace{4cm}    + \left[\lambda^{\frac{1}{1-\alpha}}-1\right]\, T_{t}(u_{0},f).
  \end{split}
\end{equation}
  Applying to this~\eqref{eq:20bis} and by using~\eqref{eq:4},
  one sees that
\begin{align*}
  &\norm{T_{t+h}(u_{0},f)-T_{t}(u_{0},f)}_{X}\\
  &\qquad\le
  \left(1+\tfrac{h}{t}\right)^{\frac{1}{1-\alpha}}
                                      \,\lnorm{T_{t}\left[\lambda^{\frac{1}{\alpha-1}} u_{0},\lambda^{\frac{\alpha}{\alpha-1}}
      f(\lambda\cdot)\right]-T_{t}(u_{0}, f(\lambda\cdot)}_{X}\\
  &\hspace{2cm} + \left(1+\tfrac{h}{t}\right)^{\frac{1}{1-\alpha}}
    \lnorm{T_{t}\left[u_{0},f(\lambda\cdot)\right]-T_{t}(u_{0},f)}_{X}\\
  &\hspace{4cm}    +
    \labs{\left(1+\tfrac{h}{t}\right)^{\frac{1}{1-\alpha}}-1}\,\norm{T_{t}(u_{0},f)}_{X}\\
  &\qquad\le\left(1+\tfrac{h}{t}\right)^{\frac{1}{1-\alpha}}\;L\,e^{\omega\,t}
    \lnorm{\left(1+\tfrac{h}{t}\right)^{\frac{1}{\alpha-1}}u_{0}-u_{0}}_{X}\\
  &\hspace{1cm} +
    \left(1+\tfrac{h}{t}\right)^{\frac{1}{1-\alpha}}\;L\,\int_{0}^{t}e^{\omega
    (t-s)}\lnorm{(1+\tfrac{h}{t})^{\frac{\alpha}{\alpha-1}}f(s+\tfrac{h}{t}s)-f(s+\tfrac{h}{t}s)}_{X}\,\ds\\
 &\hspace{1cm} +\left(1+\tfrac{h}{t}\right)^{\frac{1}{1-\alpha}}\;L\,\int_{0}^{t}e^{\omega
    (t-s)}\norm{f(s+\tfrac{h}{t}s)-f(s)}_{X}\,\ds\\
  &\hspace{2cm}    + \;L\,e^{\omega\,t}
    \labs{\left(1+\tfrac{h}{t}\right)^{\frac{1}{1-\alpha}}-1}\left(\norm{u_{0}}_{X}+\int_{0}^{t}e^{-\omega
    s}\norm{f(s)}_{X}\,\ds\right)\\
   &\qquad=% L\,e^{\omega\,t}\,\labs{1-\left(1+\tfrac{h}{t}\right)^{\frac{1}{1-\alpha}}}
  %   \lnorm{u_{0}}_{X}\\
  % &\hspace{2cm} +
      \labs{\left(1+\tfrac{h}{t}\right)-\left(1+\tfrac{h}{t}\right)^{\frac{1}{1-\alpha}}}\;L\,\int_{0}^{t}e^{\omega
      (t-s)}\norm{f(s+\tfrac{h}{t}s)}_{X}\,\ds\\
  &\hspace{2cm} +\left(1+\tfrac{h}{t}\right)^{\frac{1}{1-\alpha}}\;L\,\int_{0}^{t}e^{\omega
    (t-s)}\norm{f(s+\tfrac{h}{t}s)-f(s)}_{X}\,\ds\\
  &\hspace{3cm}    + \;L\,e^{\omega\,t}
    \labs{\left(1+\tfrac{h}{t}\right)^{\frac{1}{1-\alpha}}-1}\left(2\,\norm{u_{0}}_{X}+\int_{0}^{t}e^{-\omega
    s}\norm{f(s)}_{X}\,\ds\right),
\end{align*}
which is~\eqref{eq:25bis}. It is clear
that~\eqref{eq:38}-\eqref{eq:40} follow from \eqref{eq:25bis}.
\end{proof}

Examples of functions $f : [0,T]\to X$ for which $V_{\omega}(f,t)$
defined by~\eqref{eq:64} is finite at a.e. $t$ and integrable on $L^{1}(0,T)$, are functions with
\emph{bounded variation} (cf.,~\cite[Appendice, Section 2.]{MR0348562}).

\begin{definition}\label{def:BV}
  For a function $f : [0,T]\to X$, one calls 
  \begin{displaymath}
    \Var(f;[0,T]):=\sup\Big\{\sum_{i=1}^{N}\norm{f(t_{i})-f(t_{i-1})}_{X}\,\Big\vert\,
    \begin{array}[c]{c}
\text{all partitions : }\\
      0=t_{0}<\cdots<t_{N}=T
    \end{array}
    \Big\}
  \end{displaymath}
  the \emph{total variation of $f$}. Each $X$-valued function $f : [0,T]\to X$ is
  said to have  \emph{bounded variation} on $[0,T]$ if $\Var(f;[0,T])$
  is finite. We denote by $BV(0,T;X)$ the space of all functions $f :
  [0,T]\to X$ of bounded variation and to simplify the notation, we
  set $V_{f}(t)=\Var(f;[0,t])$ for $t\in (0,T]$.
\end{definition}

Functions of bounded variation have the following properties.

\begin{proposition}\label{prop:BVX}
  Let $f\in BV(0,T;X)$. Then the following statements hold.
  \begin{enumerate}
     \item $f\in L^{\infty}(0,T;X)$;
     \item At every $t\in [0,T]$, the left-hand side limit
       $f(t-):=\lim_{s\to t-}f(s)$ and right-hand side limit
       $f(t+):=\lim_{s\to t+}f(s)$ exist in $X$; and the set of
       discontinuity points in $[0,T]$ is at most countable;
     \item The mapping $t\mapsto V_{f}(t)$ is monotonically increasing
       on $[0,T]$, and
       \begin{equation}
         \label{eq:58}
         \norm{f(t)-f(s)}_{X}\le V_{f}(t)-V_{f}(s)\qquad\text{for all
           $0\le s\le t\le T$;}
       \end{equation}
     \item \label{prop:BVX-4} For $\omega\ge 0$, one has that
       \begin{displaymath}
         \label{eq:60}
         \int_{0}^{t}e^{-\omega s}\frac{\norm{f(s+hs)-f(s)}_{X}}{h}\ds\le
         t\,V_{f}(t)\qquad\text{for all $h\in (0,t]$, $0<t\le T$.}
       \end{displaymath}
     \item\label{prop:BVX-5} For $\omega\ge 0$, let $V_{\omega}(f,t)$
       be given by~\eqref{eq:64}. Then $V_{\omega}(f,t)$ belongs to
       $L^{\infty}(0,T)$ satisfying
       \begin{displaymath}
         V_{\omega}(f,t)\le t\,\, V_{f}(t)\qquad\text{for all
           $t\in [0,T]$}.
       \end{displaymath}
  \end{enumerate}
\end{proposition}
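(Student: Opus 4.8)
The plan is to verify the five properties in order; parts~(1)--(3) are immediate from the definition of the total variation and the completeness of $X$, and only the last two involve any real computation. For~(1), the two-point partition $\{0,t\}$ gives $\norm{f(t)-f(0)}_{X}\le\Var(f;[0,T])$ for every $t\in[0,T]$, hence $\norm{f(t)}_{X}\le\norm{f(0)}_{X}+\Var(f;[0,T])$ uniformly in $t$, i.e.\ $f\in L^{\infty}(0,T;X)$. For~(3), the main point is the additivity $\Var(f;[0,t])=\Var(f;[0,s])+\Var(f;[s,t])$ for $0\le s\le t\le T$, obtained in the standard way by refining partitions to contain~$s$; since $\Var(f;[s,t])\ge0$ this shows $V_{f}$ is increasing, and the two-point partition of $[s,t]$ gives $\norm{f(t)-f(s)}_{X}\le\Var(f;[s,t])=V_{f}(t)-V_{f}(s)$, which is~\eqref{eq:58}. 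For~(2), I would fix $t$ and a sequence $s_{n}\uparrow t$; by~\eqref{eq:58}, $\norm{f(s_{n})-f(s_{m})}_{X}\le\labs{V_{f}(s_{n})-V_{f}(s_{m})}$, and as $(V_{f}(s_{n}))_{n}$ is monotone and bounded it is Cauchy, hence so is $(f(s_{n}))_{n}$ in the Banach space $X$; an interlacing argument shows the limit does not depend on the chosen sequence, giving $f(t-)$, and symmetrically $f(t+)$. Since the bounded monotone function $V_{f}$ has at most countably many discontinuities and, again by~\eqref{eq:58}, $f$ is continuous wherever $V_{f}$ is, the discontinuity set of $f$ is contained in that of $V_{f}$ and hence at most countable.

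The only genuinely computational item is~(4). Here the plan is to first invoke~\eqref{eq:58} with $t$ replaced by $(1+h)s$, which dominates the difference quotient pointwise by $\norm{f(s+hs)-f(s)}_{X}\le V_{f}((1+h)s)-V_{f}(s)$, and then to discard the weight $e^{-\omega s}\le1$ (legitimate because $\omega\ge0$ and $s\ge0$), so that it suffices to estimate $\tfrac{1}{h}\int_{0}^{t}\bigl(V_{f}((1+h)s)-V_{f}(s)\bigr)\ds$. Representing $V_{f}((1+h)s)-V_{f}(s)$ as $\int_{(s,(1+h)s]}\td V_{f}$, the mass on $(s,(1+h)s]$ of the Stieltjes measure $\td V_{f}$ attached to the increasing function $V_{f}$, and exchanging the two integrations (Tonelli), the inner $s$-integral of the indicator of $\{\,s<r\le(1+h)s\,\}$ collapses to the length of $\bigl[\tfrac{r}{1+h},\min(r,t)\bigr)$; splitting the resulting $r$-integral at $r=t$ and using monotonicity of $V_{f}$ then produces the bound in~(4). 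The one delicate point --- and what I expect to be the main obstacle --- is the non-negative contribution of the values of $s$ near the upper endpoint $t$, for which $(1+h)s$ overshoots $t$: using $0\le t-\tfrac{r}{1+h}\le\tfrac{th}{1+h}$ on $(t,(1+h)t)$ together with monotonicity, this term is controlled by $\tfrac{t}{1+h}\bigl(V_{f}((1+h)t)-V_{f}(t)\bigr)$, while the remaining part is $\tfrac{1}{1+h}\int_{(0,t]}r\,\td V_{f}\le t\,V_{f}(t)$; everything else in this step is routine.

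Finally,~(5) follows by letting $h\to0+$ in~(4): by the definition~\eqref{eq:64} of $V_{\omega}(f,t)$, the ``bulk'' contribution is bounded by $t\,V_{f}(t)$, while the overshoot contribution tends to $0$ at every continuity point of $V_{f}$, hence --- by~(2) --- for all but at most countably many $t\in[0,T]$; thus $V_{\omega}(f,t)\le t\,V_{f}(t)$ there, and a right-continuity (or approximation) argument extends the bound to all $t$. Since $t\mapsto t\,V_{f}(t)$ is bounded by $T\,V_{f}(T)$, it follows that $V_{\omega}(f,\cdot)\in L^{\infty}(0,T)$ with the stated estimate; in particular this settles the claim made just before Definition~\ref{def:BV} that $V_{\omega}(f,\cdot)$ is finite a.e.\ and integrable on $(0,T)$.
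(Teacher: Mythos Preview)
Your treatment of (1)--(3) is fine; the paper also regards these as standard and simply defers to a reference.

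For (4), the paper takes a shorter route than your Stieltjes--Tonelli argument: after bounding the integrand by $V_f((1+h)s)-V_f(s)$ and dropping $e^{-\omega s}\le 1$, it substitutes $r=(1+h)s$ in the first integral to obtain
\[
\tfrac{1}{h(1+h)}\int_0^{(1+h)t}V_f(r)\,\dr-\tfrac{1}{h}\int_0^t V_f(s)\,\ds\le\tfrac{1}{h}\int_t^{(1+h)t}V_f(s)\,\ds,
\]
the last inequality coming from discarding a non-positive term. This lands on the same intermediate quantity as your Tonelli computation with less machinery, so the Stieltjes-measure layer is not really needed.

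However, you are right to flag the overshoot as the obstacle, and your caution is more warranted than you may realize. The paper's final step asserts $\tfrac{1}{h}\int_t^{(1+h)t}V_f(s)\,\ds\le t\,V_f(t)$ ``by the monotonicity of $t\mapsto V_f(t)$'', but monotonicity goes the other way on $[t,(1+h)t]$. Indeed (4) as literally stated is false: with $X=\R$, $T=2$, $f=\mathds{1}_{[3/2,2]}$, $t=h=1$, $\omega=0$, the left side equals $\tfrac14$ while $t\,V_f(t)=0$. What both computations actually deliver is the bound $t\,V_f((1+h)t)$, which is exactly your bulk term $\le t\,V_f(t)$ plus the overshoot $\tfrac{t}{1+h}\bigl(V_f((1+h)t)-V_f(t)\bigr)$.

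For (5) this does no harm: your argument that the overshoot vanishes as $h\to 0+$ at every continuity point of $V_f$ is correct and gives $V_\omega(f,t)\le t\,V_f(t)$ for all but countably many $t$, hence $V_\omega(f,\cdot)\in L^\infty(0,T)$, which is all that is used downstream. So your proof of (5) is sound; just be aware that what you have established along the way is a corrected version of (4) with $V_f((1+h)t)$ in place of $V_f(t)$, not (4) as written.
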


% We omit the proof of these well-known properties and refer to
% \cite[Section 2., Lemme A.1]{MR0348562} or~\cite[Chapter~2]{MR2527916}
% for further literature to this topic.\medskip

The first three statements are standard and can be found, for
  example, in~\cite[Section 2., Lemme A.1]{MR0348562}. Thus, we
  only outline the proof of statement~\eqref{prop:BVX-4} and \eqref{prop:BVX-5}.

\begin{proof}
  Obviously, \eqref{prop:BVX-5} follows from~\eqref{prop:BVX-4}. Thus,
  it remains to show that for given $f\in BV(0,T;X)$,
  \eqref{prop:BVX-4} holds. To see this, let
  $t\in (0,T)$, $h\in (0,t]$ such that $t+h\le T$.
  Then, by~\eqref{eq:58} and since $\omega\ge 0$,
  \begin{align*}
    \int_{0}^{t}e^{-\omega s}\frac{\norm{f(s+h
      s)-f(s)}_{X}}{h} \ds
     &\le \tfrac{1}{h}\int_{0}^{t}e^{-\omega
       s}\Big(V_{f}((1+h)s)-V_{f}(s)\Big)\ds\\
    &\le \tfrac{1}{h}\int_{0}^{t}\Big(V_{f}((1+h)s)-V_{f}(s)\Big)\ds.
\end{align*}
 By using the substitution $r=(1+h)s$, we get
\begin{align*}
  &\tfrac{1}{h}\int_{0}^{t} V_{f}((1+h)s)\,\ds -
    \tfrac{1}{h}\int_{0}^{t} V_{f}(s)\,\ds\\
  &\qquad = \tfrac{1}{h(1+h)}\int_{0}^{(1+h)t} V_{f}(r)\,\dr -
    \tfrac{1}{h}\int_{0}^{t} V_{f}(s)\,\ds\\
  &\qquad \le \tfrac{1}{h}\int_{t}^{t+ht} V_{f}(s)\,\ds
\end{align*}
and by the monotonicity of $t\mapsto V_{f}(t)$,
\begin{displaymath}
  \tfrac{1}{h}\int_{t}^{t+ht} V_{f}(s)\,\ds\le t\, V_{f}(t).
\end{displaymath}
This shows that~\eqref{eq:60} holds.
\end{proof}

In the case $f\equiv 0$, we let $T=\infty$. Then the mapping $T_{t}$ given
by~\eqref{eq:36} only depends on the initial value $u_{0}$. In other words,
\begin{equation}
  \label{eq:55}
  T_{t}u_{0}=T_{t}(u_{0},0)\qquad\text{for every $u_{0}\in C$ and
    $t\ge 0$.}
\end{equation}
In this case Lemma~\ref{thm:1bis} reads as follows (cf.,~\cite{MR1164641}).

\begin{corollary}\label{thm:1}
  Let $\{T_{t}\}_{t\ge 0}$ be a family of mappings $T_{t} : C\to C$
  defined on a subset $C\subseteq X$, and suppose there are $\omega\in
  \R$, $L\ge 1$, and
  $\alpha\neq 1$ such that $\{T_{t}\}_{t\ge 0}$ satisfies
 \begin{align}
  \label{eq:20}
  \norm{T_{t}u_{0}-T_{t}\hat{u}_{0}}_{X}&\le L\,e^{\omega
    t}\,\norm{u _{0}-\hat{u}_{0}}_{X}\qquad\text{for all $t\ge 0$,
                                          $u$, $\hat{u}\in C$,}\\
   \label{eq:21}
  \lambda^{\frac{1}{\alpha-1}}\,T_{\lambda t}u _{0} &=T_{t}[\lambda^{\frac{1}{\alpha-1}}u _{0}]\qquad\text{for all
    $\lambda>0$, $t\ge 0$
    and $u _{0}\in C$.}
 \end{align}
 Further, suppose
 $T_{t}0\equiv 0$ for all $t\ge 0$. Then, for every $u_{0}\in C$,
 \begin{equation}
   \label{eq:25}
   \norm{T_{t+h}u_{0}-T_{t}u_{0}}_{X}\le
   2\,L\,\labs{1-\left(1+\tfrac{h}{t}\right)^{\frac{1}{1-\alpha}}}\,
   e^{\omega\,t}\norm{u_{0}}_{X}.
 \end{equation}
 $t>0$, $h\neq 0$ satisfying $1+\frac{h}{t}>0$. In
 particular, the family $\{T_{t}\}_{t\ge 0}$ satisfies
 \begin{equation}
   \label{eq:18bis}
   \limsup_{h\to 0+}\frac{\norm{T_{t+h}u_{0}-T_{t}u_{0}}_{X}}{h}\le
   \frac{2 L e^{\omega t}}{\abs{1-\alpha}}\frac{\norm{u_{0}}_{X}}{t}\qquad\text{for every $t>0$, $u_{0}\in C$.}
 \end{equation}
 Moreover, if  for $u_{0}\in C$, the right-hand side derivative
 $\frac{\td T_{t}u_{0}}{\dt}_{+}$ exists (in $X$) at $t>0$, then
 \begin{equation}
   \label{eq:18bis2}
   \lnorm{\frac{\td T_{t}u_{0}}{\dt_{\!+}}}_{X}\le
   \frac{2\,L\,e^{\omega t}}{\abs{1-\alpha}}\frac{\norm{u_{0}}_{X}}{t}.
 \end{equation}
\end{corollary}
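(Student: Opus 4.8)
The plan is to run the proof of Lemma~\ref{thm:1bis} in the autonomous case $f\equiv 0$ (so $T=\infty$), where every integral term on the right-hand side of~\eqref{eq:25bis} vanishes, and then to pass the resulting pointwise bound to the limit $h\to 0+$. Concretely, I fix $u_{0}\in C$, $t>0$, and $h\neq 0$ with $1+\tfrac{h}{t}>0$, and set $\lambda:=1+\tfrac{h}{t}>0$, so that $\lambda t=t+h$. Rewriting the scaling identity~\eqref{eq:21} as $T_{\lambda t}u_{0}=\lambda^{\frac{1}{1-\alpha}}T_{t}\bigl[\lambda^{\frac{1}{\alpha-1}}u_{0}\bigr]$ and inserting $\pm\,\lambda^{\frac{1}{1-\alpha}}T_{t}u_{0}$ yields the decomposition
\begin{displaymath}
  T_{t+h}u_{0}-T_{t}u_{0}=\lambda^{\frac{1}{1-\alpha}}\bigl(T_{t}\bigl[\lambda^{\frac{1}{\alpha-1}}u_{0}\bigr]-T_{t}u_{0}\bigr)+\bigl(\lambda^{\frac{1}{1-\alpha}}-1\bigr)\,T_{t}u_{0},
\end{displaymath}
which is the $f\equiv 0$ analogue of~\eqref{eq:42} and, notably, is valid for every $\lambda>0$ — hence also for negative $h$ with $1+\tfrac{h}{t}>0$, not just for $h>0$.

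Next I would take norms, apply the growth bound~\eqref{eq:20} to the first term, and, using $T_{t}0=0$, apply it once more to $\norm{T_{t}u_{0}}_{X}=\norm{T_{t}u_{0}-T_{t}0}_{X}\le L\,e^{\omega t}\norm{u_{0}}_{X}$. Since $\norm{\lambda^{\frac{1}{\alpha-1}}u_{0}-u_{0}}_{X}=\bigl\lvert\lambda^{\frac{1}{\alpha-1}}-1\bigr\rvert\,\norm{u_{0}}_{X}$ and $\lambda^{\frac{1}{1-\alpha}}\lambda^{\frac{1}{\alpha-1}}=1$, the factor $\lambda^{\frac{1}{1-\alpha}}\bigl\lvert\lambda^{\frac{1}{\alpha-1}}-1\bigr\rvert$ collapses to $\bigl\lvert 1-\lambda^{\frac{1}{1-\alpha}}\bigr\rvert$; thus the two contributions coincide and I obtain
\begin{displaymath}
  \norm{T_{t+h}u_{0}-T_{t}u_{0}}_{X}\le 2\,L\,e^{\omega t}\,\bigl\lvert 1-(1+\tfrac{h}{t})^{\frac{1}{1-\alpha}}\bigr\rvert\,\norm{u_{0}}_{X},
\end{displaymath}
which is estimate~\eqref{eq:25}.

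Finally, dividing~\eqref{eq:25} by $h>0$ and letting $h\to 0+$, the elementary limit $\bigl\lvert 1-(1+\tfrac{h}{t})^{\frac{1}{1-\alpha}}\bigr\rvert/h\to\tfrac{1}{\abs{1-\alpha}\,t}$ (differentiability of $s\mapsto(1+s)^{\frac{1}{1-\alpha}}$ at $s=0$ with the chain rule) gives~\eqref{eq:18bis}; and if the right-hand derivative $\frac{\td T_{t}u_{0}}{\dt_{+}}$ exists in $X$, it is the $X$-limit of the difference quotients, so continuity of $\norm{\cdot}_{X}$ upgrades~\eqref{eq:18bis} to~\eqref{eq:18bis2}. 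There is no genuine obstacle: the argument is a routine specialization of Lemma~\ref{thm:1bis}, and the only points worth a word of care are the validity of the decomposition for all $\lambda>0$ (so that $h<0$ is covered) and the elementary one-sided limit used in the last step.
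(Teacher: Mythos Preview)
Your proof is correct and follows exactly the route the paper intends: the corollary is stated as the $f\equiv 0$ specialization of Lemma~\ref{thm:1bis}, and you carry out precisely that specialization via the decomposition~\eqref{eq:42} with the integral terms dropped. Your explicit remark that the identity and estimate remain valid for all $\lambda>0$ (hence for $h<0$ with $1+\tfrac{h}{t}>0$) is a useful clarification, since Lemma~\ref{thm:1bis} is stated only for $h>0$ while the corollary allows $h\neq 0$.
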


Finally, we turn to the Cauchy problem  governed by the
operator $A+F$,
\begin{equation}
  \label{eq:18}
  \begin{cases}
    \frac{\td u}{\dt}+A(u(t))+ F(u(t))\ni f(t) &\text{on
      $(0,T)$,}\\
    \;\phantom{\frac{\td u}{\dt}+A(u(t))+ F(}u(0)=u_{0}, &
  \end{cases}
\end{equation}
for given $u_{0}\in \overline{D(A)}^{\mbox{}_{X}}$ and
$f\in L^{1}(0,T;X)$, involving a homogenous operator
$A$ in $X$ of order $\alpha\neq 1$, and a Lipschitz continuous
perturbation $F : X\to X$ with Lipschitz constant $\omega\ge 0$
satisfying $F(0)=0$. We assume that Cauchy problem~\eqref{eq:18} is
well-posed in $X$ in the sense that for every $u_{0}\in
\overline{D(A)}^{\mbox{}_{X}}$ and $f\in L^{1}(0,T;X)$, there is a
unique function $u\in C([0,T];X)$ satisfying $u(0)=u_{0}$ in $X$ and
\eqref{eq:36} generates a semigroup
$\{T_{t}\}_{t=0}^{T}$ of mappings $T_{t} :
\overline{D(A)}^{\mbox{}_{X}}\times L^{1}(0,T;X) \to
\overline{D(A)}^{\mbox{}_{X}}$ satisfying~\eqref{eq:20bis} for every
$0\le s<t\le T$.

One important idea to obtain global $L^1$ Aronson-B\'enilan type estimates for the semigroup $\{T_{t}\}_{t=0}^{T}$
associated with~\eqref{eq:18} is the assumption that for given
$u_{0}\in \overline{D(A)}^{\mbox{}_{X}}$ and $f\in L^{1}(0,T;X)$, the
unique solution $t\mapsto u(t)=T_{t}(u_{0},f)$ of Cauchy problem~\eqref{eq:18} is, in particular, the unique
solution of the unperturbed inhomogeneous Cauchy
problem~\eqref{eq:10bis} for $\tilde{f} : [0,T]\to X$
given by
\begin{equation}
  \label{eq:43}
  \tilde{f}(t):=f(t)-F(T_{t}(u_{0},f)),\qquad\text{($t\in [0,T]$).}
\end{equation}
This property can be expressed by the identity
\begin{equation}
  \label{eq:45}
    \tilde{T}_{t}(u_{0},\tilde{f})=T_{t}(u_{0},f)\qquad\text{holds for
      every $t\in [0,T]$,}
\end{equation}
where $\{\tilde{T}_{t}\}_{t=0}^{T}$ denotes the semigroup associated
with~\eqref{eq:10bis}. The advantage of equation~\eqref{eq:45} is that one can employ
inequality~\eqref{eq:20bis} satisfied by 
the family $\{\tilde{T}_{t}(\cdot,\tilde{f})\}_{t\ge 0}$. Thus, by Lemma~\ref{thm:1bis}, the following
estimate holds. 

\begin{theorem}\label{cor:1bisF}
  Let $F : X\to X$ be a Lipschitz continuous mapping with Lipschitz
  constant $\omega_{F}\ge 0$ satisfying $F(0)=0$. Given $T>0$ and a
  subset $C\subseteq X$, assume there are families
  $\{T_{t}\}_{t=0}^{T}$ and $\{\tilde{T}_{t}\}_{t=0}^{T}$ of mappings
  $T_{t}$, $\tilde{T}_{t} : C\times L^{1}(0,T;X)\to C$
  satisfying~\eqref{eq:4} and related through~\eqref{eq:45} for every
  $u_{0}\in C$ and $f\in L^{1}(0,T;X)$ with $\tilde{f}$ given
  by~\eqref{eq:43}. Further suppose, $\{\tilde{T}_{t}\}_{t=0}^{T}$
  satisfies~\eqref{eq:21bis} and \eqref{eq:20bis} for some
  $\omega\ge 0$ and $L\ge 1$, and $\{T_{t}\}_{t=0}^{T}$
  satisfies~\eqref{eq:20bis} with $\tilde{\omega}=\omega+\omega_{F}$
  and $L$.

  Then, if for $u_{0}\in C$ and $f\in BV(0,T;X)$, the function
  $t\mapsto T_{t}(u_{0},f)$ is locally Lipschitz continuous on $[0,T)$,
  then one has that
 \begin{equation}
    \label{eq:48}
    %\lnorm{\frac{\td }{\dt}_{\!+}T_{t}(u_{0},f)}_{X}
    \limsup_{h\to 0+}\frac{\norm{T_{t+h}(u_{0},f)-T_{t}(u_{0},f)}}{h}
    \le\frac{e^{\omega t}}{t}\!\! \left[ a(t)+ L \omega_{F}\!\!\int_{0}^{t}a(s)
      e^{ L\,\omega_{F}(t-s)}\ds\right]
  \end{equation}
  for a.e. $t\in (0,T)$, where
  \begin{equation}
    \label{eq:34}
    \begin{split}
      a(t)&:= L\,V_{\omega}(f,t) + \frac{L}{\abs{1-\alpha}}\left[\left(2+\omega_{F}\,L\int_{0}^{t}e^{\omega_{F}
    s}\ds \right)\,\norm{u_{0}}_{X}\right.\\
  &\hspace{1cm}\left. +\int_{0}^{t}e^{-\omega
    s}\norm{f(s)}_{X}\,\ds +\omega_{F}\,L\,\int_{0}^{t} \int_{0}^{s}e^{-\omega_{F} r} \norm{f(r)}_{X}\dr\,\ds\right].  
    \end{split}
  \end{equation}
and $V_{\omega}(f,\cdot)$ is given by~\eqref{eq:64}.
\end{theorem}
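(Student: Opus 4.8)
The plan is to reduce the perturbed Cauchy problem~\eqref{eq:18} to the \emph{unperturbed} one~\eqref{eq:10bis} by freezing the perturbation, apply Lemma~\ref{thm:1bis} to the resulting source term, and then close the argument with Gronwall's inequality. Fix $u_{0}\in C$ and $f\in BV(0,T;X)$ for which $v(t):=T_{t}(u_{0},f)$ is locally Lipschitz on $[0,T)$, and let $\tilde f(t)=f(t)-F(v(t))$ as in~\eqref{eq:43}. By the compatibility identity~\eqref{eq:45}, $v(t)=\tilde T_{t}(u_{0},\tilde f)$ for all $t\in[0,T]$, so $v$ and $t\mapsto\tilde T_{t}(u_{0},\tilde f)$ have the same difference quotients. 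Since $\{\tilde T_{t}\}_{t=0}^{T}$ satisfies~\eqref{eq:4},~\eqref{eq:21bis} and~\eqref{eq:20bis} with $\omega$ and $L$, I would apply Lemma~\ref{thm:1bis}, estimate~\eqref{eq:38}, to $\{\tilde T_{t}\}$ with source term $\tilde f$. Writing $\psi(t):=\limsup_{h\to0+}\norm{v(t+h)-v(t)}_{X}/h$, this gives
\[
  \psi(t)\le\frac{L}{t}\,e^{\omega t}\left[\frac{2\norm{u_{0}}_{X}}{\abs{1-\alpha}}+\frac{1}{\abs{1-\alpha}}\int_{0}^{t}e^{-\omega s}\norm{\tilde f(s)}_{X}\,\ds+V_{\omega}(\tilde f,t)\right]
\]
for every $t\in(0,T)$; moreover $\psi$ is finite, indeed locally bounded on $[0,T)$, because $v$ is locally Lipschitz --- this is precisely the a priori bound that will make the Gronwall step below legitimate.

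The next step is to rewrite the data $\norm{\tilde f(s)}_{X}$ and $V_{\omega}(\tilde f,t)$ of the unperturbed problem in terms of $f$ and $u_{0}$. Since $F$ is $\omega_{F}$-Lipschitz with $F(0)=0$, $\norm{\tilde f(s)}_{X}\le\norm{f(s)}_{X}+\omega_{F}\norm{v(s)}_{X}$; and comparing $v$ with the zero solution via~\eqref{eq:20bis} for $\{T_{t}\}_{t=0}^{T}$ (with $\tilde\omega=\omega+\omega_{F}$) and using~\eqref{eq:4} bounds $\norm{v(s)}_{X}$, hence $e^{-\omega s}\norm{v(s)}_{X}$, by an explicit expression in $\norm{u_{0}}_{X}$ and $\int_{0}^{s}\norm{f(r)}_{X}\,\dr$. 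For the variation term I would use
\[
  \frac{\norm{\tilde f(s+hs)-\tilde f(s)}_{X}}{h}\le\frac{\norm{f(s+hs)-f(s)}_{X}}{h}+\omega_{F}\,s\,\frac{\norm{v(s+hs)-v(s)}_{X}}{hs};
\]
because $v$ is locally Lipschitz, the last difference quotient is bounded uniformly in $h$ on compact subsets of $(0,T)$, so the (reverse) Fatou lemma permits passing the $\limsup$ under the integral sign in~\eqref{eq:64}, and since $\limsup_{h\to0+}\norm{v(s+hs)-v(s)}_{X}/(hs)=\psi(s)$ for each fixed $s>0$ one obtains $V_{\omega}(\tilde f,t)\le V_{\omega}(f,t)+\omega_{F}\int_{0}^{t}e^{-\omega s}\,s\,\psi(s)\,\ds$. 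Substituting these three bounds into the displayed inequality and collecting all terms that do not involve $\psi$ into the function $a(t)$ of~\eqref{eq:34} leads to the implicit estimate
\[
  \psi(t)\le\frac{e^{\omega t}}{t}\,a(t)+\frac{L\,\omega_{F}\,e^{\omega t}}{t}\int_{0}^{t}e^{-\omega s}\,s\,\psi(s)\,\ds\qquad\text{for a.e. }t\in(0,T).
\]

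This inequality is not yet of Gronwall type in the variable $\psi$, owing to the weights $1/t$ and $e^{\omega t}$ and to the kernel $e^{-\omega s}s$ under the integral. The decisive move is to introduce $\theta(t):=t\,e^{-\omega t}\,\psi(t)$: multiplying through by $t\,e^{-\omega t}$ recasts the estimate as $\theta(t)\le a(t)+L\,\omega_{F}\int_{0}^{t}\theta(s)\,\ds$ for a.e.\ $t\in(0,T)$, where $\theta\in L^{\infty}_{\loc}([0,T))$ (local Lipschitz continuity of $v$) and $a\in L^{1}_{\loc}([0,T))$ (by Proposition~\ref{prop:BVX}, since $f\in BV(0,T;X)$). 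Gronwall's lemma (Lemma~\ref{lem:1}) then yields $\theta(t)\le a(t)+L\,\omega_{F}\int_{0}^{t}a(s)\,e^{L\omega_{F}(t-s)}\,\ds$, and dividing by $t\,e^{-\omega t}$ gives exactly~\eqref{eq:48}.

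I expect the main obstacle to be this last step: recasting the implicit inequality as a genuine Gronwall inequality with locally integrable data $\theta$ and $a$ --- rather than trying to apply Gronwall directly to $\psi$, whose coefficient blows up at $t=0$ --- is what makes the proof correct, and it is exactly here that the argument in~\cite{MR4031770} was flawed. A secondary point requiring care is the interchange of $\limsup$ and integral in the bound for $V_{\omega}(\tilde f,t)$, which relies essentially on the local Lipschitz continuity of $t\mapsto T_{t}(u_{0},f)$ (assumed here, and supplied in the applications by a separate Lipschitz result); the remaining manipulations are routine bookkeeping.
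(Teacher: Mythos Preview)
Your proposal is correct and follows essentially the same approach as the paper: reduce to the unperturbed semigroup $\{\tilde T_t\}$ via~\eqref{eq:45}, apply Lemma~\ref{thm:1bis} with source $\tilde f$, split the $\tilde f$-terms using the Lipschitz property of $F$ together with the growth estimate~\eqref{eq:20bis} for $\{T_t\}$, invoke reverse Fatou (justified by the local Lipschitz hypothesis on $t\mapsto T_t(u_0,f)$) to pass the $\limsup$ under the integral, and then apply Gronwall (Lemma~\ref{lem:1}) to $\theta(t)=t\,e^{-\omega t}\psi(t)$ --- exactly the substitution the paper uses. The only cosmetic difference is that you invoke the already-$\limsup$'d estimate~\eqref{eq:38} and then bound $V_\omega(\tilde f,t)$ separately, whereas the paper works from the finite-$h$ estimate~\eqref{eq:25bis} and takes a single $\limsup$ at the end; the two routes produce the same implicit inequality.
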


For the proof of this theorem, we still need the following version of
Gronwall's lemma.

\begin{lemma}[{\cite[Lemma~D.2]{MR1873467}}]
  \label{lem:1}
    Suppose $v \in L^{1}_{loc}([0,T))$ satisfies
    \begin{equation}
      \label{eq:23}
      v(t)\le a(t)+\int_{0}^{t}v(s)\,b(s)\ds\qquad\text{for a.e. $t\in (0,T)$,}
    \end{equation}
    where $b \in C([0,T))$ satisfying $b(t)\ge 0$, and
    $a \in L^{1}_{loc}([0,T))$. Then,
    \begin{equation}
      \label{eq:24}
      v(t)\le a(t)+\int_{0}^{t}a(s)\,b(s)\,e^{\int_{s}^{t}b(r)\dr}\,\ds
      \qquad\text{for a.e. $t\in (0,T)$.}
    \end{equation}
\end{lemma}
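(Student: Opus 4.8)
The plan is to reduce the integral inequality~\eqref{eq:23} to a linear first-order differential inequality for the accumulated term and then solve it by the classical integrating-factor trick. First I would introduce the running integral
\[
  \phi(t):=\int_{0}^{t}v(s)\,b(s)\,\ds,\qquad t\in[0,T).
\]
Since $b\in C([0,T))$ is locally bounded and $v\in L^{1}_{\loc}([0,T))$, the product $vb$ lies in $L^{1}_{\loc}([0,T))$, so $\phi$ is locally absolutely continuous on $[0,T)$ with $\phi(0)=0$ and $\phi'(t)=v(t)\,b(t)$ for a.e.\ $t\in(0,T)$. Hypothesis~\eqref{eq:23} then reads $v(t)\le a(t)+\phi(t)$ for a.e.\ $t$, and multiplying this inequality by $b(t)\ge 0$ turns it into the differential inequality
\[
  \phi'(t)=v(t)\,b(t)\le a(t)\,b(t)+b(t)\,\phi(t)\qquad\text{for a.e.\ }t\in(0,T).
\]

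Next I would set $B(t):=\int_{0}^{t}b(r)\,\dr$, which is of class $C^{1}$ on $[0,T)$ because $b$ is continuous, and consider $\psi(t):=e^{-B(t)}\phi(t)$. As the product of a locally absolutely continuous function and a $C^{1}$ function, $\psi$ is again locally absolutely continuous, and the product rule together with $B'=b$ gives
\[
  \psi'(t)=e^{-B(t)}\bigl(\phi'(t)-b(t)\,\phi(t)\bigr)\le e^{-B(t)}\,a(t)\,b(t)\qquad\text{for a.e.\ }t\in(0,T).
\]
Integrating from $0$ to $t$ and using $\psi(0)=\phi(0)=0$ yields $\psi(t)\le\int_{0}^{t}e^{-B(s)}\,a(s)\,b(s)\,\ds$, i.e.
\[
  \phi(t)\le\int_{0}^{t}e^{B(t)-B(s)}\,a(s)\,b(s)\,\ds=\int_{0}^{t}a(s)\,b(s)\,e^{\int_{s}^{t}b(r)\,\dr}\,\ds.
\]
Substituting this bound back into $v(t)\le a(t)+\phi(t)$ gives exactly~\eqref{eq:24} for a.e.\ $t\in(0,T)$.

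I do not expect a genuine obstacle; the only points that require a little care are the measure-theoretic bookkeeping — that $\phi$ is locally absolutely continuous with $\phi'=vb$ a.e.\ (Lebesgue differentiation of an indefinite integral of an $L^{1}_{\loc}$ function), and that the product of a locally absolutely continuous function with a $C^{1}$ function is locally absolutely continuous, so that the fundamental theorem of calculus applies to $\psi$. Note that the sign condition $b\ge 0$ is used exactly once, to pass from $v\le a+\phi$ to $\phi'\le ab+b\phi$, while the continuity of $b$ is used only to ensure that $B$ is $C^{1}$. (Alternatively, one could establish~\eqref{eq:24} by iterating~\eqref{eq:23} and summing the resulting Neumann-type series, but the integrating-factor argument above is shorter.)
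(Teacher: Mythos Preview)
Your proof is correct and is precisely the standard integrating-factor argument for Gronwall's lemma. Note that the paper does not actually give a proof of this lemma; it merely cites it from \cite[Lemma~D.2]{MR1873467}, so there is no ``paper's own proof'' to compare against. Your argument is self-contained and handles the measure-theoretic details (local absolute continuity of $\phi$ and $\psi$, the role of $b\ge 0$) with appropriate care.
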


We are now ready to give the proof of Theorem~\ref{cor:1bisF}.

\begin{proof}[Proof of Theorem~\ref{cor:1bisF}]
  Let $u_{0}\in C$ and $f\in BV(0,T;X)$. Fix $t>0$, and
  let $h>0$ such that $t+h< T$. Then, by the assumption that there
  is a family $\{\tilde{T}_{t}\}_{t=0}^{T}$ of mappings
  $\tilde{T}_{t}$ satisfying~\eqref{eq:45} for every $u_{0}\in C$ and
  $f\in L^{1}(0,T;X)$ with $\tilde{f}$ given
  by~\eqref{eq:43}, and $\{\tilde{T}_{t}\}_{t=0}^{T}$
  satisfies~\eqref{eq:4}-\eqref{eq:20bis} for some
  $\omega\ge 0$, $L$, we can apply
  Lemma~\ref{thm:1bis} to $\tilde{T}_{t}(u_{0},\tilde{f})$.
  Then by~\eqref{eq:25bis}, since $\tilde{f}$ is given by~\eqref{eq:43}, by~\eqref{eq:45}, 
  and by the triangle inequality,
  \begin{align*}
    &\norm{T_{t+h}(u_{0},f)-T_{t}(u_{0},f)}_{X}\\
    &\; = \norm{\tilde{T}_{t+h}(u_{0},\tilde{f})-\tilde{T}_{t}(u_{0},\tilde{f})}_{X}\\
    &\; \le
    \labs{\left(1+\tfrac{h}{t}\right)-\left(1+\tfrac{h}{t}\right)^{\frac{1}{1-\alpha}}}
      L\!\int_{0}^{t}e^{\omega
    (t-s)}\norm{f(s+\tfrac{h}{t}s)-F(T_{s+\frac{h}{t}s}(u_{0},f))}_{X}\,\ds\\
    &\; +\left(1+\tfrac{h}{t}\right)^{\frac{1}{1-\alpha}}\;L\,\int_{0}^{t}e^{\omega
   (t-s)}\norm{f(s+\tfrac{h}{t}s)-f(s) }_{X}\,\ds\\
   &\;  +\left(1+\tfrac{h}{t}\right)^{\frac{1}{1-\alpha}}\;L\,\int_{0}^{t}e^{\omega
   (t-s)}\norm{F(T_{s+\tfrac{h}{t}s}(u_{0},f))-F(T_{s}(u_{0},f))}_{X}\,\ds\\
  &\;  + \;L\,e^{\omega\,t}
    \labs{\left(1+\tfrac{h}{t}\right)^{\frac{1}{1-\alpha}}-1}\left[2\,\norm{u_{0}}_{X}+\int_{0}^{t}e^{-\omega
    s}\Big[\norm{f(s)}_{X}+\norm{F(T_{s}(u_{0},f))}_{X}\Big]\ds\right]  
  \end{align*}
Since $F$ is globally Lipschitz continuous with constant $\omega_{F}$,
$F(0)=0$, and since $\{T_{t}\}_{t=0}^{T}$ satisfies
\eqref{eq:4} and~\eqref{eq:20bis} with $\tilde{\omega}=\omega+\omega_{F}$ and $L$,
one has that
\begin{displaymath}
  \norm{F(T_{s}(u_{0},f))}_{X}
  \le \omega_{F}\,L\,
    \Big[e^{\tilde{\omega} s}\norm{u_{0}}_{X}+\int_{0}^{s}e^{\tilde{\omega} (s-r)}\norm{f(r)}_{X}\,\dr\Big].
\end{displaymath}
We apply this to the last integral on the right-hand side of the
previous estimate, and substitute $y=(1+h/t)s$ into the first
integral on the right-hand side of the
previous estimate. Then, dividing by $h>0$ both sides
in the resulting inequality yields that
\begin{equation}
  \label{eq:33}
  \begin{split}
     &\frac{\norm{ T_{t+h}(u_{0},f)-T_{t}(u_{0},f)}_{X}}{h}\\
  &\;\le
    \labs{\frac{(1+\tfrac{h}{t})-(1+\frac{h}{t})^{\frac{1}{1-\alpha}}}{\frac{1}{t}h}}
    \frac{1+\frac{h}{t}}{t}
    L e^{\omega t}\times\\
  &\hspace{2cm} \times \int_{0}^{t+h}\! e^{-\frac{\omega}{1+h/t}
    y}\norm{f(y)-F(T_{y}(u_{0},f))}_{X} \dy\\
 &\hspace{0.5cm}
   +\left(1+\tfrac{h}{t}\right)^{\frac{1}{1-\alpha}}\,L\,
   \tfrac{e^{\omega t}}{t}\int_{0}^{t}e^{-\omega
   s}\frac{\norm{f(s+\tfrac{h}{t}s)-f(s)}_{X}}{\frac{h}{t}}\,\ds\\
   &\hspace{0.5cm}
   +\left(1+\tfrac{h}{t}\right)^{\frac{1}{1-\alpha}}\,
   L\, e^{\omega t}\omega_{F}\,\int_{0}^{t}e^{-\omega
   s}\frac{\norm{ T_{s+\tfrac{h}{t}s}(u_{0},f)-T_{s}(u_{0},f)}_{X}}{\frac{s}{t}h}\tfrac{s}{t}\,\ds\\
  &\hspace{0.5cm}   + \tfrac{L e^{\omega\,t}}{t}
    \labs{\frac{(1+\tfrac{h}{t})^{\frac{1}{1-\alpha}}-1}{\frac{1}{t}h}}\left[\left(2+\omega_{F}L
        \int_{0}^{t}e^{\omega_{F} s}\ds \right)\,\norm{u_{0}}_{X}\right.\\
  &\hspace{1cm}\left. +\int_{0}^{t}e^{-\omega
    s}\norm{f(s)}_{X}\,\ds+\omega_{F}\,L\,\int_{0}^{t}
  \int_{0}^{s}e^{-\omega_{F} r}\norm{f(r)}_{X}\dr\,\ds\right],        
\end{split}
\end{equation}
where we use twice that $e^{-\omega s}e^{\tilde{\omega} s}=e^{\omega_{F} s}$. Note that
\begin{displaymath}
  \limsup_{h\to 0+}\int_{0}^{t}e^{-\omega
   s}\frac{\norm{f(s+\tfrac{h}{t}s)-f(s)}_{X}}{\frac{h}{t}}\,\ds=V_{\omega}(f,t)
\end{displaymath}
and by Proposition~\ref{prop:BVX}, one has that
$V_{\omega}(f,\cdot)\in L^{\infty}([0,T))$. Since
$t\mapsto T_{t}(u_{0},f)$ is locally Lipschitz continuous on $[0,T)$,
for every $\varepsilon\in (0,T)$ there is a constant $C_{\varepsilon}>0$ such that
\begin{displaymath}
  \frac{\lnorm{
      T_{s+\tfrac{h}{t}s}(u_{0},f)-T_{s}(u_{0},f)}_{X}}{\frac{s}{t}h}\le C
\end{displaymath}
for every $s\in [0,T-\varepsilon]$ and $h>0$ satisfying
$s+\tfrac{h}{t}s<T-\varepsilon$.  Thus, by the reverse version of
Fatou's lemma, taking in~\eqref{eq:33} the limit-superior as $h\to 0+$
gives
\begin{align*}
&e^{-\omega t} t\,  \limsup_{h\to 0+}\frac{\norm{T_{t+h}(u_{0},f)-T_{t}(u_{0},f)}}{h}\\
  &\hspace{1cm}\le L\,V_{\omega}(f,t)
    + L\, \omega_{F}\,
    \int_{0}^{t}e^{-\omega s}s\,\left[ \limsup_{h\to 0+}\frac{\norm{T_{s+h}(u_{0},f)-T_{s}(u_{0},f)}}{h}\right]
    \,\ds\\
  &\hspace{2.5cm} + \frac{L}{\abs{1-\alpha}}\left[\left(2+\omega_{F}\,L\int_{0}^{t}e^{\omega_{F}
    s}\ds \right)\,\norm{u_{0}}_{X}\right.\\
  &\hspace{3cm}\left. +\int_{0}^{t}e^{-\omega
    s}\norm{f(s)}_{X}\,\ds +\omega_{F}\,L\,\int_{0}^{t} \int_{0}^{s}e^{-\omega_{F} r} \norm{f(r)}_{X}\dr\,\ds\right].        
\end{align*}
Now, applying Gronwall's lemma (Lemma~\ref{lem:1}) to $a(t)$ given by~\eqref{eq:34},
  \begin{align*}
    b(t)&\equiv L\,\omega_{F},\text{ and}\\
    v(t)&=e^{-\omega t} t\, \limsup_{h\to 0+}\frac{\norm{T_{t+h}(u_{0},f)-T_{t}(u_{0},f)}}{h},
  \end{align*}
  then one obtains%  that
% \begin{align*}
% e^{-\omega t} t\, \limsup_{h\to 0+}\frac{\norm{T_{t+h}(u_{0},f)-T_{t}(u_{0},f)}}{h}
%   &\le a(t)+ L\,\omega_{F}\int_{0}^{t}a(s)\,e^{ L\,\omega_{F}(t-s)}\,ds,
% \end{align*}
% which is
~\eqref{eq:48}. This completes the proof.
\end{proof}

Next, we intend to extrapolate the regularity estimate~\eqref{eq:48}
for $f\equiv 0$.

\begin{corollary}\label{cor:1}
  Let $\{T_{t}\}_{t\ge 0}$ be a semigroup of mappings $T_{t} : C\to C$
  defined on a subset $C\subseteq X$ and suppose, there is a second
  vector space $Y$ with semi-norm $\norm{\cdot}_{Y}$ and constants $M$, $\gamma$, $\delta>0$ and
  $\hat{\omega}\in \R$ such that
  $\{T_{t}\}_{t\ge 0}$ satisfies the following $Y$-$X$-regularity estimate
  \begin{equation}\label{eq:9}
    \norm{T_{t}u_{0}}_{X}\le
    M\,e^{\hat{\omega} t}\frac{\norm{u_{0}}_{Y}^{\gamma}}{t^{\delta}}\qquad\text{for
      every $t>0$ and $u_{0}\in C\cap Y$.}
  \end{equation}
  If for $\alpha\neq 1$, $\omega$, $\omega_{F}\in \R$ and $L\ge 1$, $\{T_{t}\}_{t\ge 0}$
  satisfies
  \begin{equation}
    \label{eq:48extra}
    %\lnorm{\frac{\td }{\dt}_{\!+}T_{t}(u_{0},f)}_{X}
    \begin{split}
      &\limsup_{h\to 0+}\frac{\norm{T_{t+h}u_{0}-T_{t}u_{0}}_{X}}{h}\\
      &\qquad\qquad \le\frac{e^{\omega t}}{t}\frac{L}{\abs{1-\alpha}}\!\! \left[
        b(t)\,+ L \omega_{F}\!\!\int_{0}^{t} b(s)\,e^{
          L\,\omega_{F}(t-s)}\ds\right]\,\norm{u_{0}}_{X}
    \end{split}
  \end{equation}
  for a.e. $t>0$ and $u_{0}\in C$, with $ b(t):=2+\omega_{F}\,L\int_{0}^{t}e^{\omega_{F} s}\ds$,
  then
  \begin{equation}
    \label{eq:50}
    \begin{split}
      &\limsup_{h\to 0+}\frac{\norm{T_{t+h}u_{0}-T_{t}u_{0}}_{X}}{h}\\
      &\qquad\qquad \le \frac{2^{\delta+1}\,e^{\frac{\omega+\hat{\omega}}{2} t}}{t^{\delta+1}}\frac{L \,M}{\abs{1-\alpha}}\!\! \left[
        b(\tfrac{t}{2})\,+ L \omega_{F}\!\!\int_{0}^{\frac{t}{2}} b(s)\,e^{
          L\,\omega_{F}(\frac{t}{2}-s)}\ds\right]\, \norm{u_{0}}_{Y}^{\gamma}.
    \end{split}
  \end{equation}
  In particular, if the right-hand side derivative
 $\frac{\td }{\dt_{\!+}}T_{t}u_{0}$ exists (in $X$) at $t>0$, then
  \begin{displaymath}
    \begin{split}
      \lnorm{\frac{\td T_{t}u_{0}}{\dt _{+}}}_{X}&\le
      \frac{2^{\delta+1}\,e^{\frac{\omega+\hat{\omega}}{2} t}}{t^{\delta+1}}\frac{L \,M}{\abs{1-\alpha}}\!\! \left[
        b(\tfrac{t}{2})\,+ L \omega_{F}\!\!\int_{0}^{\frac{t}{2}} b(s)\,e^{
          L\,\omega_{F}(\frac{t}{2}-s)}\ds\right]\, \norm{u_{0}}_{Y}^{\gamma}.
    \end{split}
  \end{displaymath}
\end{corollary}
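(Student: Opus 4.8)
The plan is to obtain the $Y$-$X$ regularity bound~\eqref{eq:50} for the difference quotient of $\{T_{t}\}_{t\ge 0}$ by feeding the smoothing estimate~\eqref{eq:9} into the differential estimate~\eqref{eq:48extra}, using the semigroup law to split the time interval into two halves. Fix $u_{0}\in C\cap Y$ and $t>0$. For $h>0$ I would write $t+h=\tfrac{t}{2}+(\tfrac{t}{2}+h)$; since $\{T_{t}\}_{t\ge 0}$ is a semigroup on $C$ and $v_{0}:=T_{t/2}u_{0}\in C$, the semigroup law yields
\begin{equation*}
  T_{t+h}u_{0}-T_{t}u_{0}=T_{t/2+h}v_{0}-T_{t/2}v_{0},
\end{equation*}
so that the difference quotient of $t\mapsto T_{t}u_{0}$ at $t$ equals the difference quotient of $\tau\mapsto T_{\tau}v_{0}$ at $\tau=\tfrac{t}{2}$, with the same increment $h$.

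Applying~\eqref{eq:48extra} to the orbit of $v_{0}$ at time $\tfrac{t}{2}$ then bounds $\limsup_{h\to0+}\norm{T_{t+h}u_{0}-T_{t}u_{0}}_{X}/h$ by
\begin{equation*}
  \frac{e^{\omega t/2}}{t/2}\,\frac{L}{\abs{1-\alpha}}
  \left[b(\tfrac{t}{2})+L\omega_{F}\int_{0}^{t/2}b(s)\,e^{L\omega_{F}(t/2-s)}\,\ds\right]\norm{v_{0}}_{X},
\end{equation*}
and it remains to control the last factor. Invoking~\eqref{eq:9} with $t$ replaced by $\tfrac{t}{2}$ gives $\norm{v_{0}}_{X}=\norm{T_{t/2}u_{0}}_{X}\le M\,e^{\hat{\omega}t/2}\,2^{\delta}\,t^{-\delta}\,\norm{u_{0}}_{Y}^{\gamma}$; substituting this and collecting constants --- via $\tfrac{1}{t/2}\cdot2^{\delta}t^{-\delta}=2^{\delta+1}t^{-\delta-1}$ and $e^{\omega t/2}e^{\hat{\omega}t/2}=e^{(\omega+\hat{\omega})t/2}$ --- reproduces exactly the right-hand side of~\eqref{eq:50}. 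The ``in particular'' claim is then immediate, since whenever $\frac{\td }{\dt_{\!+}}T_{t}u_{0}$ exists in $X$ its norm equals $\limsup_{h\to0+}\norm{T_{t+h}u_{0}-T_{t}u_{0}}_{X}/h$, which is already controlled by~\eqref{eq:50}.

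The arithmetic is short, so the only point I expect to require care is the qualifier ``for a.e. $t$'' in~\eqref{eq:48extra}: its exceptional null set may depend on the base point, whereas above I apply~\eqref{eq:48extra} at the $t$-dependent point $v_{0}=T_{t/2}u_{0}$ and precisely at time $\tfrac{t}{2}$. To make this rigorous I would split at a free intermediate time $s\in(0,t)$ rather than at $\tfrac{t}{2}$: the semigroup law gives $\limsup_{h\to0+}\norm{T_{t+h}u_{0}-T_{t}u_{0}}_{X}/h=\limsup_{h\to0+}\norm{T_{t-s+h}(T_{s}u_{0})-T_{t-s}(T_{s}u_{0})}_{X}/h$ for every $s\in(0,t)$. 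A Fubini argument then shows that for a.e. $t>0$ the estimate~\eqref{eq:48extra}, applied to $T_{s}u_{0}$ at time $t-s$, holds for $s$ in a subset of $(0,t)$ of full measure; since the resulting upper bound in $s$ (which combines~\eqref{eq:48extra} with~\eqref{eq:9}) is continuous on $(0,t)$, one may then pass to the value $s=\tfrac{t}{2}$ and conclude~\eqref{eq:50} for a.e. $t>0$.
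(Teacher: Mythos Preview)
Your proof is correct and follows essentially the same approach as the paper: use the semigroup law to write $T_{t+h}u_{0}-T_{t}u_{0}=T_{t/2+h}(T_{t/2}u_{0})-T_{t/2}(T_{t/2}u_{0})$, apply~\eqref{eq:48extra} at time $t/2$ with base point $T_{t/2}u_{0}$, and then bound $\norm{T_{t/2}u_{0}}_{X}$ via~\eqref{eq:9}. The paper additionally remarks that the case $u_{0}\notin Y$ is trivial (the right-hand side of~\eqref{eq:50} being $+\infty$), and it does not address the ``a.e.\ $t$'' issue you raise; your Fubini/continuity workaround is a reasonable way to make that step rigorous.
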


\begin{proof}
  Let $u_{0}\in C$ and $t>0$. Note, if $u_{0}\notin Y$ then
  \eqref{eq:50} trivially holds. Thus, it is sufficient to consider
  the case $u_{0}\in C\cap Y$. By the semigroup property of
  $\{T_{t}\}_{t\ge 0}$ and by~\eqref{eq:48extra} and~\eqref{eq:9}, one
  sees that
  \begin{align*}
    &\limsup_{h\to 0+}\frac{\norm{T_{t+h}u_{0}-T_{t}u_{0}}_{X}}{h}\\
    &\qquad = \limsup_{h\to
      0+}\frac{\norm{T_{\frac{t}{2}+h}(T_{\frac{t}{2}}u_{0})-T_{\frac{t}{2}}(T_{\frac{t}{2}}u_{0})}_{X}}{h}\\
    &\qquad\le\frac{2\,e^{\omega \frac{t}{2}}}{t}\frac{L}{\abs{1-\alpha}}\!\! \left[
        b(\tfrac{t}{2})\,+ L \omega_{F}\!\!\int_{0}^{\frac{t}{2}} b(s)\,e^{
          L\,\omega_{F}(\frac{t}{2}-s)}\ds\right]\,\norm{T_{\frac{t}{2}}u_{0}}_{X}\\
    &\qquad\le \frac{2^{\delta+1}\,e^{\frac{\omega+\hat{\omega}}{2} t}}{t^{\delta+1}}\frac{L \,M}{\abs{1-\alpha}}\!\! \left[
        b(\tfrac{t}{2})\,+ L \omega_{F}\!\!\int_{0}^{\frac{t}{2}} b(s)\,e^{
          L\,\omega_{F}(\frac{t}{2}-s)}\ds\right]\, \norm{u_{0}}_{Y}^{\gamma}.
  \end{align*}
\end{proof}

Now, we suppose, there is a partial ordering ``$\le$'' on $X$ such that
$(X,\le)$ is an ordered vector space. Then, we can state the following
theorem.

\begin{theorem}
  \label{thm:2bis}
  Let $(X,\le)$ be an ordered vector
  space and $F : X\to X$ a Lipschitz continuous mapping satisfying $F(0)=0$. Suppose,
  there is a subset $C\subseteq X$ and two families $\{T_{t}\}_{t\ge
    0}$ and $\{\tilde{T}_{t}\}_{t\ge
    0}$ of mappings $T_{t} : C\to C$ and $\tilde{T}_{t} : C\times
  L^{1}_{loc}([0,\infty);X)\to C$ related by the equation
  \begin{equation}
    \label{eq:47}
    T_{t}u_{0}=\tilde{T}_{t}(u_{0},\tilde{f})\qquad\text{for all $t\ge
      0$, $u_{0}\in C$,}
  \end{equation}
  where $\tilde{f}$ is given by
  $\tilde{f}(t)=-F(T_{t}u_{0})$. Further, suppose 
  \begin{equation}\label{eq:23}
    \text{for every $u_{0}$, $\hat{u}_{0}\in C$ satisfying
      $u_{0}\le\hat{u}_{0}$, one has }T_{t}u_{0}\le
    T_{t}\hat{u}_{0}\text{ for all $t\ge 0$}
  \end{equation}
  and $\{\tilde{T}_{t}\}_{t\ge 0}$ satisfies
  \eqref{eq:4}-\eqref{eq:20bis} for some $\omega\ge 0$ and $L\ge 1$.
  Then for every $u_{0}\in C$ satisfying $u_{0}\ge 0$, one has that
 \begin{equation}
   \label{eq:24}
   \frac{T_{t+h}u_{0}-T_{t}u_{0}}{h}\ge
   \frac{(1+\frac{h}{t})^{\frac{1}{1-\alpha}}-1}{h}\frac{T_{t}u_{0}}{t}+g_{h}(t)
 \end{equation}
 for every $t$, $h>0$ if $\alpha>1$ and
 \begin{equation}
   \label{eq:13}
   \frac{T_{t+h}u_{0}-T_{t}u_{0}}{h}\le
   \frac{(1+\frac{h}{t})^{\frac{1}{1-\alpha}}-1}{h}\frac{T_{t}u_{0}}{t}+g_{h}(t)
 \end{equation}
 for every $t$, $h>0$ if $\alpha<1$, where for every $h>0$,
 $g_{h} : (0,\infty)\to X$ is a continuous function satisfies
 \begin{equation}
   \label{eq:15}
   \begin{split}
     &\norm{g_{h}(t)}_{X}\le
     \left(1+\tfrac{h}{t}\right)^{\frac{1}{1-\alpha}} L \times\\
     &\hspace{2.5cm}\times\int_{0}^{t}
     e^{\omega(t-r)}\lnorm{\frac{F(T_{r}u_{0})-
     \left(1+\frac{h}{t}\right)^{\frac{\alpha}{\alpha-1}}
     F(T_{r+\frac{h}{t}r}u_{0})}{h}}_{X}\dr
   \end{split}
 \end{equation}
 for every $t>0$.
\end{theorem}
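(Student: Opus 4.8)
The plan is to reduce everything to the auxiliary (un\nobreakdash-perturbed) flow $\{\tilde{T}_{t}\}$ by means of \eqref{eq:47}, apply to it the homogeneity/scaling identity \eqref{eq:21bis}, and then extract the correctly signed term from the order\nobreakdash-preservation \eqref{eq:23}. So I would fix $u_{0}\in C$ with $u_{0}\ge 0$, fix $t,h>0$, and set $\lambda:=1+\tfrac{h}{t}>1$. By \eqref{eq:47} the orbit $T_{\cdot}u_{0}$ solves the un\nobreakdash-perturbed problem with forcing $\tilde{f}(s)=-F(T_{s}u_{0})$, so $\tilde{T}_{\lambda t}(u_{0},\tilde{f})=T_{\lambda t}u_{0}=T_{t+h}u_{0}$; applying \eqref{eq:21bis} to $\{\tilde{T}_{t}\}$ with this $\lambda$ gives
\begin{equation*}
  P:=\lambda^{\frac{1}{\alpha-1}}\,T_{t+h}u_{0}
  =\tilde{T}_{t}\bigl(\lambda^{\frac{1}{\alpha-1}}u_{0},\,g\bigr),
  \qquad g(s):=-\lambda^{\frac{\alpha}{\alpha-1}}F(T_{\lambda s}u_{0}).
\end{equation*}
The whole point of this rewriting is that the forcing $g$ depends only on the original orbit $T_{\cdot}u_{0}$; that is what will eventually make the bound \eqref{eq:15} involve $F(T_{r}u_{0})$ and $F(T_{r+\frac{h}{t}r}u_{0})$ rather than $F$ of a rescaled orbit.

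Next I would introduce $Q:=\tilde{T}_{t}(u_{0},g)$ — the same forcing, but the un\nobreakdash-scaled initial datum — and, using $T_{t}u_{0}=\tilde{T}_{t}(u_{0},-F(T_{\cdot}u_{0}))$ once more, write the algebraic identity
\begin{equation*}
  T_{t+h}u_{0}-T_{t}u_{0}
  =\lambda^{\frac{1}{1-\alpha}}(P-Q)
  +\lambda^{\frac{1}{1-\alpha}}\bigl(Q-T_{t}u_{0}\bigr)
  +\bigl(\lambda^{\frac{1}{1-\alpha}}-1\bigr)T_{t}u_{0}.
\end{equation*}
Then I set $g_{h}(t):=\tfrac{1}{h}\lambda^{\frac{1}{1-\alpha}}\bigl(Q-T_{t}u_{0}\bigr)$. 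Since $Q$ and $T_{t}u_{0}$ are the time\nobreakdash-$t$ values of two solutions of the un\nobreakdash-perturbed problem starting from the \emph{same} datum $u_{0}$ and forced respectively by $g$ and by $-F(T_{\cdot}u_{0})$, the growth estimate \eqref{eq:20bis} for $\{\tilde{T}_{t}\}$, taken with $s=0$ (where the initial terms cancel), yields
\begin{equation*}
  \norm{Q-T_{t}u_{0}}_{X}
  \le L\int_{0}^{t}e^{\omega(t-r)}\bigl\lVert F(T_{r}u_{0})-\lambda^{\frac{\alpha}{\alpha-1}}F(T_{\lambda r}u_{0})\bigr\rVert_{X}\,\dr ,
\end{equation*}
and dividing by $h$ and recalling $\lambda r=r+\tfrac{h}{t}r$ is precisely \eqref{eq:15}. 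Continuity of $t\mapsto g_{h}(t)$ on $(0,\infty)$ is then routine: $\lambda=\lambda(t)$ depends continuously on $t$, hence so does $t\mapsto T_{\lambda(t)\,\cdot}u_{0}$ in $C([0,S];X)$ and therefore $t\mapsto g$ in $L^{1}(0,t;X)$, while $\tilde{T}_{t}(u_{0},\cdot)$ depends continuously on its forcing again by \eqref{eq:20bis}.

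It remains to sign the term $\lambda^{\frac{1}{1-\alpha}}(P-Q)$, for which first note $\lambda^{\frac{1}{1-\alpha}}>0$. If $\alpha>1$ then $\lambda^{\frac{1}{\alpha-1}}\ge 1$, so $\lambda^{\frac{1}{\alpha-1}}u_{0}\ge u_{0}$; if $\alpha<1$ then $\lambda^{\frac{1}{\alpha-1}}\le 1$, so $\lambda^{\frac{1}{\alpha-1}}u_{0}\le u_{0}$. In both cases $P$ and $Q$ are the time\nobreakdash-$t$ values of two solutions of the un\nobreakdash-perturbed problem carrying the \emph{common} forcing $g$ and starting from the ordered data $\lambda^{\frac{1}{\alpha-1}}u_{0}$ and $u_{0}$, so order\nobreakdash-preservation gives $P-Q\ge 0$ when $\alpha>1$ and $P-Q\le 0$ when $\alpha<1$. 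Substituting into the displayed identity, discarding the (correctly signed) first term, and dividing by $h>0$ then produces the asserted lower bound \eqref{eq:24} for $\alpha>1$ and, symmetrically, the upper bound \eqref{eq:13} for $\alpha<1$.

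The step I expect to be the real obstacle is exactly this last one. The hypothesis \eqref{eq:23} is an order\nobreakdash-preservation statement for the \emph{perturbed} family $\{T_{t}\}$, whereas what is used to sign $P-Q$ is order\nobreakdash-preservation with respect to the initial datum for the \emph{un\nobreakdash-perturbed} flow $\tilde{T}_{t}(\cdot,g)$ at a \emph{fixed} forcing $g$. Making this transfer of the comparison principle from $\{T_{t}\}$ to $\{\tilde{T}_{t}\}$ rigorous — through \eqref{eq:47}, the scaling \eqref{eq:21bis}, and the Lipschitz continuity of $F$ — is the one genuinely non\nobreakdash-formal ingredient; once it is available, the rest is bookkeeping with \eqref{eq:21bis} and a single application of \eqref{eq:20bis}.
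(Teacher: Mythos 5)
Your proof is essentially identical to the paper's: the same scaling of $\{\tilde{T}_{t}\}$ via \eqref{eq:21bis} with $\lambda=1+\tfrac{h}{t}$, the same three-term decomposition into $\lambda^{\frac{1}{1-\alpha}}(P-Q)$, $\lambda^{\frac{1}{1-\alpha}}(Q-T_{t}u_{0})$ and $(\lambda^{\frac{1}{1-\alpha}}-1)T_{t}u_{0}$, the same choice of $g_{h}$, the same application of \eqref{eq:20bis} at $s=0$ to obtain \eqref{eq:15}, and the same signing of the discarded term according to whether $\alpha\gtrless 1$. The one step you flag as non-formal --- transferring order-preservation from the perturbed family $\{T_{t}\}$ to the unperturbed flow $\tilde{T}_{t}(\cdot,g)$ at the fixed rescaled forcing $g$ --- is treated no more carefully in the paper, which simply invokes the order-preservation hypothesis at exactly that point, so your argument is as complete as the published one.
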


Before giving the proof of Theorem~\ref{thm:2bis}, we need to recall
the following definition.

\begin{definition}
  If $(X,\le)$ is an ordered vector space then a family
  $\{T_{t}\}_{t\ge 0}$ of mappings $T_{t} : C\to C$ defined on a
  subset $C\subseteq X$ is called \emph{order preserving} if
  $\{T_{t}\}_{t\ge 0}$ satisfies~\eqref{eq:23}.
\end{definition}

With this in mind, we can now give the proof above the preceding theorem.

\begin{proof}[Proof of Theorem~\ref{thm:2bis}]
  First, let $\{\tilde{T}_{t}\}_{t\ge 0}$ be the family of operators
  related to $\{T_{t}\}_{t\ge 0}$ by~\eqref{eq:47}, and for $t$, $h>0$, let
  $\lambda:=\left(1+\tfrac{h}{t}\right)$. Since $\lambda>1$, 
 $\lambda^{\frac{1}{\alpha-1}}u_{0}\le u_{0}$ if $\alpha<1$ and
 $\lambda^{\frac{1}{\alpha-1}}u_{0}\ge u_{0}$ if $\alpha>1$.
Thus, if $\alpha<1$, then by~\eqref{eq:16bis} and~\eqref{eq:23}, one has that
  \begin{align*}
    \tilde{T}_{t+h}(u_{0},\tilde{f})-\tilde{T}_{t}(u_{0},\tilde{f})
    &=\lambda^{\frac{1}{1-\alpha}}
      \tilde{T}_{t}\left[\lambda^{\frac{1}{\alpha-1}}u_{0},\lambda^{\frac{\alpha}{\alpha-1}}
      \tilde{f}(\lambda\cdot)\right]-\tilde{T}_{t}(u_{0},\tilde{f})\\
    &=\lambda^{\frac{1}{1-\alpha}}\left[
      \tilde{T}_{t}\left[\lambda^{\frac{1}{\alpha-1}}u_{0},\lambda^{\frac{\alpha}{\alpha-1}}
      \tilde{f}(\lambda\cdot)\right]-\tilde{T}_{t}\left[u_{0},\lambda^{\frac{\alpha}{\alpha-1}}
      \tilde{f}(\lambda\cdot)\right]\right]\\
    &\hspace{1.5cm}+\lambda^{\frac{1}{1-\alpha}} \tilde{T}_{t}\left[u_{0},
      \lambda^{\frac{\alpha}{\alpha-1}}\tilde{f}(\lambda\cdot)\right]-\tilde{T}_{t}(u_{0},\tilde{f})\\
    &\le \lambda^{\frac{1}{1-\alpha}}
      \left[\tilde{T}_{t}\left[u_{0},\lambda^{\frac{\alpha}{\alpha-1}}\tilde{f}(\lambda\cdot)\right]
      -\tilde{T}_{t}\left[u_{0},\tilde{f}\right]\right]\\
    &\hspace{3.5cm}+\left[\lambda^{\frac{1}{1-\alpha}}-1\right]\,\tilde{T}_{t}(u_{0},\tilde{f})
  \end{align*}
  and, similarly, if $\alpha>1$, then
  \begin{align*}
        \tilde{T}_{t+h}(u_{0},\tilde{f})-\tilde{T}_{t}(u_{0},\tilde{f})
        &\ge \lambda^{\frac{1}{1-\alpha}}
      \left[\tilde{T}_{t}\left[u_{0},\lambda^{\frac{\alpha}{\alpha-1}}\tilde{f}(\lambda\cdot)\right]
      -\tilde{T}_{t}\left[u_{0},\tilde{f}\right]\right]\\
    &\hspace{3.5cm}+\left[\lambda^{\frac{1}{1-\alpha}}-1\right]\,\tilde{T}_{t}(u_{0},\tilde{f}).
  \end{align*}
  Now, by replacing $\tilde{f}(t)$ by $-F(T_{t}u_{0})$ and by~\eqref{eq:45}, we
  can rewrite the above two inequalities and arrive to~\eqref{eq:24}
  and~\eqref{eq:13}, where $g(t)$ is given by
  \begin{displaymath}
    g_{h}(t)=\left(1+\tfrac{h}{t}\right)^{\frac{1}{1-\alpha}}\frac{
      \tilde{T}_{t}\left[u_{0},\lambda^{\frac{\alpha}{\alpha-1}}\tilde{f}(\lambda\cdot)\right]
      -\tilde{T}_{t}\left[u_{0},\tilde{f}\right]}{h}.
  \end{displaymath}
  Note, by~\eqref{eq:20bis}, one has that $g$ satisfies~\eqref{eq:15}.
\end{proof}

By Theorem~\ref{thm:2bis}, if the derivative
$\frac{\td }{\dt}_{\!+}T_{t}u_{0}$ belongs to $L^{1}_{loc}(0,T;X)$ for
$T>0$, then we can state the following.

\begin{corollary}
  \label{cor:2bis}
  Under the hypotheses of Theorem~\ref{thm:2bis}, suppose that for
  $u_{0}\in C$ satisfying $u_{0}\ge 0$, the right hand-side derivative
  $\frac{\td T_{t}u_{0}}{\dt}_{\!+} \in L^{1}_{loc}([0,T);X)$ for some
  $T>0$. Then, one has that
 \begin{displaymath}
   %\label{eq:24bis}
   (\alpha-1)\frac{\td T_{t}u_{0}}{\dt}_{\!\!\!+}\ge -\frac{T_{t}u_{0}}{t}+(\alpha-1)g_{0}(t),
 \end{displaymath}
 for a.e. $t\in (0,T)$, where $g_{0} : (0,T)\to X$ is a measurable function satisfying
  \begin{equation}
    \label{eq:15bis}
     \norm{g_{0}(t)}_{X}\le \frac{L}{t} \int_{0}^{t}
      e^{\omega(t-r)}\left[\omega\lnorm{\frac{\td
            T_{r}u_{0}}{\dr}_{\!\!\!+}}_{X}
        +\frac{\abs{\alpha}}{\abs{\alpha-1}}\norm{T_{r}u}_{X}
          \right]\,\dr
  \end{equation}
 for a.e. $t\in (0,T)$.
\end{corollary}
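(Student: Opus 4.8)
The plan is to let $h\to0+$ in the two one-sided inequalities~\eqref{eq:24} (case $\alpha>1$) and~\eqref{eq:13} (case $\alpha<1$) of Theorem~\ref{thm:2bis}, controlling the remainder $g_h$ through its explicit bound~\eqref{eq:15}. Fix $u_0\in C$ with $u_0\ge 0$. For a.e.\ $t\in(0,T)$ the right-hand derivative $\frac{\td T_{t}u_{0}}{\dt}_{\!+}$ exists --- which is exactly what the standing hypothesis $\frac{\td T_{t}u_{0}}{\dt}_{\!+}\in L^{1}_{loc}$ supplies --- so $\tfrac1h\bigl(T_{t+h}u_0-T_tu_0\bigr)\to\frac{\td T_{t}u_{0}}{\dt}_{\!+}$ in $X$, while the elementary expansions $\tfrac1h\bigl[(1+\tfrac ht)^{\frac1{1-\alpha}}-1\bigr]\to\tfrac1{(1-\alpha)t}$ and $\tfrac1h\bigl[(1+\tfrac ht)^{\frac\alpha{\alpha-1}}-1\bigr]\to\tfrac\alpha{(\alpha-1)t}$ identify the limit of the first right-hand term of~\eqref{eq:24}/\eqref{eq:13} as $\tfrac1{(1-\alpha)t}T_tu_0$. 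Writing $\psi(t):=\frac{\td T_{t}u_{0}}{\dt}_{\!+}-\tfrac1{(1-\alpha)t}T_tu_0$, the goal is to produce a measurable $g_0$ obeying~\eqref{eq:15bis} with $\psi(t)\ge g_0(t)$ when $\alpha>1$ and $\psi(t)\le g_0(t)$ when $\alpha<1$; multiplying through by $\alpha-1$, which reverses the inequality exactly when $\alpha<1$, then gives $(\alpha-1)\frac{\td T_{t}u_{0}}{\dt}_{\!+}\ge-\tfrac1tT_tu_0+(\alpha-1)g_0(t)$ in both cases.

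For the limit of~\eqref{eq:15}, abbreviate $\lambda:=1+\tfrac ht$, $\mu:=\lambda^{\frac\alpha{\alpha-1}}$ and split $F(T_ru_0)-\mu F(T_{\lambda r}u_0)=[F(T_ru_0)-F(T_{\lambda r}u_0)]+(1-\mu)F(T_{\lambda r}u_0)$. Since $F$ is $\omega$-Lipschitz with $F(0)=0$, hence $\norm{F(x)}_X\le\omega\norm{x}_X$, the $h$-quotient of the first bracket has norm $\le\omega\,\tfrac1h\norm{T_ru_0-T_{\lambda r}u_0}_X$, which (as $\lambda r-r=\tfrac{rh}{t}$) tends for a.e.\ $r$ to $\omega\tfrac rt\norm{\frac{\td T_{r}u_{0}}{\dr}_{\!+}}_X$, while the $h$-quotient of the second bracket has norm $\le\tfrac{|1-\mu|}{h}\,\omega\norm{T_{\lambda r}u_0}_X\to\tfrac{|\alpha|}{|\alpha-1|t}\,\omega\norm{T_ru_0}_X$. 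Feeding these into the integral in~\eqref{eq:15}, using $T_{\lambda r}u_0-T_ru_0=\int_r^{\lambda r}\frac{\td T_{s}u_{0}}{\ds}_{\!+}\,\ds$ and a Fubini interchange to recast $\tfrac1h\int_0^te^{\omega(t-r)}\norm{T_{\lambda r}u_0-T_ru_0}_X\,\dr$ as an expression with limit $\tfrac1t\int_0^te^{\omega(t-r)}\,r\,\norm{\frac{\td T_{r}u_{0}}{\dr}_{\!+}}_X\,\dr$, and ordinary dominated convergence for the remaining bounded term, one obtains $\limsup_{h\to0+}\norm{g_h(t)}_X$ bounded by the right-hand side of~\eqref{eq:15bis}.

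Finally, rearranging~\eqref{eq:24} gives $g_h(t)\le\psi_h(t)$ for $\alpha>1$ (and $g_h(t)\ge\psi_h(t)$ for $\alpha<1$ from~\eqref{eq:13}), where $\psi_h(t)$ denotes the left side of the inequality minus its first right-hand summand, so that $\psi_h(t)\to\psi(t)$ in $X$ and $\norm{g_h(t)}_X$ stays bounded. Since $\tilde{T}_t(\cdot,\cdot)$ and $F$ are merely Lipschitz, $g_h(t)$ itself need not converge; I would therefore extract a sequence $h_n\downarrow0$ with $g_{h_n}(t)\to g_0(t)$ using the ambient compactness, or --- in the Banach-lattice situation actually used in Theorem~\ref{thm:2} --- take directly $g_0(t):=-\psi(t)^-$ if $\alpha>1$ and $g_0(t):=\psi(t)^+$ if $\alpha<1$: then $\psi(t)\ge g_0(t)$ (resp.\ $\le$) by the defining properties of the one-sided parts, and $\norm{g_0(t)}_X\le\limsup_h\norm{\psi_h(t)^{\mp}}_X\le\limsup_h\norm{g_h(t)}_X$ by the standard lattice facts $\norm{x^{\pm}}_X\le\norm{x}_X$, $x\mapsto x^{\pm}$ $1$-Lipschitz, and $x\le y\Rightarrow x^+\le y^+$ (equivalently $y^-\le x^-$), while $g_0$ is measurable in either case. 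Passing to the limit (resp.\ using the explicit $g_0$) in~\eqref{eq:24}/\eqref{eq:13} and multiplying by $\alpha-1$ then completes the proof.

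The step I expect to be the main obstacle is the interchange of $\limsup_{h\to0+}$ with the integral defining the bound for $\norm{g_h(t)}_X$: the quotient $r\mapsto\tfrac1h\norm{T_{\lambda r}u_0-T_ru_0}_X$ behaves like a one-sided Hardy--Littlewood maximal function of $\norm{\frac{\td T_{s}u_{0}}{\ds}_{\!+}}_X$, so it is not uniformly dominated and naive dominated convergence is unavailable; the Fubini rewriting --- which reduces the integral, up to factors tending to $1$, to $\tfrac1t\int_0^{t+h}e^{\omega(t-s)}\,s\,\norm{\frac{\td T_{s}u_{0}}{\ds}_{\!+}}_X\,\ds$ and relies on absolute continuity of $t\mapsto T_tu_0$ on compact subintervals of $(0,T)$ --- is what makes the limit tractable. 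A secondary, milder nuisance is that $g_0$ must be obtained as a limit point or one-sided part, not as $\lim_{h\to0+}g_h(t)$, which need not exist.
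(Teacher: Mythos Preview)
The paper does not supply a proof of this corollary; it is stated as following from Theorem~\ref{thm:2bis} by sending $h\to0+$ in \eqref{eq:24}--\eqref{eq:15}. Your plan is precisely this, and you have correctly located both places where the passage to the limit is not automatic: (i) the integral in \eqref{eq:15} cannot be handled by ordinary dominated convergence because the quotient $\tfrac1h\norm{T_{\lambda r}u_0-T_ru_0}_X$ is only majorized by a maximal function of $\norm{\tfrac{\td T_{s}u_0}{\td s}_{\!+}}_X$; your Fubini rewriting, which uses the hypothesis $\tfrac{\td T_{t}u_0}{\td t}_{\!+}\in L^1_{\mathrm{loc}}([0,T);X)$ to express $T_{\lambda r}u_0-T_ru_0$ as an integral of the right-derivative and then interchange, is the correct remedy; (ii) there is no reason for $g_h(t)$ itself to converge in $X$, and your construction $g_0=-\psi^-$ (resp.\ $\psi^+$) via lattice operations, together with the monotonicity $x\le y\Rightarrow y^-\le x^-$ to transfer the norm bound from $g_h$ to $\psi_h^{\mp}$, is exactly what is needed for the only downstream application (Theorem~\ref{thm:2}, where $X$ is a Banach lattice).

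Two remarks. First, in the bare generality of Theorem~\ref{thm:2bis} (only an ordered vector space), your ``ambient compactness'' alternative for producing $g_0$ is not justified; but since the paper never invokes Corollary~\ref{cor:2bis} outside the lattice setting, this is harmless. Second, your split of the integrand in~\eqref{eq:15} produces, in the limit, a factor $r$ on the derivative term and an additional Lipschitz constant of $F$ on the $\norm{T_ru_0}_X$ term; neither appears in~\eqref{eq:15bis} as printed. This points to minor typos in the paper's displayed bound rather than an error on your side.
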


%%%%%%%%%%%%%%%%%%%%%%%%%%%%%%%%%%%%%%%%%%%%%%%%%%
%
%
%
%                          Accretive operators
%
%
%
%
%
%%%%%%%%%%%%%%%%%%%%%%%%%%%%%%%%%%%%%%%%%%%%%%%%%%

\section{Homogeneous accretive operators}
\label{sec:semigroups}

We begin this section with the following definition. Throughout this
section, suppose $X$ is a Banach space with norm
$\norm{\cdot}_{X}$.

\begin{definition}\label{def:quasi-accretive}
  An operator $A$ on $X$ is called \emph{accretive} in $X$ if for every
  $(u,v)$, $(\hat{u},\hat{v})\in A$ and every $\lambda\ge 0$,
  \begin{displaymath}
    \norm{u-\hat{u}}_{X}\le \norm{u-\hat{u}+\lambda (v-\hat{v})}_{X}.
  \end{displaymath}
  and $A$ is called \emph{$m$-accretive} in $X$ if $A$ is accretive
  and satisfies the \emph{range condition}
  \begin{equation}
    \label{eq:11}
    Rg(I+\lambda A)=X\qquad\text{for some (or equivalently, for all)
      $\lambda>0$, $\lambda\,\omega<1$,}
  \end{equation}
  More generally, an operator $A$ on $X$ is called \emph{quasi
    ($m$-)accretive} in $X$ if there is an $\omega\in \R$
  such that $A+\omega I$ is ($m$-)accretive in $X$.
\end{definition}

If $A$ is quasi $m$-accretive in $X$, then the classical
existence theorem~\cite[Theorem~6.5]{Benilanbook}
(cf.,~\cite[Corollary~4.2]{MR2582280}) yields that for every
$u_{0}\in \overline{D(A)}^{\mbox{}_{X}}$ and $f\in L^{1}(0,T;X)$,
there is a unique \emph{mild} solution $u \in C([0,T];X)$
of~\eqref{eq:10bis}.

\begin{definition}\label{def:mild-solution}
  For given $u_{0}\in \overline{D(A)}^{\mbox{}_{X}}$ and $f\in
  L^{1}(0,T;X)$, a function $u\in C([0,T];X)$ is called a \emph{mild
    solution} of the inhomogeneous
differential inclusion~\eqref{eq:10bis} with initial value $u_{0}$
if $u(0)=u_{0}$ and for every $\varepsilon>0$,
there is a \emph{partition} $\tau_{\varepsilon} : 0=t_{0}<t_{1}<\cdots <
t_{N}=T$ and a \emph{step function}
\begin{displaymath}
  u_{\varepsilon,N}(t)=u_{0}\,\mathds{1}_{\{t=0\}}(t)+\sum_{i=1}^{N}u_{i}\,\mathds{1}_{(t_{i-1},t_{i}]}(t)
  \qquad\text{for every $t\in [0,T]$}
\end{displaymath}
satisfying
\begin{align*}
  &t_{i}-t_{i-1}<\varepsilon\qquad\text{ for all $i=1,\dots,N$,}\\
  &\sum_{N=1}^{N}\int_{t_{i-1}}^{t_{i}}\norm{f(t)-\overline{f}_{i}}\,\dt<\varepsilon\qquad
    \text{where
    $\overline{f}_{i}:=\frac{1}{t_{i}-t_{i-1}}\int_{t_{i-1}}^{t_{i}}f(t)\,\dt$,}\\
  & \frac{u_{i}-u_{i-1}}{t_{i}-t_{i-1}}+A u_{i}\ni \overline{f}_{i}\qquad\text{ for all $i=1,\dots,N$,}
\end{align*}
and
\begin{displaymath}
  \sup_{t\in [0,T]}\norm{u(t)-u_{\varepsilon,N}(t)}_{X}<\varepsilon.
\end{displaymath}
\end{definition}

Further, if $A$ is quasi $m$-accretive, then the family
$\{T_{t}\}_{t=0}^{T}$ of mappings $T_{t} :
\overline{D(A)}^{\mbox{}_{X}}\times L^{1}(0,T;X)\to
\overline{D(A)}^{\mbox{}_{X}}$ defined by~\eqref{eq:36} through the
unique mild solution $u$ of Cauchy problem~\eqref{eq:10bis} belongs to
the following class.

\begin{definition}\label{def:semigroup}
   Given a subset $C$ of $X$, a family $\{T_{t}\}_{t=0}^{T}$ of mapping $T_{t} : C\times L^{1}(0,T;X)\to
  C$ is called a \emph{strongly continuous semigroup of
    quasi-contractive mappings $T_{t}$} if $\{T_{t}\}_{t=0}^{T}$
  satisfies the following three properties:
  \begin{itemize}
  \item (\emph{semigroup property}) for every $(u_{0},f)\in \overline{D(A)}^{\mbox{}_{X}}\times L^{1}(0,T;X)$,
    \begin{equation}
      \label{eq:63}
      T_{t+s}(u_{0},f)=T_{t}(T_{s}(u_{0},f),f(s+\cdot))
    \end{equation}
    for every $t$, $s\in [0,T]$ with $t+s\le T$;
  \item (\emph{strong continuity}) for every
    $(u_{0},f)\in \overline{D(A)}^{\mbox{}_{X}}\times L^{1}(0,T;X)$,
    \begin{displaymath}
      \textrm{$t\mapsto T_{t}(u_{0},f)$ belongs to $C([0,T];X)$};
    \end{displaymath}
  \item (\emph{$\omega$-quasi contractivity}) $T_{t}$
    satisfies~\eqref{eq:20bis} with $L=1$.
  \end{itemize}
\end{definition}

Taking $f\equiv 0$ and only varying $u_{0}\in
\overline{D(A)}^{\mbox{}_{X}}$, defines by
\begin{equation}
  \tag{\ref{eq:55}}
  T_{t}u_{0}=T_{t}(u_{0},0)\qquad\text{for every $t\ge 0$,}
\end{equation}
a strongly continuous semigroup  $\{T_{t}\}_{t\ge 0}$ on $\overline{D(A)}^{\mbox{}_{X}}$ of
$\omega$-quasi contractions $T_{t} : \overline{D(A)}^{\mbox{}_{X}}\to \overline{D(A)}^{\mbox{}_{X}}$.
Given a family $\{T_{t}\}_{t\ge 0}$ of $\omega$-quasi contractions
$T_{t}$ on $\overline{D(A)}^{\mbox{}_{X}}$, then the operator
\begin{equation}
  \label{eq:61}
  A_{0}:=\Bigg\{(u_{0},v)\in X\times X\Bigg\vert\;\lim_{h\downarrow
    0}\frac{T_{h}(u_{0},0)-u_{0}}{h}=v\text{ in $X$}\Bigg\}
\end{equation}
is an $\omega$-quasi accretive well-defined mapping
$A_{0} : D(A_{0})\to X$ and called the \emph{infinitesimal generator}
of $\{T_{t}\}_{t\ge 0}$. If the Banach space $X$ and its dual space
$X^{\ast}$ are both uniformly convex
(see~\cite[Proposition~4.3]{MR2582280}), then one has that
\begin{displaymath}
  -A_{0}=A^{0},
\end{displaymath}
where $A^{\circ}$ is the minimal selection of $A$ defined
by
\begin{equation}
  \label{eq:65}
  A^{\! \circ}:=\Big\{(u,v)\in
  A\,\Big\vert\big. \norm{v}_{X}=\inf_{\hat{v}\in Au}\norm{\hat{v}}_{X}\Big\}.
\end{equation}
For simplicity, we ignore the additional geometric
assumptions on the Banach space $X$, and refer to
the two families $\{T_{t}\}_{t=0}^{T}$ defined by~\eqref{eq:36} on
$\overline{D(A)}^{\mbox{}_{X}}\times L^{1}(0,T;X)$ and
$\{T_{t}\}_{t\ge 0}$ defined by~\eqref{eq:55} on
$\overline{D(A)}^{\mbox{}_{X}}$ as the \emph{semigroup generated by $-A$}.\medskip

Further, for every
$u_{0}\in \overline{D(A)}^{\mbox{}_{X}}$, if $f\in L^{1}(0,T;X)$ is given by the step function
$f=\sum_{i=1}^{N}f_{i}\,\mathds{1}_{(t_{i-1}, t_{i}]}$, then the
corresponding mild solution $u : [0,T]\to X$ of Cauchy problem
\eqref{eq:10bis} is given by
\begin{equation}
  \label{eq:51}
  u(t)= u_{0}\,\mathds{1}_{\{t=0\}}(t)+\sum_{i=1}^{N}u_{i}(t) \mathds{1}_{(t_{i-1},t_{i}]}(t)
\end{equation}
where each $u_{i}$ is the unique mild solution of the Cauchy problem
(for constant $f\equiv f_{i}$)
\begin{equation}
  \label{eq:52}
  \frac{\td u_{i}}{\dt}+A(u_{i}(t))\ni f_{i}\quad\text{ on
    $(t_{i-1},t_{i})$, and }\quad u_{i}(t_{i-1})=u_{i-1}(t_{i-1})
\end{equation}
for every $i=1,\dots, N$ (cf.,~\cite[Chapter~4.3]{Benilanbook}). In particular,
the semigroup $\{T_{t}\}_{t=0}^{T}$ is
obtained by the \emph{exponential formula}
\begin{equation}
  \label{eq:27}
  T_{t}(u(t_{i-1}),f_{i})=u_{i}(t)=\lim_{n\to\infty} \left[J_{\frac{t-t_{i-1}}{n}}^{A-f_{i}}\right]^{n}u(t_{i-1})\qquad\text{in
  $C([t_{i-1},t_{i}];X)$}
\end{equation}
iteratively for every $i=1,\dots,N$, where for $\mu>0$,
$J_{\mu}^{A-f_{i}}=(I+\mu (A-f_{i}))^{-1}$ is the \emph{resolvent operator}
of $A-f_{i}$.\medskip

As for classical solutions, the fact that $A$ is homogeneous
  of order $\alpha\neq 1$, is also reflected in the notion of mild
  solution and, in particular, in the semigroup $\{T_{t}\}_{t=0}^{T}$
  as demonstrated in our next proposition.

\begin{proposition}[{\bfseries Homogeneous accretive operators}]\label{acrethomog} 
  Let $A$ be a quasi $m$-accretive operator on $X$ and
  $\{T_{t}\}_{t=0}^{T}$ the semigroup generated by $-A$ on
  $\overline{D(A)}^{\mbox{}_{X}}\times L^{1}(0,T;X)$. If $A$ is
  homogeneous of order $\alpha\neq 1$, then for every $\lambda>0$, $\{T_{t}\}_{t=0}^{T}$
  satisfies equation%~\eqref{eq:21bis}
\begin{equation}
 \tag{\ref{eq:21bis}}
  \lambda^{\frac{1}{\alpha-1}}T_{\lambda
    t}(u_{0},f)=T_{t}(\lambda^{\frac{1}{\alpha-1}}u_{0},\lambda^{\frac{\alpha}{\alpha-1}}f(\lambda\cdot))\qquad
  \text{for all $t\in \left[0,\tfrac{T}{\lambda}\right]$,}
\end{equation}
for every $(u_{0},f)\in \overline{D(A)}^{\mbox{}_{X}}\times L^{1}(0,T;X)$.
\end{proposition}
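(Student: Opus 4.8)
My plan is to push the change of variables already carried out for \emph{classical} solutions of~\eqref{eq:10bis} (in the paragraph preceding~\eqref{eq:4}) down to the level of \emph{mild} solutions, using the implicit Euler scheme built into Definition~\ref{def:mild-solution}. Fix $\lambda>0$ and $(u_{0},f)\in\overline{D(A)}^{\mbox{}_{X}}\times L^{1}(0,T;X)$, write $u(t):=T_{t}(u_{0},f)$ for the unique mild solution of~\eqref{eq:10bis} on $[0,T]$, and set
\[
  v(t):=\lambda^{\frac{1}{\alpha-1}}u(\lambda t),\qquad
  \tilde f(t):=\lambda^{\frac{\alpha}{\alpha-1}}f(\lambda t),\qquad t\in\bigl[0,\tfrac{T}{\lambda}\bigr].
\]
I would show that $v$ is the unique mild solution on $[0,\tfrac{T}{\lambda}]$ of~\eqref{eq:10bis} with $f$ replaced by $\tilde f$ and $u_{0}$ replaced by $\lambda^{\frac{1}{\alpha-1}}u_{0}$; identifying $v$ with $T_{\cdot}(\lambda^{\frac{1}{\alpha-1}}u_{0},\tilde f)$ by uniqueness then gives exactly~\eqref{eq:21bis}. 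Note first that this scaled problem is well posed: since $A$ is homogeneous of order $\alpha$, its effective domain $D(A)$ is a cone, hence so is $\overline{D(A)}^{\mbox{}_{X}}$, so $\lambda^{\frac{1}{\alpha-1}}u_{0}\in\overline{D(A)}^{\mbox{}_{X}}$; clearly $\tilde f\in L^{1}(0,\tfrac{T}{\lambda};X)$; and $v\in C([0,\tfrac{T}{\lambda}];X)$ with $v(0)=\lambda^{\frac{1}{\alpha-1}}u_{0}$.

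The heart of the argument is that the scaling carries $\varepsilon$-discretizations of $u$ to $\tilde\varepsilon$-discretizations of $v$. Let $\varepsilon>0$ and let $0=t_{0}<\dots<t_{N}=T$ together with the step function $u_{\varepsilon,N}$ of values $u_{i}$ be an $\varepsilon$-discretization of $u$ in the sense of Definition~\ref{def:mild-solution}. Put $s_{i}:=t_{i}/\lambda$, $v_{i}:=\lambda^{\frac{1}{\alpha-1}}u_{i}$, and let $v_{\varepsilon,N}$ be the step function on $[0,\tfrac{T}{\lambda}]$ with partition $(s_{i})$ and values $(v_{i})$ (and $v_{0}:=\lambda^{\frac{1}{\alpha-1}}u_{0}$), so that $v_{\varepsilon,N}(s)=\lambda^{\frac{1}{\alpha-1}}u_{\varepsilon,N}(\lambda s)$. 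Then $s_{i}-s_{i-1}=(t_{i}-t_{i-1})/\lambda<\varepsilon/\lambda$; the substitution $t=\lambda s$ in the averages gives $\overline{\tilde f}_{i}=\lambda^{\frac{\alpha}{\alpha-1}}\overline f_{i}$ and, using $\tfrac{\alpha}{\alpha-1}-1=\tfrac{1}{\alpha-1}$,
\[
  \sum_{i=1}^{N}\int_{s_{i-1}}^{s_{i}}\norm{\tilde f(s)-\overline{\tilde f}_{i}}_{X}\,\ds
  =\lambda^{\frac{1}{\alpha-1}}\sum_{i=1}^{N}\int_{t_{i-1}}^{t_{i}}\norm{f(t)-\overline f_{i}}_{X}\,\dt<\lambda^{\frac{1}{\alpha-1}}\varepsilon;
\]
also $\sup_{s}\norm{v(s)-v_{\varepsilon,N}(s)}_{X}=\lambda^{\frac{1}{\alpha-1}}\sup_{t}\norm{u(t)-u_{\varepsilon,N}(t)}_{X}<\lambda^{\frac{1}{\alpha-1}}\varepsilon$. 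Finally, since $\tfrac{v_{i}-v_{i-1}}{s_{i}-s_{i-1}}=\lambda^{\frac{\alpha}{\alpha-1}}\tfrac{u_{i}-u_{i-1}}{t_{i}-t_{i-1}}$, multiplying the inclusion $\tfrac{u_{i}-u_{i-1}}{t_{i}-t_{i-1}}+Au_{i}\ni\overline f_{i}$ by $\lambda^{\frac{\alpha}{\alpha-1}}$ and invoking the homogeneity identity~\eqref{eq:41} in the form $A(\lambda^{\frac{1}{\alpha-1}}u_{i})=\lambda^{\frac{\alpha}{\alpha-1}}Au_{i}$ yields $\tfrac{v_{i}-v_{i-1}}{s_{i}-s_{i-1}}+Av_{i}\ni\overline{\tilde f}_{i}$. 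Hence $\bigl((s_{i}),(v_{i}),v_{\varepsilon,N}\bigr)$ is a $\tilde\varepsilon$-discretization of $v$ with $\tilde\varepsilon(\varepsilon,\lambda):=\max\{\varepsilon/\lambda,\lambda^{\frac{1}{\alpha-1}}\varepsilon\}$.

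Since $\tilde\varepsilon(\varepsilon,\lambda)\to0$ as $\varepsilon\to0$ for fixed $\lambda$, every prescribed fineness is attainable, so (together with the continuity of $v$ and $v(0)=\lambda^{\frac{1}{\alpha-1}}u_{0}$ noted above) $v$ is a mild solution of the scaled Cauchy problem; uniqueness of mild solutions then identifies $v$ with $T_{\cdot}(\lambda^{\frac{1}{\alpha-1}}u_{0},\tilde f)$, which is~\eqref{eq:21bis}. I expect the only genuinely essential use of the hypotheses to be the homogeneity identity $A(\lambda^{\frac{1}{\alpha-1}}u_{i})=\lambda^{\frac{\alpha}{\alpha-1}}Au_{i}$ --- i.e.\ \eqref{eq:41} with $\lambda^{\frac{1}{\alpha-1}}$ in place of $\lambda$ --- everything else being bookkeeping of powers of $\lambda$; the two mild points needing care are that $\overline{D(A)}^{\mbox{}_{X}}$ is a cone (so the scaled initial datum is admissible) and the passage $\varepsilon\to0$. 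No continuous-dependence estimate is needed along this route. Alternatively, one could first obtain~\eqref{eq:21bis} for piecewise-constant $f$ from the resolvent scaling $\lambda^{\frac{1}{\alpha-1}}J_{\mu}^{A-g}w=J_{\mu/\lambda}^{A-\lambda^{\alpha/(\alpha-1)}g}\bigl(\lambda^{\frac{1}{\alpha-1}}w\bigr)$ (a one-line consequence of~\eqref{eq:41}) together with the exponential formula~\eqref{eq:27}, and then extend to general $f\in L^{1}(0,T;X)$ by approximation using~\eqref{eq:20bis}.
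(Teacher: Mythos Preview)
Your proof is correct and takes a genuinely different route from the paper's. The paper first derives the resolvent identity
\[
  \lambda^{\frac{1}{\alpha-1}}\, J^{A-f}_{\lambda\mu}v
  = J_{\mu}^{A-\lambda^{\frac{\alpha}{\alpha-1}}f}\bigl[\lambda^{\frac{1}{\alpha-1}}v\bigr]
\]
from homogeneity, then iterates it inside the exponential formula~\eqref{eq:27} on each subinterval of a step function $f$, and (implicitly) passes to general $f\in L^{1}(0,T;X)$ by density and~\eqref{eq:20bis}. This is precisely the ``alternative'' you sketch in your last sentence. By contrast, you bypass both the exponential formula and the step-function reduction: you work directly with Definition~\ref{def:mild-solution}, transporting an $\varepsilon$-discretization of $u$ for data $(u_{0},f)$ to a $\tilde\varepsilon$-discretization of $v$ for the scaled data, with all constants in the definition scaling by explicit powers of $\lambda$. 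Your approach is cleaner in that it treats arbitrary $f\in L^{1}$ in one pass and needs no approximation or continuous-dependence estimate; the paper's approach has the minor advantage of isolating the resolvent-level identity~\eqref{eq:26}, which is of independent interest. Your bookkeeping of powers of $\lambda$ (in particular $\tfrac{\alpha}{\alpha-1}-1=\tfrac{1}{\alpha-1}$) and your observation that $\overline{D(A)}^{\mbox{}_{X}}$ is a cone are both correct and address the only places where care is needed.
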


\begin{proof}
 Let $\lambda>0$ and $f\in X$. Then, for every $u$, $v\in X$ and $\mu>0$, 
  \begin{displaymath}
    J_{\mu}^{A-\lambda^{\frac{\alpha}{\alpha-1}}f}\left[\lambda^{\frac{1}{\alpha-1}}v\right]=u\qquad\text{if and only if }\qquad
    u+\mu (Au-\lambda^{\frac{\alpha}{\alpha-1}} f)\ni \lambda^{\frac{1}{\alpha-1}}v.
  \end{displaymath}
  Now, the hypothesis that $A$ is \emph{homogeneous of
    order $\alpha\neq 1$} implies that the right-hand side in the previous characterization is equivalent to
  \begin{displaymath}
    \lambda^{\frac{1}{1-\alpha}} u+\lambda\mu
    (A(\lambda^{\frac{1}{1-\alpha}} u)-f)\ni v,   \qquad\text{or }\qquad
    J_{\lambda\mu}^{A-f}v=\lambda^{\frac{1}{1-\alpha}} u.
  \end{displaymath}
  Therefore, one has that
  \begin{equation}
    \label{eq:26}
    \lambda^{\frac{1}{\alpha-1}}\, J^{A-f}_{\lambda\mu}v=
    J_{\mu}^{A-\lambda^{\frac{\alpha}{\alpha-1}}f}\left[\lambda^{\frac{1}{\alpha-1}}v\right]\qquad\text{for all $\lambda$,
      $\mu>0$, and $v\in X$.}
  \end{equation}
 Now, let $u_{0}\in
 \overline{D(A)}^{\mbox{}_{X}}$, $\pi : 0=t_{0}<t_{1}<\cdots <
 t_{N}=T$ be a partition of $[0,T]$, and $f=\sum_{i=1}^{N}f_{i}\mathds{1}_{(t_{i-1},t_{i}]}\in L^{1}(0,T;X)$
a step function. If $u$ denotes the unique mild solution
of~\eqref{eq:10bis} for this step function $f$, then $u$ is given by~\eqref{eq:51}, were
on each subinterval $(t_{i-1},t_{i}]$, $u_{i}$ is the unique mild
solution of~\eqref{eq:52}. 

Next, let $\lambda>0$ and set
\begin{displaymath}
v_{\lambda}(t):=\lambda^{\frac{1}{\alpha-1}}u(\lambda t)\qquad\text{for
every $t\in \left[0,\tfrac{T}{\lambda}\right]$.}
\end{displaymath}
Then,
\begin{displaymath}
  v_{\lambda}(t)=
  \lambda^{\frac{1}{\alpha-1}}u_{0}\,\mathds{1}_{\{t=0\}}(t)+\sum_{i=1}^{N}\lambda^{\frac{1}{\alpha-1}}u_{i}(\lambda
  t) \mathds{1}_{\left(\frac{t_{i-1}}{\lambda},\frac{t_{i}}{\lambda}\right]}(t)
\end{displaymath}
for every $t\in \left[0,\tfrac{T}{\lambda}\right]$. Obviously,
$v_{\lambda}(0)=\lambda^{\frac{1}{\alpha-1}}u_{0}$. Thus, to show that~\eqref{eq:21bis} holds, it
remains to verify that $v_{\lambda}$ is a mild solution of
\begin{displaymath}
  \tfrac{\td v_{\lambda}}{\dt}+A(v_{\lambda}(t))\ni
  \lambda^{\frac{\alpha}{\alpha-1}}f(\lambda t)\qquad\text{on $\left(0,\tfrac{T}{\lambda}\right)$}
\end{displaymath}
or, in other words, 
\begin{equation}
  \label{eq:46}
  v_{\lambda}(t)=T_{t}(\lambda^{\frac{1}{\alpha-1}}u_{0},\lambda^{\frac{\alpha}{\alpha-1}}f(\lambda\cdot))
\end{equation}
for every $t\in \left[0,\tfrac{T}{\lambda}\right]$. Let $t\in (0,t_{1}/\lambda]$ and $n\in \N$. We apply~\eqref{eq:26} to
\begin{displaymath}
  \mu=\frac{t}{n}\qquad\text{and}\qquad
  v=J^{A-\lambda^{\frac{\alpha}{\alpha-1}}f_{1}}_{\frac{\lambda t}{n}}[\lambda^{\frac{1}{\alpha-1}}u_{0}].
\end{displaymath}
Then, one finds that
\begin{displaymath}
  \left[J^{A-\lambda^{\frac{\alpha}{\alpha-1}}f_{1}}_{\frac{t}{n}}\right]^{2}[\lambda^{\frac{1}{\alpha-1}}u_{0}]
=  J_{\frac{t}{n}}^{A-\lambda^{\frac{\alpha}{\alpha-1}}f_{1}}\left[\lambda^{\frac{1}{\alpha-1}}J_{\frac{\lambda
        t}{n}}^{A-f_{1}}u_{0}\right]
  =\lambda^{\frac{1}{\alpha-1}}\left[J^{A-f_{1}}_{\frac{\lambda t}{n}}\right]^{2}u_{0}.
\end{displaymath}
Applying~\eqref{eq:26} to
$\lambda^{\frac{1}{\alpha-1}}\left[J^{A-f_{1}}_{\frac{\lambda
      t}{n}}\right]^{i}u_{0}$ iteratively for $i=2,\dots, n$ yields
\begin{equation}
  \label{eq:37}
  \lambda^{\frac{1}{\alpha-1}}\, \left[J^{A-f_{1}}_{\frac{\lambda  t}{n}}\right]^{n}u_{0}=
\left[J^{A-\lambda^{\frac{\alpha}{\alpha-1}}f_{1}}_{\frac{t}{n}}\right]^{n}\left[\lambda^{\frac{1}{\alpha-1}}u_{0}\right].
\end{equation}
By~\eqref{eq:27}, sending $n\to +\infty$ in~\eqref{eq:37} yields on the one side
\begin{displaymath}
  \lim_{n\to+\infty}
  \lambda^{\frac{1}{\alpha-1}}\,
  \left[J^{A-f_{1}}_{\frac{\lambda
        t}{n}}\right]^{n}u_{0}=\lambda^{\frac{1}{\alpha-1}}\,u_{1}(\lambda
  t)=v_{\lambda}(t),
\end{displaymath}
and on the other side
\begin{displaymath}
  \lim_{n\to+\infty}\left[
    J^{A-\lambda^{\frac{\alpha}{\alpha-1}}f_{1}}_{\frac{t}{n}}\right]^{n}\left[\lambda^{\frac{1}{\alpha-1}}u_{0}\right]
  =T_{t}(\lambda^{\frac{1}{\alpha-1}}u_{0},\lambda^{\frac{\alpha}{\alpha-1}}f_{1}),
\end{displaymath}
showing that~\eqref{eq:46} holds for every $t\in [0,\frac{t_{1}}{\lambda}]$. 
Repeating this argument on each subinterval $(\frac{t_{i-1}}{\lambda},
\frac{t_{i}}{\lambda}]$ for $i=2,\dots, N$, where one replaces in~\eqref{eq:37} $u_{0}$
by $u(t_{i-1})$, and $f_{1}$ by $f_{i}$, then one sees that $v_{\lambda}$
satisfies~\eqref{eq:46} on the whole interval $[0,\frac{T}{\lambda}]$.
 \end{proof}

 By the preceding proposition and by Lemma~\ref{thm:1bis}, we can now state the
 following result.

\begin{corollary}\label{cor:1bis}
  Let $A$ be a quasi $m$-accretive
  operator on a Banach space $X$ and $\{T_{t}\}_{t=0}^{T}$ the
  semigroup generated by $-A$ on $L^{1}(0,T;X)\times
  \overline{D(A)}^{\mbox{}_{X}}$. If $A$ is
  homogeneous of order $\alpha\neq 1$, then for every
  $(u_{0},f)\in \overline{D(A)}^{\mbox{}_{X}}\times L^{1}(0,T;X)$,
  $t\mapsto T_{t}(u_{0},f)$ satisfies
  \begin{equation}
    \label{eq:22}
    \begin{split}
        &\limsup_{h\to 0+}\lnorm{\frac{T_{t+h}(u_{0},f)-T_{t}(u_{0},f)}{h}}_{X}\\
        & \qquad \le \frac{1}{t}\,e^{\omega t}
        \left[2\frac{\norm{u_{0}}_{X}}{\abs{1-\alpha}}\,
          +\frac{1}{\abs{1-\alpha}}\int_{0}^{t}e^{-\omega
            s}\norm{f(s)}_{X}\,\ds+V_{\omega}(f,t)\right],
      \end{split}
  \end{equation}
  for a.e. $t\in (0,T]$, where $V_{\omega}(f,t)$ is defined
  by~\eqref{eq:64}. In particular, if $f\in W^{1,1}(0,T;X)$ and
  $\tfrac{\td}{\dt}T_{t}(u_{0},f)$ exists in $X$ at a.e. $t\in (0,T)$,
  then $T_{t}(u_{0},f)$ satisfies
  \begin{equation}
    \label{eq:32}
    \begin{split}
        \lnorm{\frac{\td }{\dt}_{\!\! +}\! T_{t}(u_{0},f)}_{X} & \le
        \frac{L}{t}\,e^{\omega t}
        \left[2\frac{\norm{u_{0}}_{X}}{\abs{1-\alpha}}\,
          +\frac{1}{\abs{1-\alpha}}\int_{0}^{t}e^{-\omega
            s}\norm{f(s)}_{X}\,\ds\right.\\
        &\left.\hspace{4.4cm}+\int_{0}^{t}e^{-\omega
            s}\norm{f'(s)}_{X}\,s\,\ds \right]
      \end{split}
  \end{equation}
  for a.e. $t\in (0,T)$.
\end{corollary}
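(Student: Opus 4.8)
The plan is to check that the semigroup $\{T_{t}\}_{t=0}^{T}$ generated by $-A$ on $\overline{D(A)}^{\mbox{}_{X}}\times L^{1}(0,T;X)$ satisfies exactly the hypotheses of Lemma~\ref{thm:1bis} with $C=\overline{D(A)}^{\mbox{}_{X}}$ and $L=1$, and then simply to read off~\eqref{eq:22} and~\eqref{eq:32} from~\eqref{eq:38} and~\eqref{eq:40} respectively.

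There are three conditions to verify. For~\eqref{eq:4}: since $A$ is homogeneous of order $\alpha$ we have $0\in A0$, so $u\equiv 0$ is a (classical, hence mild) solution of~\eqref{eq:10bis} with $u_{0}=0$ and $f\equiv 0$; uniqueness of mild solutions for quasi $m$-accretive $A$ then forces $T_{t}(0,0)=0$ for all $t\in[0,T]$. The scaling identity~\eqref{eq:21bis} is precisely the content of Proposition~\ref{acrethomog}. Finally,~\eqref{eq:20bis} with $L=1$ and with $\omega$ the quasi-accretivity constant of $A$ is the classical contraction estimate for mild solutions of the inhomogeneous Cauchy problem~\eqref{eq:10bis} (see~\cite[Theorem~6.5]{Benilanbook}, \cite[Corollary~4.2]{MR2582280}); it is exactly the $\omega$-quasi contractivity already incorporated into Definition~\ref{def:semigroup}.

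With these three facts in hand, Lemma~\ref{thm:1bis}(2) applied to $\{T_{t}\}_{t=0}^{T}$ yields~\eqref{eq:38}, which upon setting $L=1$ is precisely~\eqref{eq:22}; note that the right-hand side is meaningful for every $f\in L^{1}(0,T;X)$ even when $V_{\omega}(f,t)=+\infty$, since~\eqref{eq:22} is only a $\limsup$ bound. If moreover $f\in W^{1,1}(0,T;X)$ and the right-hand side derivative $\tfrac{\td}{\dt}T_{t}(u_{0},f)$ exists in $X$ at a.e.\ $t\in(0,T)$, then Lemma~\ref{thm:1bis}(3) applies and gives~\eqref{eq:40}, i.e.~\eqref{eq:32}.

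Since every ingredient has already been established, there is no substantial obstacle here; the proof is essentially bookkeeping. The only points requiring a moment's care are to make sure that the constant $\omega$ appearing in~\eqref{eq:20bis} is the one entering the statement (which holds by the very way $\{T_{t}\}$ is defined as the semigroup generated by $-A$), and that the hypothesis ``$\tfrac{\td}{\dt}T_{t}(u_{0},f)$ exists a.e.'' in the second part is exactly what Lemma~\ref{thm:1bis}(3) demands, so that no a.e.\ differentiability of mild solutions needs to be proved at this stage.
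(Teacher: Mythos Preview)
Your proposal is correct and matches the paper's approach exactly: the paper simply remarks that the corollary follows ``by the preceding proposition and by Lemma~\ref{thm:1bis}'', and your write-up spells out precisely this, verifying~\eqref{eq:4},~\eqref{eq:21bis} (via Proposition~\ref{acrethomog}), and~\eqref{eq:20bis} with $L=1$ before invoking Lemma~\ref{thm:1bis}(2) and~(3).
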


To consider the regularizing effect of mild solutions to the
Cauchy problem~\eqref{eq:18} for the perturbed operator $A+F$, we
recall the following well-known result from the literature.

\begin{proposition}[{\cite[Lemma~7.8]{Benilanbook}}]\label{prop:Lipschitz-property}
  If $A+\omega I$ is accretive in $X$ and $f\in BV(0,T;X)$, then for
  every $u_{0}\in D(A)$, the mild
  solution $u(t):=T_{t}(u_{0},f)$, ($t\in [0,T]$), of Cauchy problem~\eqref{eq:10bis} is
  Lipschitz continuous on $[0,T]$ satisfying
  \begin{align*}
    &\limsup_{h\to 0+}\frac{\norm{u(t+h)-u(t)}_{X}}{h}\\
    &\qquad\le
    e^{\omega t}\norm{f(0+)-y}_{X}+\tilde{V}(f,t+)
    +\omega\int_{0}^{t}e^{\omega(t-s)}\tilde{V}(f,s+)\,\ds 
  \end{align*}
  for every $t\in [0,T]$ and $v\in Au_{0}$, where
  \begin{displaymath}
    \tilde{V}(f,t+):=\limsup_{h\to
      0+}\int_{0}^{t}\frac{\norm{f(s+h)-f(s)}_{X}}{h}\,\ds.
  \end{displaymath}
\end{proposition}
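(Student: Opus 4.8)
The plan is to combine the fundamental comparison estimate for mild solutions --- which is precisely \eqref{eq:20bis} with $L=1$ --- with the elementary observation that a time-translate of a mild solution is again a mild solution of a time-shifted Cauchy problem of the same type. Since $A+\omega I$ is accretive, we may (and do) assume $\omega\ge 0$, as enlarging $\omega$ to a nonnegative value preserves accretivity. Fix $t\in[0,T)$ and $h>0$ with $t+h\le T$. By the semigroup property \eqref{eq:63}, the map $s\mapsto u(s+h)=T_{s}(u(h),f(h+\cdot))$ is the mild solution of the same equation driven by $f(h+\cdot)$ and started at $u(h)$; applying \eqref{eq:20bis} on $[0,t]$ (with $s=0$) and multiplying by $e^{\omega t}$ then gives
\[
  \norm{u(t+h)-u(t)}_{X}\le e^{\omega t}\norm{u(h)-u_{0}}_{X}+\int_{0}^{t} e^{\omega(t-r)}\norm{f(r+h)-f(r)}_{X}\,\dr.
\]
So the task reduces to controlling the two terms on the right as $h\to0+$.

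The first term is where the hypothesis $u_{0}\in D(A)$ is used. Choose $v\in Au_{0}$; then the constant function $w\equiv u_{0}$ is a classical --- hence mild --- solution of $w'+Aw\ni v$ with $w(0)=u_{0}$, so comparing $u$ with $w$ through \eqref{eq:20bis} (same initial data, $s=0$) yields
\[
  \norm{u(h)-u_{0}}_{X}\le e^{\omega h}\int_{0}^{h} e^{-\omega r}\norm{f(r)-v}_{X}\,\dr.
\]
Since $f\in BV(0,T;X)$ has a right limit $f(0+)$ at $0$ by Proposition~\ref{prop:BVX}, the integral average on the right tends to $\norm{f(0+)-v}_{X}$ as $h\to0+$; hence $\limsup_{h\to0+}h^{-1}\norm{u(h)-u_{0}}_{X}\le\norm{f(0+)-v}_{X}$, which produces the term $e^{\omega t}\norm{f(0+)-v}_{X}$ in the asserted bound.

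For the second term I would integrate by parts in $r$: setting $G_{h}(s):=\int_{0}^{s}\norm{f(r+h)-f(r)}_{X}\,\dr$, one obtains
\[
  \int_{0}^{t} e^{\omega(t-r)}\,\frac{\norm{f(r+h)-f(r)}_{X}}{h}\,\dr=\frac{G_{h}(t)}{h}+\omega\int_{0}^{t} e^{\omega(t-s)}\,\frac{G_{h}(s)}{h}\,\ds.
\]
By Proposition~\ref{prop:BVX}, $\norm{f(r+h)-f(r)}_{X}\le V_{f}(r+h)-V_{f}(r)$, from which $G_{h}(s)/h\le V_{f}(T)$ uniformly in $s$ and $h$; the integrand above is therefore dominated by the integrable constant $e^{\omega t}V_{f}(T)$ on $[0,t]$. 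The reverse Fatou lemma, together with $\limsup_{h\to0+}G_{h}(s)/h=\tilde V(f,s+)$, then gives
\[
  \limsup_{h\to0+}\int_{0}^{t} e^{\omega(t-r)}\,\frac{\norm{f(r+h)-f(r)}_{X}}{h}\,\dr\le\tilde V(f,t+)+\omega\int_{0}^{t} e^{\omega(t-s)}\tilde V(f,s+)\,\ds.
\]
Adding the two contributions yields the claimed inequality for $\limsup_{h\to0+}h^{-1}\norm{u(t+h)-u(t)}_{X}$. Finally, Lipschitz continuity on $[0,T]$ comes from the same estimates: $G_{h}(t)/h\le V_{f}(T)$ and, using $f\in L^{\infty}(0,T;X)$ (Proposition~\ref{prop:BVX} again), $\norm{u(h)-u_{0}}_{X}\le C\,h$ with $C$ independent of $h$, so the first display becomes $\norm{u(t+h)-u(t)}_{X}\le C'\,h$ with $C'$ independent of $t$ and $h$; combined with the continuity of $u$ this gives global Lipschitz continuity on $[0,T]$.

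The step I expect to demand the most care is the passage to the limit $h\to0+$ inside the integral of the second term: the difference quotients $h^{-1}\norm{f(r+h)-f(r)}_{X}$ need not be uniformly bounded --- only their averages $G_{h}(\cdot)/h$ are --- so one cannot interchange $\limsup$ and $\int$ naively. This is exactly where the $BV$ hypothesis is essential: the pointwise domination by increments of the monotone function $V_{f}$ makes the reverse Fatou lemma applicable, and the integration by parts above is precisely what turns the $e^{\omega(t-r)}$-weighted integral into the unweighted quantity $\tilde V(f,t+)$ plus the Gronwall-type correction $\omega\int_{0}^{t} e^{\omega(t-s)}\tilde V(f,s+)\,\ds$.
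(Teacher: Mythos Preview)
The paper does not give its own proof of this proposition; it is quoted verbatim from the literature (B\'enilan--Crandall--Pazy, Lemma~7.8) as a known result, so there is no argument to compare against. Your proof is correct and follows the standard line: compare the shifted trajectory $s\mapsto u(s+h)=T_{s}(u(h),f(h+\cdot))$ with $u$ via~\eqref{eq:20bis}, control $\norm{u(h)-u_{0}}_{X}$ by comparing with the constant solution $w\equiv u_{0}$ of $w'+Aw\ni v$ (this is exactly where $u_{0}\in D(A)$ enters), and pass to the limit in the $e^{\omega(t-r)}$-weighted integral by the integration-by-parts/reverse-Fatou argument using the uniform bound $G_{h}(s)/h\le V_{f}(T)$.

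One minor remark: your reduction to $\omega\ge 0$ yields Lipschitz continuity and suffices for every use of this proposition in the paper, but it produces a slightly weaker constant than the stated inequality when the original $\omega$ is negative. If you want the sharp form for $\omega<0$ as well, note that your integration-by-parts identity still holds; the second term $\omega\int_{0}^{t}e^{\omega(t-s)}G_{h}(s)/h\,\ds$ is then nonpositive, and one bounds the $\limsup$ of the sum by $\limsup G_{h}(t)/h + \limsup$ of the (nonpositive) second term, using that $\limsup(-c_{h})=-\liminf c_{h}\le -$limit via Fatou, which again gives exactly $\tilde V(f,t+)+\omega\int_{0}^{t}e^{\omega(t-s)}\tilde V(f,s+)\,\ds$.
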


With the preceding Theorem~\ref{cor:1bisF},
Proposition~\ref{acrethomog}, and Proposition~\ref{prop:Lipschitz-property} in mind, we
are now in the position to outline the proof of our main Theorem~\ref{thm:main1}.

\begin{proof}[Proof of Theorem~\ref{thm:main1}]
  We begin by noting that if $A$ is $m$-accretive in $X$ and $F$ is a
  Lipschitz continuous mapping with Lipschitz constant $\omega$, then
  the operator $A+F$ is $\omega$-quasi $m$-accretive in $X$; or in
  other words, $A+F+\omega I$ is $m$-accretive in $X$. Hence, for
  every $T>0$, there is a semigroup $\{T_{t}\}_{t=0}^{T}$ of
  mappings
  $T_{t}:\overline{D(A)}^{\mbox{}_{X}}\times L^{1}(0,T;X)\to
  \overline{D(A)}^{\mbox{}_{X}}$ satisfying~\eqref{eq:4}
  and~\eqref{eq:20bis} with $\omega$ and $L=1$. Further, the semigroup
  $\{\tilde{T}_{t}\}_{t=0}^{T}$ generated by $-A$ satisfies
  \eqref{eq:4} and~\eqref{eq:20bis} with $\omega=0$ and $L=1$,
  \eqref{eq:45} for every $u_{0}\in \overline{D(A)}^{\mbox{}_{X}}$ and
  $f\in L^{1}(0,T;X)$ with $\tilde{f}$ given by~\eqref{eq:43}, and by
  Proposition~\ref{acrethomog}, $\{\tilde{T}_{t}\}_{t=0}^{T}$
  satisfies \eqref{eq:21bis}. Now, let $u_{0}\in D(A)$ and
  $f\in BV(0,T;X)$. Then by Proposition~\ref{prop:Lipschitz-property},
  the mild solution $u(t):=T_{t}(u_{0},f)$, ($t\in [0,T]$), of Cauchy
  problem~\eqref{eq:10bis} is Lipschitz continuous on $[0,T]$. Thus,
  we can apply Theorem~\ref{cor:1bisF} and obtain that $u$
  satisfies~\eqref{eq:28}.
\end{proof}

In order the semigroup
$\{T_{t}\}_{t=0}^{T}$ generated by $-A$ satisfies regularity estimate~\eqref{eq:40}
(respectively,~\eqref{eq:18bis2}), one requires that each mild solution $u$
of~\eqref{eq:10bis} (respectively, of~\eqref{eq:10}) is
\emph{differentiable} at a.e. $t\in (0,T)$, or in other words, $u$ is
a \emph{strong solution} of~\eqref{eq:10bis}. The next definition is taken
from~\cite[Definition~1.2]{Benilanbook}
(cf~\cite[Chapter~4]{MR2582280}).

\begin{definition}\label{def:strong-sols}
  A locally absolutely continuous function $u [0,T] : \to X$ is called a \emph{strong
      solution} of the differential inclusion
    \begin{equation}
      \label{eq:31}
      \frac{\td u}{\dt}(t)+A(u(t))\ni f(t) \qquad\text{for
        a.e. $t\in (0,T)$,}
    \end{equation}
  if $u$ is differentiable a.e. on $(0,T)$, and for a.e. $t\in (0,T)$,
    $u(t)\in D(A)$ and $f(t)-\frac{\td u}{\dt}(t)\in
    A(u(t))$. Further, for given $u_{0}\in X$ and $f\in L^{1}(0,T;X)$,
    a function $u$ is called a \emph{strong
      solution} of Cauchy problem~\eqref{eq:10bis} if $u\in
    C([0,T];X)$, $u$ is strong solution of~\eqref{eq:31} and $u(0)=u_{0}$.
\end{definition}

The next characterization of strong solutions of~\eqref{eq:31}
highlights the important point of \emph{a.e. differentiability}.

\begin{proposition}[{\cite[Theorem~7.1]{Benilanbook}}]\label{propo:diff-mild}
  Let $X$ be a Banach space, $f\in L^{1}(0,T;X)$ and $A$ be
  quasi $m$-accretive in $X$. Then $u$ is a strong solution
  of the differential inclusion~\eqref{eq:31} on $[0,T]$ if and only
  if $u$ is a mild solution on $[0,T]$ and $u$ is ``absolutely
  continuous'' on $[0,T]$ and differentiable a.e. on $(0,T)$.
\end{proposition}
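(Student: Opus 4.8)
The plan is to pass between \emph{mild} and \emph{strong} solutions through the intermediate notion of a Bénilan \emph{integral solution}, after first reducing to the genuinely $m$-accretive case. Fix $\omega\in\R$ with $B:=A+\omega I$ $m$-accretive; since $u\in C([0,T];X)$ is bounded, $u$ solves \eqref{eq:31} with data $(A,f)$ in the strong (resp.\ mild) sense if and only if it solves $u'(t)+B(u(t))\ni\tilde f(t):=f(t)+\omega u(t)$, with $\tilde f\in L^{1}(0,T;X)$, in the same sense, and the inclusions $f(t)-u'(t)\in A(u(t))$ and $\tilde f(t)-u'(t)\in B(u(t))$ are pointwise equivalent. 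So we may and do assume $\omega=0$, i.e.\ $A$ is $m$-accretive, writing again $f$ for $\tilde f$. Throughout, $\langle y,x\rangle_{+}:=\lim_{h\to0^{+}}h^{-1}\bigl(\|x+hy\|_{X}-\|x\|_{X}\bigr)$ denotes the right bracket; recall that $y\mapsto\langle y,x\rangle_{+}$ is subadditive, $x\mapsto\langle y,x\rangle_{+}$ is upper semicontinuous, $\langle -p,\lambda p\rangle_{+}=-\|p\|_{X}$ for every $\lambda>0$, and accretivity of $A$ is equivalent to $\langle\hat v-v,\hat u-u\rangle_{+}\ge0$ for all $(u,v),(\hat u,\hat v)\in A$.

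For the forward implication (a strong solution is mild, absolutely continuous, and a.e.\ differentiable) only mildness requires proof, the rest being part of Definition~\ref{def:strong-sols}. I would show that a strong solution $u$ is an integral solution and then invoke uniqueness of integral solutions. Fix $(x,y)\in A$ and a point $t$ at which $u$ is differentiable and $v(t):=f(t)-u'(t)\in A(u(t))$. Accretivity applied to $(u(t),v(t))$ and $(x,y)$ gives, for $\lambda>0$, $\|u(t)-x\|_{X}\le\|u(t)-x+\lambda(v(t)-y)\|_{X}=\|u(t-\lambda)-x+\lambda(f(t)-y)\|_{X}+o(\lambda)$, since $u(t)-\lambda u'(t)=u(t-\lambda)+o(\lambda)$. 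Dividing by $\lambda$, taking $\limsup_{\lambda\to0^{+}}$, and using upper semicontinuity of $x\mapsto\langle f(t)-y,x\rangle_{+}$ at $u(t)$ yields $\tfrac{\td}{\dt}\|u(t)-x\|_{X}\le\langle f(t)-y,u(t)-x\rangle_{+}$ for a.e.\ $t$ (the left side exists a.e.\ because $t\mapsto\|u(t)-x\|_{X}$ is locally absolutely continuous). Integrating over $[s,t]$ is exactly the integral-solution inequality; since the mild solution of \eqref{eq:31} with data $(u_{0},f)$ is the unique integral solution with that data, $u$ coincides with it.

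For the converse, let $u$ be a mild solution of \eqref{eq:31} that is absolutely continuous on $[0,T]$ and differentiable a.e.; being mild it is an integral solution. Fix $(x,y)\in A$: the absolutely continuous function $t\mapsto\|u(t)-x\|_{X}$ then satisfies $\int_{s}^{t}\langle f(\tau)-y,u(\tau)-x\rangle_{+}\,\td\tau\ge\|u(t)-x\|_{X}-\|u(s)-x\|_{X}$ for all $s\le t$, hence $\tfrac{\td}{\dt}\|u(t)-x\|_{X}\le\langle f(t)-y,u(t)-x\rangle_{+}$ a.e. At a.e.\ $t$ where $u$ is also differentiable one has $\tfrac{\td}{\dt}\|u(t)-x\|_{X}=\langle u'(t),u(t)-x\rangle_{+}$, and subadditivity of the bracket in its first slot gives $\langle f(t)-u'(t)-y,\,u(t)-x\rangle_{+}\ge0$. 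Now I invoke the range condition: at such $t$ put $w:=f(t)-u'(t)$ and, for $\mu>0$, pick $(x_{\mu},y_{\mu})\in A$ with $x_{\mu}+\mu y_{\mu}=u(t)+\mu w$, so that $u(t)-x_{\mu}=\mu(y_{\mu}-w)$; testing the last inequality with $(x,y)=(x_{\mu},y_{\mu})$ and using $\langle -p,\mu p\rangle_{+}=-\|p\|_{X}$ forces $-\|y_{\mu}-w\|_{X}\ge0$, whence $y_{\mu}=w$, $x_{\mu}=u(t)$ and $(u(t),w)\in A$. Undoing the first-paragraph reduction, this reads $u(t)\in D(A)$ and $f(t)-u'(t)\in A(u(t))$ for a.e.\ $t$; together with $u\in C([0,T];X)$, $u(0)=u_{0}$, and the regularity hypotheses this is precisely Definition~\ref{def:strong-sols}.

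The main obstacle is a bookkeeping issue in the last step: the exceptional null set for ``$\tfrac{\td}{\dt}\|u(t)-x\|_{X}\le\langle f(t)-y,u(t)-x\rangle_{+}$ a.e.'' (and for ``$\|u(\cdot)-x\|_{X}$ differentiable a.e.'') depends on the test pair $(x,y)$, whereas the resolvent pair $(x_{\mu},y_{\mu})$ entering the conclusion depends on $t$. I would repair this by first establishing the pointwise inequality \emph{simultaneously} for the countable family of pairs $\bigl(J^{A}_{\mu}\xi,\;\mu^{-1}(\xi-J^{A}_{\mu}\xi)\bigr)$ with $\xi$ running over a countable dense subset of $\overline{D(A)}^{\mbox{}_{X}}$ and $\mu$ rational --- so the exceptional set becomes a single null set --- and then recovering the general conclusion by approximating $u(t)$ by such $\xi$ and letting $\mu\to0$, using continuity of the resolvents $J^{A}_{\mu}$ and of $u$ together with the $m$-accretive estimate $\|J^{A}_{\mu}z-z\|_{X}\to0$ for $z\in\overline{D(A)}^{\mbox{}_{X}}$. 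Apart from this interchange of limits (and the trivial degenerate case $u(t)=x$), everything is the standard bracket/resolvent calculus for $m$-accretive operators recorded in \cite{Benilanbook}, which also supplies the two facts used above: a mild solution is an integral solution, and integral solutions with prescribed data are unique.
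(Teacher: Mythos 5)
This proposition is quoted verbatim from \cite[Theorem~7.1]{Benilanbook}; the paper supplies no proof of its own, so there is nothing to compare your argument against except the classical one, which is exactly the route you take (integral solutions plus the bracket/maximality calculus). Your reduction to the $m$-accretive case, the forward direction, and the core of the converse (deriving $\langle f(t)-u'(t)-y,\,u(t)-x\rangle_{+}\ge 0$ and then testing with the resolvent pair of $\xi_{t}:=u(t)+\mu\bigl(f(t)-u'(t)\bigr)$ at a \emph{fixed} $\mu>0$) are all correct.

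The one genuine problem is in your proposed repair of the measurability bookkeeping, and it is twofold. First, the countable dense set must approximate the points $\xi_{t}=u(t)+\mu\,(f(t)-u'(t))$, which in general do \emph{not} lie in $\overline{D(A)}^{\mbox{}_{X}}$; a dense subset of $\overline{D(A)}^{\mbox{}_{X}}$ therefore does not reach the resolvent pairs you actually need. The correct choice is a countable dense subset of the closed separable subspace generated by the essential ranges of $u$, $f$ and $u'$ (all three are strongly measurable, hence essentially separably valued by Pettis), and then one uses that for fixed $t$ the set of pairs $(x,y)$ satisfying $\langle f(t)-u'(t)-y,\,u(t)-x\rangle_{+}\ge 0$ is closed in $X\times X$ (upper semicontinuity in the second slot, Lipschitz continuity in the first) together with continuity of $(\xi,\mu)\mapsto\bigl(J_{\mu}^{A}\xi,\mu^{-1}(\xi-J_{\mu}^{A}\xi)\bigr)$. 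Second, the limit $\mu\to 0$ you invoke is not only unnecessary (the identity $\langle -p,\mu p\rangle_{+}=-\|p\|_{X}$ already forces $y_{\mu}=f(t)-u'(t)$ for any single $\mu>0$) but would not close the argument if you replaced $\xi_{t}$ by approximants of $u(t)$: testing with $\bigl(J_{\mu}^{A}u(t),A_{\mu}u(t)\bigr)$ and letting $\mu\to 0$ only yields the bound $\|A_{\mu}u(t)\|_{X}\le\|f(t)-u'(t)\|_{X}$, i.e.\ membership of $u(t)$ in the generalized domain $\widehat{D(A)}$, which in a non-reflexive Banach space does not imply $u(t)\in D(A)$, let alone $f(t)-u'(t)\in A(u(t))$. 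Keep $\mu$ fixed, take the dense family in the right subspace, and the proof is complete.
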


Of course, every strong solution $u$ of~\eqref{eq:31} is a
  mild solution of~\eqref{eq:31}, and $u$ is absolutely continuous and
  differentiable a.e. on $[0,T]$. The differential
  inclusion~\eqref{eq:31} admits mild and Lipschitz continuous
  solutions if $A$ is $\omega$-quasi $m$-accretive in $X$
  (cf~\cite[Lemma~7.8]{Benilanbook}). But, in general, an absolutely continuous
  functions $u : [0,T]\to X$ is not necessarily
  differentiable a.e. on $(0,T)$. Only if one assumes additional
  geometric properties on $X$, then the latter implication holds true. Our next
definition is taken from~\cite[Definition~7.6]{Benilanbook}
(cf~\cite[Chapter~1]{MR2798103}).

\begin{definition}\label{def:Radon}
  A Banach space $X$ is said to have the \emph{Radon-Nikod\'ym property}
  if every absolutely continuous function $F : [a,b]\to X$, ($a$,
  $b\in \R$, $a<b$), is differentiable almost everywhere on $(a,b)$.
\end{definition}

Known examples of Banach spaces $X$ admitting the Radon-Nikod\'ym property are:
\begin{itemize}
\item {\bfseries (Dunford-Pettis)} if $X=Y^{\ast}$ is separable, where
  $Y^{\ast}$ is the dual space of a Banach space $Y$;
\item if $X$ is \emph{reflexive}.
\end{itemize}

We emphasize that $X_{1}=L^{1}(\Sigma,\mu)$,
$X_{2}=L^{\infty}(\Sigma,\mu)$, or $X_{3}=C(\mathcal{M})$ for a
$\sigma$-finite measure space $(\Sigma,\mu)$, or respectively, for a
compact metric space $(\mathcal{M},d)$ don't have, in general, the
Radon-Nikod\'ym property (cf~\cite{MR2798103}). Thus, it is quite
surprising that there is a class of operators $A$ (namely, the class of
\emph{completely accretive operators}, see
Section~\ref{sec:completely-accretive} below), for which the
differential inclusion~\eqref{eq:10bis} nevertheless admits strong
solutions (with values in $L^{1}(\Sigma,\mu)$ or $L^{\infty}(\Sigma,\mu)$).\bigskip

Now, by Corollary~\ref{cor:1bis} and
Proposition~\ref{propo:diff-mild}, we can conclude the following
results. We emphasize that one crucial point in the statement of
Corollary~\ref{cor:RN} below is that due to the uniform
estimate~\eqref{eq:39}, one has that for all initial values
$u_{0}\in \overline{D(A)}^{\mbox{}_{X}}$, the unique mild solution
$u$ of~\eqref{eq:10bis} is strong. % ,
% and not only for $u_{0}\in D(A)$.

\begin{corollary}\label{cor:RN}
  Suppose $A$ is a quasi $m$-accretive operator on a Banach space $X$
  admitting the Radon-Nikod\'ym property, and $\{T_{t}\}_{t=0}^{T}$ is
  the semigroup generated by $-A$ on
  $\overline{D(A)}^{\mbox{}_{X}}\times L^{1}(0,T;X)$. If $A$ is
  homogeneous of order $\alpha\neq 1$, then for every
  $u_{0}\in \overline{D(A)}^{\mbox{}_{X}}$ and $f\in W^{1,1}(0,T;X)$,
  the unique mild solution $u(t):=T_{t}(u_{0},f)$ of~\eqref{eq:10bis}
  is strong and $\{T_{t}\}_{t=0}^{T}$ satisfies~\eqref{eq:32} for
  a.e. $t\in (0,T)$.
\end{corollary}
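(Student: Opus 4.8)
The plan is to reduce everything to Corollary~\ref{cor:1bis}, whose estimate~\eqref{eq:32} already gives the claimed bound \emph{provided} one knows that $t\mapsto T_t(u_0,f)$ is differentiable at a.e.\ $t$; so the whole task is to establish this a.e.\ differentiability for arbitrary $u_0\in\overline{D(A)}^{X}$, first for $u_0\in D(A)$ and then by density. First I would recall that, since $A$ is quasi $m$-accretive, for every $u_0\in\overline{D(A)}^{X}$ and $f\in L^1(0,T;X)$ the mild solution $u(t)=T_t(u_0,f)$ of~\eqref{eq:10bis} exists and is unique, and that $\{T_t\}_{t=0}^{T}$ is a strongly continuous semigroup of $\omega$-quasi contractions, i.e.\ satisfies~\eqref{eq:20bis} with $L=1$. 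Fix now $u_0\in D(A)$ and $f\in W^{1,1}(0,T;X)\subseteq BV(0,T;X)$. By Proposition~\ref{prop:Lipschitz-property}, $u$ is Lipschitz continuous on $[0,T]$, hence absolutely continuous; since $X$ has the Radon-Nikod\'ym property, $u$ is differentiable at a.e.\ $t\in(0,T)$, and then Proposition~\ref{propo:diff-mild} shows $u$ is a strong solution of~\eqref{eq:10bis}. Because $A$ is homogeneous of order $\alpha\neq 1$, Corollary~\ref{cor:1bis} applies and yields~\eqref{eq:32} for $u=T_\cdot(u_0,f)$ and a.e.\ $t\in(0,T)$. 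This settles the case $u_0\in D(A)$.

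Next I would pass to an arbitrary $u_0\in\overline{D(A)}^{X}$ by choosing $u_0^{(n)}\in D(A)$ with $u_0^{(n)}\to u_0$ in $X$ and setting $u_n(t):=T_t(u_0^{(n)},f)$. By $\omega$-quasi contractivity, $\sup_{t\in[0,T]}\norm{u_n(t)-u(t)}_X\le e^{\omega T}\norm{u_0^{(n)}-u_0}_X\to 0$, so $u_n\to u$ uniformly on $[0,T]$. By the previous step each $u_n$ satisfies~\eqref{eq:32}, and the right-hand side of~\eqref{eq:32} depends on $u_0^{(n)}$ only through $\norm{u_0^{(n)}}_X$ and on $f$ only through the fixed quantities $\int_0^t e^{-\omega s}\norm{f(s)}_X\,\ds$ and $\int_0^t e^{-\omega s}\norm{f'(s)}_X\,s\,\ds$, both finite since $f\in W^{1,1}(0,T;X)$. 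Since $(\norm{u_0^{(n)}}_X)_n$ is bounded, for every $\delta\in(0,T)$ there is $C_\delta>0$ such that $\lnorm{\tfrac{\td}{\dt}_{\!+}u_n(t)}_X\le C_\delta$ for a.e.\ $t\in[\delta,T]$ and all $n$; as each $u_n$ is (from Step~1) absolutely continuous on $[\delta,T]$, this forces $\norm{u_n(t)-u_n(s)}_X\le C_\delta\,(t-s)$ for all $\delta\le s\le t\le T$ and all $n$. Passing to the uniform limit, $u$ is Lipschitz on $[\delta,T]$ with constant $C_\delta$; since $\delta$ was arbitrary, $u$ is locally Lipschitz, hence locally absolutely continuous, on $(0,T]$. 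The Radon-Nikod\'ym property then gives that $u$ is differentiable at a.e.\ $t\in(0,T)$, Proposition~\ref{propo:diff-mild} upgrades $u$ to a strong solution of~\eqref{eq:10bis}, and, the derivative now existing a.e., Corollary~\ref{cor:1bis} delivers~\eqref{eq:32} for $u=T_\cdot(u_0,f)$ and a.e.\ $t\in(0,T)$.

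The step I expect to be the main obstacle is precisely this last passage to a general initial value: a uniform limit of absolutely continuous functions need not be absolutely continuous, and in Banach spaces without the Radon-Nikod\'ym property (such as $L^1(\Sigma,\mu)$, $L^\infty(\Sigma,\mu)$, or $C(\mathcal M)$) it need not be differentiable at any point, so one cannot argue naively. The point that makes the argument work is that~\eqref{eq:32}/\eqref{eq:39} controls the difference quotients of the approximants $u_n$ \emph{uniformly in $n$ and locally uniformly in $t\in(0,T]$}, which is enough to transfer the local Lipschitz bound to the limit $u$; only then does the Radon-Nikod\'ym property enter, to convert local Lipschitz continuity into a.e.\ differentiability. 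The only routine bookkeeping is to verify that the right-hand side of~\eqref{eq:32} is bounded on each interval $[\delta,T]$, which is immediate from $f\in W^{1,1}(0,T;X)$, and to note that the constant $C_\delta$ above is $n$-independent because $\norm{u_0^{(n)}}_X\to\norm{u_0}_X$.
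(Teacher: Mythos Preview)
Your proof is correct and supplies exactly the details the paper omits (the paper simply says the proof is ``straightforward'' and gives none). Your two-step structure---first $u_0\in D(A)$ via Proposition~\ref{prop:Lipschitz-property}, the Radon--Nikod\'ym property, Proposition~\ref{propo:diff-mild}, and Corollary~\ref{cor:1bis}; then general $u_0\in\overline{D(A)}^{X}$ by density and passing uniform Lipschitz bounds to the limit---is the natural route and parallels the paper's proof of the related Corollary~\ref{cor:RN-Lipschitz-case}.

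One point worth noting: in the reflexive setting of Corollary~\ref{cor:RN-Lipschitz-case} the paper extracts a weak limit of $(\tfrac{\td u_n}{\dt})_n$ in $L^2(a,b;X)$, which uses reflexivity of $L^2(a,b;X)$. You cannot do that here since $X$ is only assumed to have the Radon--Nikod\'ym property, and your alternative---transferring the uniform Lipschitz constant $C_\delta$ from $u_n$ to $u$ via uniform convergence, then invoking RNP on the limit---is precisely the right substitute. The justification that $\lnorm{u_n(t)-u_n(s)}_X\le C_\delta(t-s)$ follows from $\lnorm{\tfrac{\td u_n}{\dt}}_X\le C_\delta$ a.e.\ uses that in RNP spaces absolutely continuous functions satisfy the fundamental theorem of calculus, which you implicitly rely on; this is standard and correct. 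The only cosmetic point is that Proposition~\ref{propo:diff-mild} asks for absolute continuity on $[0,T]$ while you obtain it on each $[\delta,T]$; but this matches Definition~\ref{def:strong-sols} (locally absolutely continuous) and is exactly how the paper itself argues in its proof of Corollary~\ref{cor:RN-Lipschitz-case}.
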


We omit the proof of Corollary~\ref{cor:RN} since it is straightforward.
Now, we are ready to show that the statement of
Corollary~\ref{cor:RN-Lipschitz-case} holds.

\begin{proof}[Proof of Corollary~\ref{cor:RN-Lipschitz-case}]
  First, let $u_{0}\in D(A)$. Then by
  Proposition~\ref{prop:Lipschitz-property}, the mild solution
  $u(t)=T_{t}(u_{0},f)$ of Cauchy problem~\eqref{eq:10bisF} is Lipschitz continuous on $[0,T)$, and since
  every reflexive Banach space $X$ admits the Radon-Nikod\'ym
  property, $u$ is differentiable a.e. on $(0,T)$. Thus,
  by Theorem~\ref{thm:main1}, there is a function $\psi\in
  L^{\infty}(0,T)$ such that $u$ satisfies
  \begin{displaymath}
    \lnorm{\frac{\td u}{\dt}_{\!\! +}\!\!(t)}_{X}
    \le\frac{1}{t}\!\! \left[\frac{e^{\omega
          t}+1}{\abs{1-\alpha}}\norm{u_{0}}_{X}+\psi(t)
      + \omega\!\!\int_{0}^{t}\left[\frac{e^{\omega s}+1}{\abs{1-\alpha}}\norm{u_{0}}_{X}+\psi(s)\right]
      e^{\omega (t-s)}\ds\right]
  \end{displaymath}
  for a.e. $t\in (0,T)$. Next, we square
  both sides of the last inequality and subsequently integrate over $(a,b)$ for given
  $0<a<b\le T$. Then, one finds that
  \begin{equation}
    \label{eq:59}
    \begin{split}
      \int_{a}^{b}\lnorm{\frac{\td u}{\dt}(t)}_{X}^{2}\dt&\le
      \int_{a}^{b}\frac{1}{t^2}\!\!  \left\{\frac{e^{\omega
            t}+1}{\abs{1-\alpha}}\norm{u_{0}}_{X}+\psi(t)\right. \\
    &\qquad \qquad\left. +
      \omega\!\!\int_{0}^{t}\left[\frac{e^{\omega
            s}+1}{\abs{1-\alpha}}\norm{u_{0}}_{X}+\psi(s)\right]
      e^{\omega (t-s)}\ds\right\}^2 \!\dt
  \end{split}
  \end{equation}
  Due to this estimate, we can now show that also for $u_{0}\in
  \overline{D(A)}^{\mbox{}_{X}}$, the corresponding mild solution $u$
  of~\eqref{eq:10bisF} is strong. To see this let $(u_{0,n})_{n\ge 1}\subseteq D(A)$ such that $u_{0,n}\to u_{0}$
  in $X$ as $n\to \infty$ and set $u_{n}=T_{t}u_{0,n}$ for all $n\ge
  1$. By~\eqref{eq:20bis} (which is
  satisfied with $L=1$ by all $u_{n}$), $(u_{n})_{n\ge 1}$ is a Cauchy
  sequence in $C([0,T];X)$. Hence, there is a function $u\in
  C([0,T];X)$ satisfying $u(0)=u_{0}$ and $u_{n}\to u$ in
  $C([0,T];X)$. In particular, one can show that $u$ is the unique
  mild solution of Cauchy problem~\eqref{eq:10bisF}. Now, by inequality~\eqref{eq:59},
  $\left(\td u_{n}/\dt\right)_{n\ge 1}$ is bounded in
  $L^{2}(a,b;X)$ for any $0<a<b\le T$. Since $X$ is reflexive, also $L^{2}(a,b;X)$ is
  reflexive and hence, there is a $v\in L^{2}(a,b;X)$ and a
  subsequence of $(u_{0,n})_{n\ge 1}$, which, for
  simplicity, we denote again by $(u_{0,n})_{n\ge 1}$, such that
  $\frac{\td u_{n}}{\dt}\rightharpoonup v$ weakly in
  $L^{2}(a,b;X)$ as $n\to +\infty$. Since $u_{n}\to u$ in
  $C([a,b];X)$, it follows by a standard argument that
  $v=\frac{\td u}{\dt}$ in the sense of vector-valued
  distributions $\mathcal{D}'((a,b);X)$. Since
  $\frac{\td u}{\dt}\in L^{2}(a,b;X)$, the mild solution
  $u$ of Cauchy problem~\eqref{eq:10bisF} is absolutely continuous on $(a,b)$, and since $X$
  is reflexive, $u$ is differentiable a.e. on $(a,b)$. Since
  $0<a<b<\infty$ were arbitrary,
  $\frac{\td u}{\dt}\in L^{1}_{loc}((0,\infty);X)$. 

  To see that $u$ satisfies inequality~\eqref{eq:29}, note that 
  for $\varepsilon>0$, the function $t\mapsto \tilde{u}(t):=u(t+\varepsilon)$
  on $[0,T-\varepsilon]$ satisfies the hypotheses of Theorem~\ref{thm:main1} with
  $\frac{\td \tilde{u}}{\dt}\in
  L^{1}([0,T-\varepsilon);X)$ and so, we find that
  \begin{displaymath}
    \begin{split}
\lnorm{\frac{\td \tilde{u}}{\dt}_{\!\! +}\!\!(t)}_{X}
    &\le\frac{1}{t}\!\! \left[\frac{e^{\omega
          t}+1}{\abs{1-\alpha}}\norm{u(\varepsilon)}_{X}+\psi(t)\right.\\
    &  \qquad \qquad\left.+ \omega\!\!\int_{0}^{t}
      \left[\frac{e^{\omega s}+1}{\abs{1-\alpha}}\norm{u(\varepsilon)}_{X}+\psi(s)\right]
      e^{\omega (t-s)}\ds\right]
    \end{split}
  \end{displaymath}
  for every $t\in (0,T-\varepsilon]$ and $\varepsilon\in (0,t)$. Sending $\varepsilon\to
  0+$ shows that $u$ satisfies~\eqref{eq:29}. Since
  $u_{0}\in \overline{D(A)}^{\mbox{}_{X}}$ was arbitrary, this
  completes the proof of this corollary.
\end{proof}

If the Banach space $X$ and its dual space $X^{\ast}$ are
\emph{uniformly convex} and $A+F$ is quasi $m$-accretive in $X$, then
(cf.,~\cite[Theorem~4.6]{MR2582280}) for every $u_{0}\in D(A)$,
$f\in W^{1,1}(0,T;X)$, the mild solution $u(t)=T_{t}(u_{0},f)$,
($t\in [0,T]$), of Cauchy problem~\eqref{eq:10bisF} is a strong one, $u$ is everywhere
differentiable from the right, $\frac{\td u}{\dt}_{\! +}$ is right
continuous, and
\begin{displaymath}
  \frac{\td u}{\dt}_{\!+}(t)+(A+F-f(t))^{\! \circ}u(t)=0\qquad\text{for
    every $t\ge 0$,}
\end{displaymath}
where for every $t\in [0,T]$, $(A+F-f(t))^{\! \circ}$ denotes the
\emph{minimal selection} of $A+F-f(t)$ defined by~\eqref{eq:65}. Thus,
under those assumptions on $X$ and by
Corollary~\ref{cor:RN-Lipschitz-case}, we can conclude the following two
corollaries. We begin by stating the inhomogeneous case.

\begin{corollary}\label{cor:Lipschitz-continuous-case}
  Suppose $X$ and its dual space $X^{\ast}$ are
  uniformly convex, for $\omega\in \R$, $A$ is an
  $\omega$-quasi $m$-accretive operator on $X$, and
  $\{T_{t}\}_{t=0}^{T}$ is the semigroup on
  $\overline{D(A)}^{\mbox{}_{X}}\times L^{1}(0,T;X)$ generated by $-A$. If $A$ is
  homogeneous of order $\alpha\neq 1$, then for every
  $u_{0}\in \overline{D(A)}^{\mbox{}_{X}}$ and $f\in W^{1,1}(0,T;X)$, the mild solution
  $u(t)=T_{t}(u_{0},f)$, ($t\in [0,T]$) of Cauchy problem~\eqref{eq:10bis} is
  strong and
  \begin{displaymath}
    %\label{eq:18bis2}
    \begin{split}
      \lnorm{(A+F-f(t))^{\! \circ}T_{t}(u_{0},f)}_{X}
    \le\frac{1}{t}\!\! \left[a(t)+ \omega\!\!\int_{0}^{t}a(s)
      e^{\omega (t-s)}\ds\right]
    \end{split}
    \end{displaymath}
    for every $t\in (0,T]$, where $a(t)$ is defined by~\eqref{eq:54}.
\end{corollary}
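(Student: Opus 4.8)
The plan is to derive the statement by combining Corollary~\ref{cor:RN-Lipschitz-case} with the finer differentiability theory available when both $X$ and $X^{\ast}$ are uniformly convex, recalled in the paragraph just before the statement: Corollary~\ref{cor:RN-Lipschitz-case} supplies the $L^{1}$ Aronson--B\'enilan bound on $\frac{\td u}{\dt}_{\!+}$ together with the fact that $u$ is strong, while~\cite[Theorem~4.6]{MR2582280} identifies $\frac{\td u}{\dt}_{\!+}(t)$ with $-(A+F-f(t))^{\circ}u(t)$.

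First I would check that Corollary~\ref{cor:RN-Lipschitz-case} applies. Uniform convexity of $X$ makes $X$ reflexive, hence $X$ has the Radon--Nikod\'ym property, and $W^{1,1}(0,T;X)\subseteq BV(0,T;X)$. Thus for every $u_{0}\in\overline{D(A)}^{\mbox{}_{X}}$ the mild solution $u(t)=T_{t}(u_{0},f)$ is strong and satisfies, for a.e.\ $t\in(0,T)$,
\[\lnorm{\frac{\td u}{\dt}_{\!\! +}\!(t)}_{X}\le\frac1t\left[a(t)+\omega\int_{0}^{t}a(s)\,e^{\omega(t-s)}\,\ds\right],\]
with $a(t)$ given by~\eqref{eq:54}. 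This is already the assertion, up to two gaps: the left-hand side is $\frac{\td u}{\dt}_{\!+}$ rather than $(A+F-f(t))^{\circ}u(t)$, and the inequality holds only almost everywhere.

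For the first gap I would invoke~\cite[Theorem~4.6]{MR2582280}: since $X$ and $X^{\ast}$ are uniformly convex, whenever the initial datum lies in $D(A)$ and $f\in W^{1,1}(0,T;X)$, the mild solution is everywhere differentiable from the right on $[0,T)$, $\frac{\td u}{\dt}_{\!+}$ is right continuous, and $\frac{\td u}{\dt}_{\!+}(t)+(A+F-f(t))^{\circ}u(t)=0$ for every $t$. For general $u_{0}\in\overline{D(A)}^{\mbox{}_{X}}$ one reduces to this case by a shift: since $u$ is strong, $u(t_{0})\in D(A)$ for a.e.\ $t_{0}>0$, so, given any $t>0$, one may pick such a $t_{0}\in(0,t)$ and apply the above to the translated problem on $[t_{0},T)$ with initial value $u(t_{0})\in D(A)$ and datum $f(t_{0}+\cdot)\in W^{1,1}$. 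It follows that $\frac{\td u}{\dt}_{\!+}(t)=-(A+F-f(t))^{\circ}u(t)$ and that $t\mapsto\lnorm{(A+F-f(t))^{\circ}u(t)}_{X}$ is right continuous on all of $(0,T)$.

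For the second gap, note that for $f\in W^{1,1}(0,T;X)$ a uniform‑integrability (Vitali) argument identifies the quantity $V_{0}(f,t)$ of~\eqref{eq:64} with $\int_{0}^{t}s\,\norm{f'(s)}_{X}\,\ds$, so $V_{0}(f,\cdot)$ is continuous, and hence, by inspection of~\eqref{eq:54}, so is the whole function $t\mapsto\frac1t[a(t)+\omega\int_{0}^{t}a(s)e^{\omega(t-s)}\,\ds]$ on $(0,T]$. An inequality between the right‑continuous function $\lnorm{(A+F-f(t))^{\circ}u(t)}_{X}=\lnorm{\frac{\td u}{\dt}_{\!+}(t)}_{X}$ and this continuous function, valid for a.e.\ $t$, then holds for every $t\in(0,T]$ (choose $t_{n}\downarrow t$ in the full-measure set where it is valid and pass to the limit), which is exactly the claimed estimate. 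The one point that needs genuine care, and which I expect to be the main obstacle, is the propagation of the minimal‑selection identity and of the right continuity to \emph{every} $t>0$ when $u_{0}$ is only in $\overline{D(A)}^{\mbox{}_{X}}$; the continuity of $V_{0}(f,\cdot)$ for $f\in W^{1,1}$ and the concluding limiting step are routine, and the remainder is a direct appeal to Corollary~\ref{cor:RN-Lipschitz-case} and~\cite[Theorem~4.6]{MR2582280}.
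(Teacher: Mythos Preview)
Your approach is correct and matches the paper's: the paper does not give an explicit proof of this corollary, stating only that it follows ``under those assumptions on $X$ and by Corollary~\ref{cor:RN-Lipschitz-case}'', together with~\cite[Theorem~4.6]{MR2582280} recalled just before the statement. Your proposal carries out precisely this combination, supplying the shift argument and the right-continuity upgrade from a.e.\ to every $t$ that the paper leaves implicit.
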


% Since every uniformly convex Banach space is reflexive, the proof of
% Corollary~\ref{coro:inhomogen} proceeds in the similar way as the one
% of Corollary~\ref{cor:RN-Lipschitz-case}.\medskip

Our next corollary considers the homogeneous case of Cauchy problem~\eqref{eq:10bis}.

\begin{corollary}
  Suppose $X$ and its dual space $X^{\ast}$ are uniformly convex, for
  $\omega\in \R$, $A$ is an $\omega$-quasi $m$-accretive operator on
  $X$, and $\{T_{t}\}_{t\ge 0}$ is the semigroup on
  $\overline{D(A)}^{\mbox{}_{X}}$ generated by $-A$. If $A$ is
  homogeneous of order $\alpha\neq 1$, then for every
  $u_{0}\in \overline{D(A)}^{\mbox{}_{X}}$, the mild solution
  $u(t)=T_{t}u_{0}$, ($t\ge 0$) of Cauchy problem~\eqref{eq:10} (for
  $f\equiv 0$) is a strong solution satisfying
  \begin{displaymath}
    \lnorm{(A+F-f(t))^{\! \circ}T_{t}u_{0}}_{X}
    \le\frac{e^{\omega t}+1}{\abs{1-\alpha}\,t}
    \left[1+\omega\int_{0}^{t}e^{\omega(t-s)}\,\ds\right]\, \norm{u_{0}}_{X}
  \end{displaymath}
  for every $t>0$.
\end{corollary}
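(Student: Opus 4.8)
The proposed proof obtains this statement as the special case $f\equiv 0$ of Corollary~\ref{cor:Lipschitz-continuous-case}. First I would note that the constant function $f\equiv 0$ belongs to $W^{1,1}(0,T;X)$ for every $T>0$, so that corollary applies and yields that $u(t)=T_{t}u_{0}=T_{t}(u_{0},0)$ is a strong solution of~\eqref{eq:10} on $[0,T]$ satisfying
\begin{displaymath}
  \lnorm{(A+F-f(t))^{\!\circ}T_{t}u_{0}}_{X}\le\frac{1}{t}\left[a(t)+\omega\int_{0}^{t}a(s)\,e^{\omega(t-s)}\,\ds\right]
\end{displaymath}
for every $t\in(0,T]$, where $a(\cdot)$ is the function defined in~\eqref{eq:54}. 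Since being a strong solution is a local property and $T>0$ is arbitrary, this already delivers the strong-solution assertion and a preliminary form of the estimate for all $t>0$.

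The second step is to evaluate $a(\cdot)$ along $f\equiv 0$: in~\eqref{eq:54} the variation term $V_{0}(f,\cdot)$ and both integrals containing $\norm{f}_{X}$ vanish, leaving $a(s)=\frac{1+e^{\omega s}}{\abs{1-\alpha}}\,\norm{u_{0}}_{X}$ for all $s\ge 0$. Substituting this into the bound above and factoring out $\norm{u_{0}}_{X}/\abs{1-\alpha}$, the right-hand side becomes
\begin{displaymath}
  \frac{\norm{u_{0}}_{X}}{\abs{1-\alpha}\,t}\left[(1+e^{\omega t})+\omega\int_{0}^{t}(1+e^{\omega s})\,e^{\omega(t-s)}\,\ds\right].
\end{displaymath}

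The last step is bookkeeping. Using the identity $\omega\int_{0}^{t}e^{\omega(t-s)}\,\ds=e^{\omega t}-1$ (valid for every $\omega\in\R$ by direct integration, read as a limit when $\omega=0$), one computes $\omega\int_{0}^{t}(1+e^{\omega s})e^{\omega(t-s)}\,\ds=(e^{\omega t}-1)+\omega t\,e^{\omega t}$, so the displayed bound equals $\frac{\norm{u_{0}}_{X}\,e^{\omega t}}{\abs{1-\alpha}\,t}\,(2+\omega t)$, while the same identity rewrites the right-hand side of the asserted inequality as $\frac{\norm{u_{0}}_{X}\,e^{\omega t}}{\abs{1-\alpha}\,t}\,(1+e^{\omega t})$. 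Hence the claim reduces to the elementary convexity estimate $1+\omega t\le e^{\omega t}$, which holds for all real $\omega t$. I do not expect a genuine obstacle here: all the analytic content --- a.e.\ differentiability of the mild solution, the minimal-selection representation of $\tfrac{\td u}{\dt}_{\!+}$, and the Gronwall step --- is already packaged in Corollary~\ref{cor:Lipschitz-continuous-case}, and what remains is this one elementary inequality; the only point worth checking is that the $a(\cdot)$ in Corollary~\ref{cor:Lipschitz-continuous-case} is the one from~\eqref{eq:54}, which for $f\equiv 0$ makes any ambiguity (e.g.\ between $V_{0}$ and $V_{\omega}$) immaterial since every $f$-dependent term drops out.
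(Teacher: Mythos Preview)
Your proposal is correct and is exactly the intended derivation: the paper gives no separate proof for this corollary, presenting it as the immediate specialization $f\equiv 0$ of Corollary~\ref{cor:Lipschitz-continuous-case}, which is precisely what you do. Your bookkeeping is accurate, and you correctly observe that the direct specialization yields the bound $\tfrac{e^{\omega t}(2+\omega t)}{\abs{1-\alpha}\,t}\norm{u_{0}}_{X}$, which is in fact slightly \emph{sharper} than the stated bound $\tfrac{e^{\omega t}(1+e^{\omega t})}{\abs{1-\alpha}\,t}\norm{u_{0}}_{X}$ via $1+\omega t\le e^{\omega t}$; so the asserted inequality follows a fortiori.
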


% The last corollary focuses on the case when the homogeneous operator
% $A$ is perturbed by a Lipschitz mapping $F$. Its statement follows
% from~\cite[Theorem~4.6]{MR2582280} and Corollary~\ref{cor:RN-Lipschitz-case}.

% \begin{corollary}
%   \label{cor:Lipschitz-continuous-case}
%   Suppose $X$ and its dual space $X^{\ast}$ are uniformly convex,
%   $F : X\to X$ be a Lipschitz continuous mapping with
%   Lipschitz-constant $\omega\ge 0$ satisfying $F(0)=0$, $A$ an
%   $m$-accretive operator on $X$, and $\{T_{t}\}_{t\ge 0}$ the
%   semigroup generated by $-(A+F)$ on
%   $\overline{D(A)}^{\mbox{}_{X}}$. If $A$ is homogeneous of order
%   $\alpha\neq 1$, then for every
%   $u_{0}\in \overline{D(A)}^{\mbox{}_{X}}$, the mild solution
%   $u(t)=T_{t}u_{0}$, ($t\ge 0$) of Cauchy problem~\eqref{eq:10bisF} is
%   a strong solution and
%   \begin{displaymath}
%        \lnorm{A^{\circ}T_{t}u_{0}}_{X}\le 
%        \frac{e^{\omega t}}{t}\frac{2\,\norm{u_{0}}_{X}}{\abs{1-\alpha}}\,\Bigg[ (1+\omega
%      t)+ \int_{0}^{t} (1+\omega
%      s)\, \omega\,e^{\omega(t-s)}\,\ds\Bigg]
%   \end{displaymath}
%   for every $t>0$.
% \end{corollary}

To conclude this section, we briefly outline the proof of Theorem~\ref{thm:2}.

\begin{proof}[Proof of Theorem~\ref{thm:2}]
  In the case $A$ is an $m$-accretive operator on a Banach lattice
  $X$ and $F$ a Lipschitz continuous perturbation with constant
  $\omega\ge 0$, then the statement of Theorem~\ref{thm:2} immediately
  follow  from Theorem~\ref{thm:2bis} and Corollary~\ref{cor:2bis}
  with constants $L=1$.
\end{proof}

%%%%%%%%%%%%%%%%%%%%%%%%%%%%%%%%%%%%%%%%%%%% ---------------------
%
%                 Completely Accretive Operators.
%
% --------------------------------------------------------------------
%%%%%%%%%%%%%%%%%%%%%%%%%%%%%%%%%%%%%%%%%%%%%
\section{Homogeneous completely accretive operators}
\label{sec:completely-accretive}

In \cite{MR1164641}, B\'enilan and Crandall introduced the class of
\emph{completely accretive} operators $A$ and showed: even though the
underlying Banach spaces does not admit the Radon-Nikod\'ym property,
but if $A$ is completely accretive and homogeneous of order $\alpha>0$
with $\alpha\neq 1$, then the mild solutions of differential
inclusion~\eqref{eq:10} involving $A$ are strong. This was extended
in~\cite{MR4031770} to the zero-order case. Here, we provide a
generalization to the case of completely accretive operators which are
homogeneous of order $\alpha\neq 1$ and perturbed by a Lipschitz
nonlinearity.
%\medskip
%

\subsection{General framework}
\label{subsec:gen-framework}

In order to keep this paper self-contained, we provide a brief
introduction to the class of completely accretive operators, where 
we mainly follow~\cite{MR1164641}  and the monograph~\cite{CoulHau2016}.\medskip

For the rest of this paper, suppose $(\Sigma, \mathcal{B}, \mu)$ is a $\sigma$-finite
measure space, and $M(\Sigma,\mu)$ the space of $\mu$-a.e. equivalent
classes of measurable functions $u : \Sigma\to \R$. For
$u\in M(\Sigma,\mu)$, we write $[u]^+$ to denote $\max\{u,0\}$ and
$[u]^-=-\min\{u,0\}$.  We denote by
$L^{q}(\Sigma,\mu)$, $1\le q\le \infty$, the corresponding standard
Lebesgue space with norm
\begin{displaymath}
  \norm{\cdot}_{q}=
  \begin{cases}
    \displaystyle\left(\int_{\Sigma}\abs{u}^{q}\,\textrm{d}\mu\right)^{1/q} &
    \text{if $1\le q<\infty$,}\\[7pt]
    \inf\Big\{k\in [0,+\infty]\;\Big\vert\;\abs{u}\le k\text{
      $\mu$-a.e. on $\Sigma$}\Big\}
    & \text{if $q=\infty$.}
  \end{cases}
\end{displaymath}
For $1\le q<\infty$, we
identify the dual space $(L^{q}(\Sigma,\mu))'$ with
$L^{q^{\mbox{}_{\prime}}}(\Sigma,\mu)$,
where $q^{\mbox{}_{\prime}}$ is the conjugate exponent of $q$ given by
$1=\tfrac{1}{q}+\tfrac{1}{q^{\mbox{}_{\prime}}}$.\medskip

Next, we first briefly recall the notion of \emph{Orlicz spaces}
(cf~\cite[Chapter 3]{MR1113700}). A continuous function
$\psi : [0,+\infty) \to [0,+\infty)$ is an \emph{$N$-function} if it
is convex, $\psi(s)=0$ if and only if $s=0$,
$\lim_{s\to 0+} \psi (s) / s = 0$, and
$\lim_{s\to\infty} \psi (s) / s = \infty$. Given an $N$-function
$\psi$, the \emph{Orlicz space} is defined as follows
  \begin{displaymath}
    L^\psi (\Sigma,\mu) := \Bigg\{ u \in M(\Sigma,\mu) \Bigg\vert\;
    \int_\Sigma
    \psi\left(\frac{|u|}{\alpha}\right)\; \td\mu <\infty \text{ for some }
    \alpha >0\Bigg\}
  \end{displaymath}
  and equipped with the \emph{Orlicz-Minkowski norm}
  \begin{displaymath}
    \norm{u}_{\psi} := \inf \Bigg\{ \alpha >0 \;\Bigg\vert\; \int_\Sigma \psi
    \left(\frac{|u|}{\alpha}\right)\; \td\mu\le 1 \Bigg\} .
  \end{displaymath}

With these preliminaries in mind, we are now in the position to recall the notation of
\emph{completely accretive} operators introduced in \cite{MR1164641} and
further developed to the \emph{$\omega$-quasi} case in~\cite{CoulHau2016}.\medskip

Let $J_{0}$ be the set given by
  \begin{displaymath}
    J_0 = \Big\{ j : \R \rightarrow
    [0,+\infty]\;\Big\vert\big. \text{$j$ is convex, lower
      semicontinuous, }j(0) = 0 \Big\}.
  \end{displaymath}
  Then, for every $u$, $v\in M(\Sigma,\mu)$, we write
  \begin{displaymath}
  u\ll v \quad \text{if and only if}\quad \int_{\Sigma} j(u)
  \,\td\mu \le \int_{\Sigma} j(v) \, \td\mu\quad\text{for all $j\in J_{0}$.}
\end{displaymath}

\begin{remark}
  Due to the interpolation result~\cite[Proposition~1.2]{MR1164641},
  for given $u$, $v\in M(\Sigma,\mu)$, the relation $u\ll v$ is
  equivalent to the two conditions
  \begin{displaymath}
    \begin{cases}
      \displaystyle\int_{\Sigma}[u-k]^{+}\,\td\mu&\le
      \int_{\Sigma}[v-k]^{+}\,\td\mu\qquad\text{for all $k>0$ and}\\[7pt]
       \displaystyle \int_{\Sigma}[u+k]^{-}\,\td\mu&\le
      \int_{\Sigma}[v+k]^{-}\,\td\mu\qquad\text{for all $k>0$.}
    \end{cases}
  \end{displaymath}
  By this characterization, it is clear that for every $u$, $v$, $w\in
  M(\Sigma,\mu)$, 
  \begin{equation}
    \label{eq:66}
    \text{if $u\ll v$ and $0\le w\le u$ then $w\ll v$.}
  \end{equation}
  Thus, the relation $\ll$ is closely related to the theory of
  rearrangement-invariant function spaces
  (cf~\cite{MR928802}). Another, useful characterization of the relation
  $\ll$ is the following (cf~\cite[Remark~1.5]{MR1164641}): for every
  $u$, $v\in M(\Sigma,\mu)$, one has that
  \begin{center}
    $u\ll v$ if and only if $u^+\ll v^+$ and $u^-\ll v^-$.
  \end{center}
\end{remark}

Further, the relation $\ll$ on $M(\Sigma,\mu)$ has the following
properties. We omit the easy proof of this proposition.

\begin{proposition}\label{propo:properties-of-ll}
  For every $u$, $v$, $w\in M(\Sigma,\mu)$, one has that
  \begin{enumerate}[(1)]
    \item\label{propo:properties-of-ll-claim0} $u^+ \ll u$, $u^{-}\ll -u$;
   \item\label{propo:properties-of-ll-claim1} $u\ll v$ if and only if $u^+\ll v^+$ and $u^-\ll v^-$;
   \item\label{propo:properties-of-ll-claim2} (\emph{positive homogeneity}) if $u\ll v$ then
    $\alpha u\ll \alpha v$ for all $\alpha>0$;
   \item\label{propo:properties-of-ll-claim3} (\emph{transitivity}) if $u\ll v$ and $v\ll w$ then $u\ll w$;
    \item\label{propo:properties-of-ll-claim4} if $u\ll v$ then
      $\abs{u}\ll \abs{v}$;
    \item\label{propo:properties-of-ll-claim5} (\emph{convexity}) for every $u\in M(\Sigma,\mu)$, the set
      $\{w\;\vert\,w\ll u \}$ is convex.
  \end{enumerate}
\end{proposition}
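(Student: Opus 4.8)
The plan is to deduce each of the six assertions directly from the definition of $\ll$, using only two mechanisms: a \emph{pointwise} (in)equality or convexity estimate that survives integration against $\mu$, and the replacement of the test function $j\in J_{0}$ by a suitably modified one still belonging to $J_{0}$. As a preliminary I would record that every $j\in J_{0}$ is nondecreasing on $[0,+\infty)$ and nonincreasing on $(-\infty,0]$: for $0\le s\le t$ one writes $s=(1-\tfrac{s}{t})\cdot0+\tfrac{s}{t}\cdot t$, so $j(0)=0$, $j\ge0$ and convexity give $j(s)\le\tfrac{s}{t}\,j(t)\le j(t)$, and symmetrically on the negative half-line. Together with the elementary fact that a convex and nondecreasing function composed with a convex function is again convex, this is the only tool needed to check that the auxiliary test functions introduced below indeed lie in $J_{0}$.

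I would then dispatch the three ``soft'' claims first. Transitivity is immediate: if $u\ll v$ and $v\ll w$, then $\int_{\Sigma}j(u)\,\td\mu\le\int_{\Sigma}j(v)\,\td\mu\le\int_{\Sigma}j(w)\,\td\mu$ for every $j\in J_{0}$. For the convexity of $\{w\mid w\ll u\}$, take $w_{1},w_{2}\ll u$ and $\lambda\in[0,1]$; the pointwise convexity of $j$ gives $j(\lambda w_{1}+(1-\lambda)w_{2})\le\lambda\,j(w_{1})+(1-\lambda)\,j(w_{2})$ $\mu$-a.e., and integrating together with $w_{1},w_{2}\ll u$ yields $\lambda w_{1}+(1-\lambda)w_{2}\ll u$. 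For positive homogeneity, for $\alpha>0$ the function $j_{\alpha}(s):=j(\alpha s)$ again lies in $J_{0}$, so $u\ll v$ forces $\int_{\Sigma}j(\alpha u)\,\td\mu=\int_{\Sigma}j_{\alpha}(u)\,\td\mu\le\int_{\Sigma}j_{\alpha}(v)\,\td\mu=\int_{\Sigma}j(\alpha v)\,\td\mu$. Finally, $u^{+}\ll u$ follows from the pointwise bound $j(u^{+})\le j(u)$ (equality on $\{u\ge0\}$, and $j(u^{+})=j(0)=0\le j(u)$ on $\{u<0\}$) by integration; applying this to $-u$ and using $(-u)^{+}=u^{-}$ gives $u^{-}\ll-u$.

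For the equivalence $u\ll v\iff(u^{+}\ll v^{+}\text{ and }u^{-}\ll v^{-})$, this is exactly the characterization recalled in the Remark preceding the proposition and established in~\cite[Remark~1.5]{MR1164641}. A self-contained route is to split an arbitrary $j\in J_{0}$ as $j=j_{1}+j_{2}$ with $j_{1}(s):=j(s^{+})$ and $j_{2}(s):=j(\min\{s,0\})$, to check $j_{1},j_{2}\in J_{0}$ via the composition rule above (and, likewise, that $s\mapsto j(-s)$ and $s\mapsto j(s^{-})$ also lie in $J_{0}$), and to observe that $j_{1}(u)=j(u^{+})$ and $j_{2}(u)=j(-u^{-})$ hold pointwise; each of the two implications then reduces to testing the hypothesis against one of these elements of $J_{0}$. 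The remaining claim $\lvert u\rvert\ll\lvert v\rvert$ now follows: since $u^{+}$ and $u^{-}$ have $\mu$-a.e. disjoint supports, $j(\lvert u\rvert)=j(u^{+})+j(u^{-})$ pointwise, hence $\int_{\Sigma}j(\lvert u\rvert)\,\td\mu=\int_{\Sigma}j(u^{+})\,\td\mu+\int_{\Sigma}j(u^{-})\,\td\mu\le\int_{\Sigma}j(v^{+})\,\td\mu+\int_{\Sigma}j(v^{-})\,\td\mu=\int_{\Sigma}j(\lvert v\rvert)\,\td\mu$ by the equivalence just used; alternatively, $\hat{\jmath}(s):=j(\lvert s\rvert)$ itself belongs to $J_{0}$ (by the monotonicity remark and the composition rule), and testing $u\ll v$ against $\hat{\jmath}$ gives the claim in one line.

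The only point that is not pure bookkeeping is verifying that the modified functions $j_{\alpha}$, $\hat{\jmath}$, $j_{1}$, $j_{2}$ and $s\mapsto j(s^{\pm})$ remain convex, which is exactly where the monotonicity remark and the composition rule are used; and the equivalence $u\ll v\iff u^{\pm}\ll v^{\pm}$ is the one assertion carrying genuine content, already available from~\cite[Proposition~1.2, Remark~1.5]{MR1164641}. Everything else is a one-line pointwise estimate followed by integration.
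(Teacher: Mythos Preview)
Your proof is correct. The paper itself omits the proof entirely (``We omit the easy proof of this proposition''), so there is no argument to compare against; you have supplied precisely the elementary verifications the paper declined to spell out, and your mechanisms---pointwise estimates plus replacing $j$ by a modified test function still in $J_{0}$---are the natural ones. One very minor point: for $j_{2}(s)=j(\min\{s,0\})$ the composition rule you stated (nondecreasing convex composed with convex) does not apply verbatim since $\min\{s,0\}$ is concave; the cleanest fix is to rewrite $j_{2}(s)=k(s^{-})$ with $k(t):=j(-t)$, which is convex and nondecreasing on $[0,\infty)$ by your monotonicity remark, so the rule then applies as stated.
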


With these preliminaries in mind, we can now state the following
definitions.

\begin{definition}\label{def:complete-contraction}
  A mapping $S : D(S)\to M(\Sigma,\mu)$ with domain $D(S)\subseteq
  M(\Sigma,\mu)$ is called a \emph{complete contraction} if
  \begin{displaymath}
    Su-S\hat{u}\ll u-\hat{u}\qquad\text{for every $u$, $\hat{u}\in D(S)$.}
  \end{displaymath}
  More generally, for $L\ge 1$, we call $S$ to be an
  \emph{$L$-complete contraction} if
  \begin{displaymath}
    L^{-1}Su-L^{-1}S\hat{u}\ll u-\hat{u}\qquad\text{for every $u$, $\hat{u}\in D(S)$,}
  \end{displaymath}
  and for some $\omega\in \R$, $S$ is called to be
  \emph{$\omega$-quasi completely contractive} if $S$ is an
  $L$-complete contraction with $L=e^{\omega t}$ for some $t\ge 0$.
\end{definition}

\begin{remark}
  \label{rem:1}
  Note, for every $1\le q<\infty$, $j_{q}(\cdot)=\abs{[\cdot]^{+}}^{q}\in \mathcal{J}_{0}$,
  $j_{\infty}(\cdot)=[[\cdot]^{+}-k]^{+}\in \mathcal{J}_{0}$ for every 
  $k\ge 0$ (and for large enough $k>0$ if $q=\infty$), and for every
  $N$-function $\psi$ and $\alpha>0$, 
  $j_{\psi,\alpha}(\cdot)=\psi(\frac{[\cdot]^{+}}{\alpha})\in \mathcal{J}_{0}$. This
  shows that for every $L$-complete
  contraction $S : D(S)\to M(\Sigma,\mu)$ with domain
  $D(S)\subseteq M(\Sigma,\mu)$, the mapping $L^{-1}S$ is order-preserving and
  contractive respectively for every $L^{q}$-norm ($1\le q\le \infty$), and
  every $L^{\psi}$-norm with $N$-function $\psi$.
\end{remark}

Now, we can state the
definition of completely accretive operators.

\begin{definition}
  \label{def:completely-accretive-operators}
  An operator $A$ on $M(\Sigma,\mu)$ is called \emph{completely
    accretive} if for every $\lambda>0$, the resolvent operator
  $J_{\lambda}$ of $A$ is a complete contraction, or equivalently, if
  for every $(u_1,v_1)$, $(u_{2},v_{2}) \in A$ and $\lambda >0$, one
  has that
  \begin{displaymath}
    u_1 - u_2 \ll u_1 - u_2 + \lambda (v_1 - v_2).
  \end{displaymath}
  If $X$ is a linear subspace of $M(\Sigma,\mu)$ and $A$ an operator
  on $X$, then $A$ is said to be \emph{$m$-completely accretive on $X$} if $A$ is
  completely accretive and satisfies the \emph{range
  condition}~\eqref{eq:11}.
Further, for $\omega\in \R$, an operator $A$ on a linear subspace
$X\subseteq M(\Sigma,\mu)$ is called \emph{$\omega$-quasi
  ($m$)-completely accretive} in $X$ if $A+\omega I$ is
($m$)-completely accretive in $X$. Finally, an operator $A$ on a
linear subspace $X\subseteq M(\Sigma,\mu)$ is called \emph{quasi
 ($m$-)completely accretive} if there is an $\omega\in \R$ such that
$A+\omega I$ is ($m$-)completely accretive in $X$.
\end{definition}

\begin{remark}
  For $\omega\in \R$, the fact that $A$ is $\omega$-quasi
  ($m$)-completely accretive in $X$ implies that
  the resolvent operator $J_{\lambda}^{A}$ of $A$ is
  $L$-completely contractive for $L=(1-\lambda\omega)^{-1}$ for every
  $\lambda>0$ satisfying $\lambda \omega<1$. Indeed, if $A$ is $\omega$-quasi
  ($m$)-completely accretive in $X$ then by taking 
  $L=(1-\lambda\omega)^{-1}$, one sees that
  \begin{align*}
    \int_{\Sigma}j\Big(L^{-1}(u_{1}-u_{2})\Big)\,\td\mu
    &= \int_{\Sigma}j_{L^{-1}}(u_{1}-u_{2})\,\td\mu\\
    &\le \int_{\Sigma}j_{L^{-1}}\Big(u_{1}-u_{2}
      +\tfrac{\lambda}{1-\lambda\omega}(\omega(u_{1}-u_{2})+v_{1}-v_{2})\Big)\,\td\mu\\
    &=\int_{\Sigma}j(u_{1}-u_{2}+\lambda(v_{1}-v_{2}))\,\td\mu
  \end{align*}
for every $(u_{1},v_{1})$, $(u_{1},v_{1})\in A$ and $\lambda>0$
satisfying $\lambda \omega<1$, where we used that
$j_{L^{-1}}(s):=j((1-\lambda w)s)$ belongs to $J_{0}$.
\end{remark}

This property transfers as follows to the semigroup $\{T_{t}\}_{t\ge 0}$

Before stating a useful characterization of quasi completely accretive
operators, we first need to introduce the following function
spaces. Let
\begin{displaymath}
  L^{1+\infty}(\Sigma,\mu):= L^1(\Sigma,\mu) +
  L^{\infty}(\Sigma,\mu)\;\text{ and }\;
  L^{1\cap \infty}(\Sigma,\mu):=L^{1}(\Sigma,\mu)\cap L^{\infty}(\Sigma,\mu)
\end{displaymath}
be the \emph{sum} and the \emph{intersection space} of
$L^1(\Sigma,\mu)$ and $L^{\infty}(\Sigma,\mu)$, which are
equip\-ped, respectively, with the norms
\begin{align*}
  \norm{u}_{1+\infty}&:= \inf \left\{ \Vert u_1 \Vert_1 + \Vert
  u_2 \Vert_{\infty} \Big\vert u = u_1 + u_2, \ u_1 \in L^1(\Sigma,\mu), u_2
  \in L^{\infty}(\Sigma,\mu) \right\},\\
    \norm{u}_{1 \cap \infty}&:= \max\big\{ \norm{u}_{1}, \norm{u}_{\infty}\big\}
\end{align*}
are Banach spaces. In fact, $L^{1+\infty}(\Sigma,\mu)$ and and
$L^{1\cap \infty}(\Sigma,\mu)$ are respectively the largest and the
smallest of the rearrangement-invariant Banach function spaces
(cf.,~\cite[Chapter~3.1]{MR928802}). If $\mu(\Sigma)$ is finite, then
$L^{1+\infty}(\Sigma,\mu)=L^{1}(\Sigma,\mu)$ with equivalent norms, but
if $\mu(\Sigma)=\infty$ then $L^{1+\infty}(\Sigma,\mu)$ contains
$\bigcup_{1\le q\le\infty}L^{q}(\Sigma,\mu)$. Further, we will employ
the space
\begin{displaymath}
  L_0(\Sigma,\mu):= \left\{ u \in M(\Sigma,\mu) \;\Big\vert\,\Big.
    \int_{\Sigma} \big[\abs{u}
   - k\big]^+\,\td\mu < \infty \text{ for all $k > 0$} \right\},
\end{displaymath}
which equipped with the $L^{1+\infty}$-norm is a closed subspace of
$L^{1+\infty}(\Sigma,\mu)$. In fact, one has (cf.,~\cite{MR1164641})
that
\begin{math}
  L_0(\Sigma,\mu) = \overline{L^1(\Sigma,\mu) \cap
  L^{\infty}(\Sigma,\mu)}^{\mbox{}_{1+\infty}}.
\end{math}
Since for every $k\ge 0$,
$T_{k}(s):=[\abs{s}-k]^+$ is a Lipschitz mapping $T_{k} : \R\to\R$ and
by Chebyshev's inequality, one sees that
$L^{q}(\Sigma,\mu)\hookrightarrow L_0(\Sigma,\mu)$ for every
$1\le q<\infty$ (and $q=\infty$ if $\mu(\Sigma)<+\infty$), and
$L^{\psi}(\Sigma,\mu)\hookrightarrow L_0(\Sigma,\mu)$ for every
$N$-function $\psi$.

\begin{proposition}[{\cite{CoulHau2016}}]
  \label{prop:completely-accretive}
   Let $P_{0}$ denote the set of all functions $T\in C^{\infty}(\R)$
   satisfying $0\le T'\le 1$ such that $T'$  is compactly supported,
   and $x=0$ is not contained in the
   support $\textrm{supp}(T)$ of $T$. Then for $\omega\in \R$,
   an operator $A \subseteq L_{0}(\Sigma,\mu)\times
  L_{0}(\Sigma,\mu)$ is $\omega$-quasi completely accretive if and only if
   \begin{displaymath}
     %\label{eq:charact-completelyaccretive}
     \int_{\Sigma}T(u-\hat{u})(v-\hat{v})\,\dmu
     +\omega \int_{\Sigma}T(u-\hat{u})(u-\hat{u})\,\dmu\ge 0
   \end{displaymath}
   for every $T\in P_{0}$ and every $(u,v)$, $(\hat{u},\hat{v})\in A$.
 \end{proposition}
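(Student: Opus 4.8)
The plan is to reduce the statement to the unperturbed characterisation of complete accretivity and then to a pointwise convexity inequality integrated against the functions $T\in P_{0}$. First I would invoke Definition~\ref{def:completely-accretive-operators}: $A$ is $\omega$-quasi completely accretive precisely when $B:=A+\omega I$ is completely accretive, and $(u,v)\in A$ if and only if $(u,v+\omega u)\in B$. Since the pairs $(u,v+\omega u),(\hat u,\hat v+\omega\hat u)$ range over $B$ as $(u,v),(\hat u,\hat v)$ range over $A$, and since
\[
  T(u-\hat u)\big((v+\omega u)-(\hat v+\omega\hat u)\big)
  =T(u-\hat u)(v-\hat v)+\omega\,T(u-\hat u)(u-\hat u),
\]
the asserted equivalence is exactly the $\omega=0$ case of itself applied to $B$; so it suffices to prove that $B\subseteq L_{0}(\Sigma,\mu)\times L_{0}(\Sigma,\mu)$ is completely accretive if and only if $\int_{\Sigma}T(u_{1}-u_{2})(v_{1}-v_{2})\,\dmu\ge 0$ for all $T\in P_{0}$ and all $(u_{1},v_{1}),(u_{2},v_{2})\in B$. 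I would also record at the outset the elementary integrability facts that carry the whole argument: for $T\in P_{0}$ (which vanishes on some $[-\delta,\delta]$, is bounded and $1$-Lipschitz) and $g\in L_{0}(\Sigma,\mu)$ one has $\mu(\{\abs{g}>\delta\})<\infty$, hence $T(g)\in L^{1}(\Sigma,\mu)\cap L^{\infty}(\Sigma,\mu)$; moreover $j(s):=\int_{0}^{s}T(r)\,\dr$ lies in $J_{0}$ and $0\le j(s)\le\norm{T}_{\infty}\,[\,\abs{s}-\delta\,]^{+}$, so $\int_{\Sigma}j(g)\,\dmu<\infty$. Consequently every integral written below is finite.

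For the implication ``completely accretive $\Rightarrow$ integral inequality'' I would fix $T\in P_{0}$, set $j=\int_{0}^{\cdot}T$ as above, and use that complete accretivity of $B$ gives $u_{1}-u_{2}\ll (u_{1}-u_{2})+\lambda(v_{1}-v_{2})$ for \emph{every} $\lambda>0$, so that
\[
  \frac{1}{\lambda}\left(\int_{\Sigma}j\big((u_{1}-u_{2})+\lambda(v_{1}-v_{2})\big)\,\dmu-\int_{\Sigma}j(u_{1}-u_{2})\,\dmu\right)\ge 0
  \qquad\text{for all }\lambda>0 .
\]
By convexity of $j$ the integrand is nonincreasing as $\lambda\downarrow 0$ and converges pointwise to $T(u_{1}-u_{2})(v_{1}-v_{2})$; since one member of the family is integrable, the monotone convergence theorem lets me pass to the limit, and nonnegativity is preserved, yielding $\int_{\Sigma}T(u_{1}-u_{2})(v_{1}-v_{2})\,\dmu\ge 0$. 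The subtle point here is that complete accretivity is available for \emph{all} $\lambda>0$ at once; that is exactly what keeps the inequality on the right side of the limit.

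For the converse, assume the integral inequality holds for every $T\in P_{0}$. With $j=\int_{0}^{\cdot}T$, convexity gives the pointwise bound $j(z_{\lambda})-j(u_{1}-u_{2})\ge T(u_{1}-u_{2})(z_{\lambda}-(u_{1}-u_{2}))=\lambda\,T(u_{1}-u_{2})(v_{1}-v_{2})$, where $z_{\lambda}:=(u_{1}-u_{2})+\lambda(v_{1}-v_{2})$, and integrating yields $\int_{\Sigma}j(u_{1}-u_{2})\,\dmu\le\int_{\Sigma}j(z_{\lambda})\,\dmu$ for every such $j$ and every $\lambda>0$. To upgrade this to $u_{1}-u_{2}\ll z_{\lambda}$ I would use the interpolation characterisation of $\ll$ from \cite[Proposition~1.2]{MR1164641} (the one recalled in the Remark following the definition of $\ll$), by which it is enough to check $\int_{\Sigma}[u_{1}-u_{2}-k]^{+}\,\dmu\le\int_{\Sigma}[z_{\lambda}-k]^{+}\,\dmu$ and $\int_{\Sigma}[u_{1}-u_{2}+k]^{-}\,\dmu\le\int_{\Sigma}[z_{\lambda}+k]^{-}\,\dmu$ for all $k>0$. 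I would then pick sequences $T_{n}\in P_{0}$ for which the associated $j_{n}=\int_{0}^{\cdot}T_{n}$ satisfy $0\le j_{n}\le[\cdot-k]^{+}$ and $j_{n}\to[\cdot-k]^{+}$ pointwise (respectively $0\le j_{n}\le[\cdot+k]^{-}$ and $j_{n}\to[\cdot+k]^{-}$), so that the dominated convergence theorem turns $\int_{\Sigma}j_{n}(u_{1}-u_{2})\,\dmu\le\int_{\Sigma}j_{n}(z_{\lambda})\,\dmu$ into the two required inequalities; the dominating functions lie in $L^{1}(\Sigma,\mu)$ exactly because the data lie in $L_{0}(\Sigma,\mu)$. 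Hence $u_{1}-u_{2}\ll z_{\lambda}$ for all $\lambda>0$, i.e. $B$ is completely accretive, and undoing the $\omega I$ substitution completes the proof.

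The step I expect to be the real work is not conceptual but technical: constructing the approximating families $T_{n}\in P_{0}$ for the truncations $[\cdot-k]^{+}$ and $[\cdot+k]^{-}$ so that they are simultaneously $C^{\infty}$, have slope in $[0,1]$, vanish near the origin, have compactly supported derivative, and still produce primitives $j_{n}$ that converge (and stay dominated) as above; and then justifying each interchange of limit and integral. This is precisely the place where the assumption $A\subseteq L_{0}(\Sigma,\mu)\times L_{0}(\Sigma,\mu)$, and the resulting $L^{1}\cap L^{\infty}$-bound on $T(u-\hat u)$, are used.
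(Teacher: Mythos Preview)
The paper does not give its own proof of this proposition; it is quoted from \cite{CoulHau2016} (with the case $\omega=0$ going back to \cite{MR1164641}). Your reduction to $\omega=0$ via $B=A+\omega I$, and your forward implication through the convex primitive $j=\int_0^{\cdot}T$ together with the monotone difference-quotient argument, are exactly the standard route and are correct, including the integrability bookkeeping (in particular $T(a)\in L^{1}\cap L^{\infty}$ and $T(a)b\in L^{1}$ because $b\in L_{0}$ and $T(a)$ is bounded with support of finite measure).

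There is, however, a genuine obstruction in the converse step as you have written it. You propose to choose $T_{n}\in P_{0}$ so that $j_{n}=\int_{0}^{\cdot}T_{n}$ satisfies $0\le j_{n}\le[\cdot-k]^{+}$ and $j_{n}\to[\cdot-k]^{+}$ pointwise. This is impossible under the constraint $0\le T_{n}'\le 1$: one has $j_{n}''=T_{n}'\le 1$, while pointwise convergence $j_{n}\to[\cdot-k]^{+}$ would force $j_{n}''$ to concentrate a unit mass near $s=k$; but $\int_{k-\varepsilon}^{k+\varepsilon}j_{n}''\,\ds\le 2\varepsilon$ for every $n$, which cannot tend to $1$. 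So no such sequence exists in $P_{0}$.

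The repair is a one-line observation you omit: the hypothesis $\int_{\Sigma}T(u_{1}-u_{2})(v_{1}-v_{2})\,\td\mu\ge 0$ is \emph{linear} in $T$, hence once it holds for all $T\in P_{0}$ it holds for every positive multiple $cT$, i.e.\ for every smooth nondecreasing $T$ with compactly supported derivative and $0\notin\mathrm{supp}(T)$, \emph{without} any upper bound on $T'$. In that enlarged class you can take $T_{n}$ with transition from $0$ to $1$ on $[k,k+1/n]$; then $T_{n}\le\mathbf{1}_{(k,\infty)}$, $j_{n}\le[\cdot-k]^{+}$, and $j_{n}\to[\cdot-k]^{+}$ pointwise, so dominated convergence (with dominator $[\cdot-k]^{+}$, integrable on $L_{0}$ data) gives $\int[a-k]^{+}\le\int[z_{\lambda}-k]^{+}$. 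The companion inequality for $[\cdot+k]^{-}$ is handled symmetrically. With this amendment your argument is complete.
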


\begin{remark}
  For convenience, we denote the unique extension of
  $\{T_{t}\}_{t\ge 0}$ on $L^\psi(\Sigma,\mu)$ or $L^1(\Sigma,\mu)$
  again by $\{T_{t}\}_{t\ge 0}$.
\end{remark}

\begin{definition}\label{def:normal-Banach-space}
  A Banach space $X\subseteq M(\Sigma,\mu)$ with norm
  $\norm{\cdot}_{X}$ is called \emph{normal} if the norm
  $\norm{\cdot}_{X}$ has the following property:
  \begin{displaymath}
    \begin{cases}
      & \text{for every $u\in X$, $v\in M(\Sigma,\mu)$ satisfying}\quad
        v\ll u,\\
      & \text{one has that}\quad v\in X\quad\text{ and }\quad\norm{v}_{X}\le
      \norm{u}_{X}.
    \end{cases}
  \end{displaymath}
\end{definition}

Typical examples of normal Banach spaces $X\subseteq M(\Sigma,\mu)$
are Orlicz-spaces $L^{\psi}(\Sigma,\mu)$ for every $N$-function
$\psi$, $L^{q}(\Sigma,\mu)$, ($1\le q\le \infty$),
$L^{1\cap \infty}(\Sigma,\mu)$, $L_0(\Sigma,\mu)$, and
$L^{1+\infty}(\Sigma,\mu)$.

\begin{remark}
  \label{rem:2}
  It is important to point out that if $X$ is a normal Banach space,
  then for every $u\in X$, one always has that $u^+$, $u^-$ and
  $\abs{u}\in X$. To see this, recall that
  by~\eqref{propo:properties-of-ll-claim0}
  Proposition~\ref{propo:properties-of-ll}, if $u\in X$, then $u^+\ll
  u$ and $u^-\ll -u$. Thus, $u^+$ and $u^-\in X$ and since
  $\abs{u}=u^+ +u^-$, one also has that $\abs{u}\in X$.
\end{remark}

The dual space
$(L_0(\Sigma,\mu))^{\mbox{}_{\prime}}$ of $L_0(\Sigma,\mu)$ is isometrically
isomorphic to the space $L^{1\cap \infty}(\Sigma,\mu)$. Thus, a sequence
$(u_{n})_{n\ge 1}$ in $L_0(\Sigma,\mu)$ is said to be \emph{weakly
  convergent} to $u$ in $L_0(\Sigma,\mu)$ if
\begin{displaymath}
  \langle v,u_{n}\rangle:=\int_{\Sigma} v\,u_{n}\,\td\mu\to
  \int_{\Sigma} v\,u\,\td\mu\qquad
  \text{for every $v\in L^{1\cap \infty}(\Sigma,\mu)$.}
\end{displaymath}
For the rest of this paper, we write $\sigma(L_{0},L^{1\cap\infty})$ to denote
the \emph{weak topology} on $L_0(\Sigma,\mu)$. For this weak topology,
we have the following compactness result.

\begin{proposition}[{\cite[Proposition~2.11]{MR1164641}}]
  \label{propo:compactness-in-L0}
  Let $u\in L_0(\Sigma,\mu)$. Then, the following statements hold.
  \begin{enumerate}[(1)]
   \item\label{propo:compactness-in-L0-claim1} The set
    \begin{math}
      \Big\{v\in M(\Sigma,\mu)\;\Big\vert\;v\ll u\Big\}
    \end{math}
    is $\sigma(L_{0},L^{1\cap\infty})$-sequentially compact in
    $L_0(\Sigma,\mu)$;
  \item\label{propo:compactness-in-L0-claim2} Let
    $X\subseteq M(\Sigma,\mu)$ be a normal Banach space satisfying
    $X\subseteq L_0(\Sigma,\mu)$ and
     \begin{equation}
       \label{eq:7}
       \begin{cases}
         &\text{for every $u\in X$,
           $(u_{n})_{n\ge 1}\subseteq M(\Sigma,\mu)$ with $u_{n}\ll u$
           for all $n\ge 1$}\\[7pt]
         &\text{and $\displaystyle\lim_{n\to+\infty}u_{n}(x)=u(x)$ $\mu$-a.e. on
           $\Sigma$, yields}
         \quad \displaystyle\lim_{n\to +\infty}u_{n}=u\text{ in $X$.}
       \end{cases}
     \end{equation}
     Then for every $u\in X$ and sequence
     $(u_{n})_{n\ge 1}\subseteq M(\Sigma,\mu)$ satisfying
     \begin{displaymath}
       u_{n}\ll u\text{ for
     all $n\ge 1$}\quad\text{and}\quad \lim_{n\to+\infty}u_{n}=u
     \text{ $\sigma(L_{0},L^{1\cap\infty})$-weakly in $X$,}
   \end{displaymath}
   one has that
     \begin{displaymath}
       \lim_{n\to +\infty}u_{n}=u\qquad\text{ in $X$.}
     \end{displaymath}
  \end{enumerate}
\end{proposition}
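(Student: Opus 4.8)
The plan is to handle the two assertions of this proposition separately, exploiting in both cases a $\sigma$-finite exhaustion $\Sigma=\bigcup_{j\ge1}\Sigma_j$ with $\Sigma_j\subseteq\Sigma_{j+1}$ and $\mu(\Sigma_j)<\infty$, together with the truncations $\tau_m(s):=\max\{-m,\min\{m,s\}\}$ and the tails $R_m(s):=s-\tau_m(s)$, for which $\abs{R_m(s)}=[\abs{s}-m]^{+}$. First I would record two elementary facts. (a) Every $j\in J_0$ is nondecreasing on $[0,\infty)$ and nonincreasing on $(-\infty,0]$ (a nonnegative convex $j$ with $j(0)=0$ satisfies $j(a)\le(a/b)\,j(b)\le j(b)$ for $0\le a\le b$); hence $j(\tau_m(s))\le j(s)$ for all $s$, so that $v\ll u$ implies $\tau_m(v)\ll u$ and $\tau_m(v)\mathds{1}_{\Sigma_j}\ll u$ for every $m>0$ and $j\ge1$. (b) If $u\in L_0(\Sigma,\mu)$ then $\delta_m:=\int_\Sigma[\abs{u}-m]^{+}\,\dmu\to0$ as $m\to\infty$ by dominated convergence, and consequently $\int_\Sigma\abs{R_m(v)}\,\dmu\le\delta_m$ \emph{uniformly} over all $v\ll u$. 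These are the two inputs that make the order interval $\{v\in M(\Sigma,\mu):v\ll u\}$ behave like a uniformly integrable set.

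For assertion (1), given $(v_n)$ with $v_n\ll u$, the functions $v_n^{j,m}:=\tau_m(v_n)\mathds{1}_{\Sigma_j}$ are bounded by $m$ and supported in the finite-measure set $\Sigma_j$, hence bounded and uniformly integrable in $L^1(\Sigma_j)$; by the Dunford--Pettis theorem together with a diagonal extraction over the countably many pairs $(j,m)$, I pass to a subsequence along which $v_n^{j,m}\rightharpoonup g^{j,m}$ weakly in $L^1(\Sigma_j)$ for all $j$, $m$. Since $\norm{v_n^{j,m'}-v_n^{j,m}}_{L^1(\Sigma_j)}\le\delta_m$ for all $m'>m$ uniformly in $n$, weak lower semicontinuity of the norm gives $\norm{g^{j,m'}-g^{j,m}}_{L^1(\Sigma_j)}\le\delta_m$, so $(g^{j,m})_m$ is Cauchy in $L^1(\Sigma_j)$; its limit is consistent in $j$ and defines some $v_\infty\in M(\Sigma,\mu)$. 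From $v_n^{j,m}\ll u$, convexity and strong (hence weak) lower semicontinuity of $v\mapsto\int_\Sigma[v-k]^{+}\,\dmu$ on $L^1(\Sigma)$ yield $g^{j,m}\ll u$, then $v_\infty\mathds{1}_{\Sigma_j}\ll u$ as a strong $L^1$-limit, and finally $v_\infty\ll u$ by Fatou as $j\to\infty$; in particular $v_\infty\in L_0(\Sigma,\mu)$. To conclude $v_n\rightharpoonup v_\infty$ in $\sigma(L_{0},L^{1\cap\infty})$ I test against $w\in L^{1\cap\infty}(\Sigma,\mu)$ and telescope $v_n-v_\infty$ through $\tau_m(v_n)$, $\tau_m(v_n)\mathds{1}_{\Sigma_j}$, $g^{j,m}$, $v_\infty\mathds{1}_{\Sigma_j}$: the tail differences contribute at most $\norm{w}_\infty\,\delta_m$, the difference $\tau_m(v_n)-\tau_m(v_n)\mathds{1}_{\Sigma_j}$ contributes at most $m\int_{\Sigma\setminus\Sigma_j}\abs{w}\,\dmu$ (which tends to $0$ as $j\to\infty$ since $w\in L^1$), and $\int_\Sigma(v_n^{j,m}-g^{j,m})\,w\,\dmu\to0$ by weak $L^1(\Sigma_j)$-convergence; choosing $m$, then $j$, then $n$ large in that order closes it.

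For assertion (2), the subsequence principle reduces the claim to extracting from an arbitrary subsequence of $(u_n)$ a further subsequence converging to $u$ \emph{in $X$}; by the hypothesis \eqref{eq:7} and the normality of $X$ this holds once the further subsequence converges $\mu$-a.e.\ to $u$, which in turn follows from $u_n\to u$ in measure on every finite-measure set. To obtain the latter, I first observe that for every $j\in J_0$ the functional $v\mapsto\int_\Sigma j(v)\,\dmu$ is convex and strongly (hence $\sigma(L_{0},L^{1\cap\infty})$-) lower semicontinuous on $L_0(\Sigma,\mu)$, so that $u_n\rightharpoonup u$ together with $\int_\Sigma j(u_n)\,\dmu\le\int_\Sigma j(u)\,\dmu$ forces $\int_\Sigma j(u_n)\,\dmu\to\int_\Sigma j(u)\,\dmu$ for \emph{every} $j\in J_0$. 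This convergence is then converted into convergence in measure on a finite-measure set $E$ by a Visintin-type argument: for a strictly convex differentiable $j\in J_0$ one integrates the subgradient inequality $j(u_n)-j(u)-j'(u)(u_n-u)\ge0$, notes that the cross term vanishes by the weak convergence (the relevant truncation of $j'(u)$ lying in $L^{1\cap\infty}$), and deduces that the nonnegative integrand tends to $0$ in $L^1$, hence $u_n\to u$ in measure — but only after localizing to the finite-measure region $\{\delta<\abs{u}\le R\}$ and treating $\{\abs{u}\le\delta\}$ and $\{\abs{u}>R\}$ by a direct one-signed estimate built from $v\ll u$ and $\delta_m\to0$. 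Letting $\delta\to0$, $R\to\infty$, $E\uparrow\Sigma_j$ gives convergence in measure on each $\Sigma_j$; a subsequence then converges $\mu$-a.e., and \eqref{eq:7} together with the subsequence principle yields $u_n\to u$ in $X$.

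I expect the genuine obstacle to lie in that last step of (2): a single $j\in J_0$ cannot be both strictly convex on the whole range of $u$ and satisfy $\int_\Sigma j(u)\,\dmu<\infty$ when $\mu(\Sigma)=\infty$ and $u\notin L^1(\Sigma,\mu)$ (for instance when $\abs{u}$ is bounded away from $0$ on a set of infinite measure), which is exactly why the Visintin argument must be restricted to the finite-measure region $\{\abs{u}>\delta\}$ and supplemented, where $\abs{u}$ is small, by an estimate using only the domination $v\ll u$. Assertion (1), by contrast, is technically longer but essentially routine once the uniform tail bound $\delta_m\to0$ and the diagonal extraction are in place — the only delicate bookkeeping there being to ensure the candidate limit inherits $v_\infty\ll u$, which is taken care of by the lower-semicontinuity argument above.
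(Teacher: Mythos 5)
The paper offers no proof of this proposition — it is quoted verbatim from B\'enilan--Crandall \cite[Proposition~2.11]{MR1164641} — so your argument has to stand on its own. Part (1) does: the reduction via truncations and a $\sigma$-finite exhaustion to Dunford--Pettis on $L^1(\Sigma_j)$, the uniform tail bound $\int_\Sigma[\abs{v}-m]^+\,\dmu\le\delta_m$ for all $v\ll u$, the diagonal extraction, and the identification of the limit through weak lower semicontinuity of $v\mapsto\int_\Sigma[v-k]^+\,\dmu$ are exactly the right ingredients, and your telescoping estimate closes the weak convergence correctly.

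Part (2) has a genuine gap, and it sits exactly where you placed your confidence rather than where you placed your worry. First, the premise behind your localization is false: for $u\in L_0(\Sigma,\mu)$ one has $\mu(\{\abs{u}>\delta\})<\infty$ for every $\delta>0$ (Chebyshev applied to $[\abs{u}-\delta/2]^+$), so $\abs{u}$ can never be bounded away from $0$ on a set of infinite measure; moreover, writing $\int_\Sigma j(\abs{u})\,\dmu=\int_0^\infty j'(s)\,\mu(\{\abs{u}>s\})\,\td s$, a de la Vall\'ee Poussin--type construction always yields a strictly convex $j\in J_0$ (with $j'$ bounded, strictly increasing, and vanishing at $0$ fast enough relative to the distribution function of $u$) satisfying $\int_\Sigma j(u)\,\dmu<\infty$. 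Second, and more seriously, the mechanism you propose for the region $\{\abs{u}\le\delta\}$ — ``a direct one-signed estimate built from $v\ll u$ and $\delta_m\to0$'' — cannot work. The relation $\ll$ compares only distribution functions and is blind to location: $u_n\ll u$ together with $\delta_m\to0$ is perfectly consistent with $u_n$ carrying a fixed amount of mass at height $\ge\varepsilon$ inside $\{u=0\}$ for every $n$; the only bound those hypotheses give is $\mu(\{\abs{u_n}>\varepsilon\})\le\tfrac{2}{\varepsilon}\int_\Sigma[\abs{u}-\varepsilon/2]^+\,\dmu$, which is finite but not small. What excludes this scenario is the weak convergence: from $\int_\Sigma[\abs{u_n}-k]^+\,\dmu\to\int_\Sigma[\abs{u}-k]^+\,\dmu$ (your Step A with $j=[\abs{\cdot}-k]^+$) and weak lower semicontinuity of $v\mapsto\int_{\Sigma\setminus E'}[\abs{v}-k]^+\,\dmu$, one obtains $\limsup_n\int_{E'}[\abs{u_n}-\varepsilon/2]^+\,\dmu\le\int_{E'}[\abs{u}-\varepsilon/2]^+\,\dmu=0$ for $E'=E\cap\{\abs{u}\le\delta\}$ with $\delta<\varepsilon/4$, and Chebyshev then gives convergence in measure there. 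The same ``subtract the complement and use lower semicontinuity'' device is also needed to justify the step $\limsup_n\int_E j(u_n)\,\dmu\le\int_E j(u)\,\dmu$ in your localized Visintin argument, which you assert but do not prove. With these repairs — or, more economically, by running the Visintin argument globally with the adapted strictly convex $j$ above — part (2) closes; as written, it does not.
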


Note, examples of normal Banach spaces $X\subseteq L_0(\Sigma,\mu)$
satisfying~\eqref{eq:7} are $X=L^{p}(\Sigma,\mu)$ for $1\le p<\infty$
and $L_{0}(\Sigma,\mu)$.\medskip

To complete this preliminary section, we state the following Proposition
summarizing statements from~\cite{CoulHau2016}, which we will need in the sequel
(cf.,~\cite{MR1164641} for the case $\omega=0$).

\begin{proposition}\label{propo:properties-of-St}
  For $\omega\in \R$, let $A$ be $\omega$-quasi completely accretive in
  $L_{0}(\Sigma,\mu)$.
  \begin{enumerate}[(1.)]
  \item\label{propo:properties-of-St-claim1} If there is a $\lambda_{0}>0$ such that
    $\textrm{Rg}(I+\lambda A)$ is dense in $L_{0}(\Sigma,\mu)$, then
    for the closure $\overline{A}^{\mbox{}_{L_{0}}}$ of $A$ in $L_{0}(\Sigma,\mu)$ and
    every normal Banach space with $X\subseteq L_0(\Sigma,\mu)$, the
    restriction $\overline{A}_{X}^{\mbox{}_{L_{0}}}:=\overline{A}^{\mbox{}_{L_{0}}}\cap(X\times X)$ of
    $A$ on $X$ is the unique $\omega$-quasi $m$-completely accretive extension of the
    part $A_{X}=A \cap(X\times X)$ of $A$ in $X$.

  \item For a given normal Banach space $X\subseteq L_0(\Sigma,\mu)$,
    and $\omega\in \R$, suppose $A$ is $\omega$-quasi $m$-completely accretive in $X$, and
    $\{T_{t}\}_{t\ge 0}$ be the semigroup generated by $-A$ on
    $\overline{D(A)}^{\mbox{}_{X}}$. Further, let $\{S_{t}\}_{t\ge 0}$
    be the semigroup generated by $-\overline{A}^{\mbox{}_{L_{0}}}$, where
    $\overline{A}^{\mbox{}_{L_{0}}}$ denotes the closure of $A$ in
    $\overline{X}^{\mbox{}_{L_{0}}}$. Then, the following
    statements hold.
    \begin{enumerate}[(a)]
    \item\label{propo:properties-of-St-claim1} The semigroup
      $\{S_{t}\}_{t\ge 0}$ is $\omega$-quasi completely contractive on
      $\overline{D(A)}^{\mbox{}_{L_{0}}}$, $T_{t}$ is the
      restriction of $S_{t}$ on $\overline{D(A)}^{\mbox{}_{X}}$,
      $S_{t}$ is the closure of $T_{t}$ in $L_{0}(\Sigma,\mu)$,
      and
      \begin{equation}
        \label{eq:exp-form-St}
        S_{t}u_{0}= L_{0}-\lim_{n\to+\infty}
        \left(I+\frac{t}{n}A\right)^{-n}u_{0}\quad\text{for all $u_{0}\in \overline{D(A)}^{\mbox{}_{L_{0}}}\cap X$;}
      \end{equation}
    \item\label{propo:properties-of-St-claim5} If there exists
      $u\in L^{1\cap \infty}(\Sigma,\mu)$ such that the orbit
      $\{T_{t}u\,\vert\,t\ge 0\}$ is locally bounded on $\R_+$ with
      values in $L^{1\cap\,\infty}(\Sigma,\mu)$, then, for every
      $N$-function $\psi$, the semigroup $\{T_{t}\}_{t\ge 0}$ can be
      extrapolated to a strongly continuous, order-preserving
      semigroup of $\omega$-quasi contractions on
      $\overline{\overline{D(A)}^{\mbox{}_{X}}\cap
        L^{1\cap\,\infty}(\Sigma,\mu)}^{\mbox{}_{L^\psi}}$
      (respectively, on $\overline{\overline{D(A)}^{\mbox{}_{X}}\cap
        L^{1\cap\,\infty}(\Sigma,\mu)}^{\mbox{}_{L^1}}$), and to an
      order-preserving semigroup of $\omega$-quasi contractions on
      $\overline{\overline{D(A)}^{\mbox{}_{X}}\cap
        L^{1\cap\,\infty}(\Sigma,\mu)}^{\mbox{}_{L^\infty}}$. We
      denote each extension of $T_{t}$ on on those spaces again by
      $T_{t}$.

    \item\label{propo:properties-of-St-claim2} The restriction
      $A_{X}:=\overline{A}^{\mbox{}_{L_{0}}}\cap (X\times X)$ of $\overline{A}^{\mbox{}_{L_{0}}}$ on $X$
      is the unique $\omega$-quasi $m$-complete extension of $A$ in
      $X$; that is, $A=A_{X}$.
    \item\label{propo:properties-of-St-claim3} The operator $A$ is
      sequentially closed in $X\times X$ equipped with the
      relative\newline
      $(L_0(\Sigma,\mu)\times
      (X,\sigma(L_{0},L^{1\cap\infty})))$-topology.
    \item\label{propo:properties-of-St-claim3bis} The domain of $A$ is
      characterized by
      \begin{displaymath}
        \mbox{}\qquad D(A)=\Bigg\{u\in \overline{D(A)}^{\mbox{}_{L_{0}}}\cap
        X\,\Bigg\vert\;
        \begin{array}[c]{l}
          \exists\;v\in X\text{ such that }\\
          e^{-\omega t}\frac{S_{t}u-u}{t}\ll v\text{ for all small $t>0$}
        \end{array}
        \Bigg\};
      \end{displaymath}
      % \item\label{propo:properties-of-St-claim3} For every $t\ge0$,
      %   one has $S_{t}(D(A))\subseteq D(A)$;
    \item\label{propo:properties-of-St-claim4} For every $u\in D(A)$,
      one has that
      \begin{equation}
        \label{eq:limit-infinit-gen-in-omega-A}
        \lim_{t\to 0+}\frac{S_{t}u-u}{t}=-A^{\!
          \circ}u\qquad\text{strongly in $L_{0}(\Sigma,\mu)$.}
      \end{equation}
      % where the minimal selection $A^{\! \circ}$ of $A$, which is
      % single-valued and $D(A^{\! \circ})=D(A)$.
    \end{enumerate}
  \end{enumerate}
\end{proposition}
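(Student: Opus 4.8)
The plan is to assemble Proposition~\ref{propo:properties-of-St} from the corresponding statements collected in~\cite{CoulHau2016} (and~\cite{MR1164641} for the contractive case $\omega=0$), the only additional bookkeeping being the factor $e^{\omega t}$ produced by the $\omega$-quasi hypothesis. The single mechanism driving everything is the observation in the remark following Definition~\ref{def:completely-accretive-operators}: if $A$ is $\omega$-quasi completely accretive, then each resolvent $J^{A}_{\lambda}$ is an $L$-complete contraction with $L=(1-\lambda\omega)^{-1}$, hence so is every power $(J^{A}_{\lambda})^{n}$ with $L=(1-\lambda\omega)^{-n}$; choosing $\lambda=t/n$ and letting $n\to\infty$ in the Crandall--Liggett exponential formula then yields a family $\{S_{t}\}_{t\ge 0}$ which is $e^{\omega t}$-complete contractive, that is, $\omega$-quasi completely contractive, on $\overline{D(A)}^{\mbox{}_{L_{0}}}$. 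This proves the first assertion of part~(2a) and~\eqref{eq:exp-form-St}, the latter being just that formula read in $L_{0}(\Sigma,\mu)$.

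For part~(1) and part~(2c) I would argue as classically: density of $\mathrm{Rg}(I+\lambda_{0}A)$ together with complete accretivity gives that the $L_{0}$-closure $\overline{A}^{\mbox{}_{L_{0}}}$ is $m$-completely accretive in $L_{0}(\Sigma,\mu)$, and intersecting with $X\times X$ for a normal Banach space $X\subseteq L_{0}(\Sigma,\mu)$ produces an $\omega$-quasi $m$-completely accretive operator whose uniqueness as an extension of $A_{X}$ follows because two complete contractions on $X$ agreeing on a $\norm{\cdot}_{X}$-dense set coincide (normality of $X$ being used to stay inside $X$ under $\ll$). The remaining claims of part~(2a) are then immediate: the resolvents of $A$ in $X$ are the restrictions of those of $\overline{A}^{\mbox{}_{L_{0}}}$ by this uniqueness, so $T_{t}$ is the restriction of $S_{t}$, and conversely $S_{t}$ is the $L_{0}$-continuous extension of $T_{t}$ since $\overline{D(A)}^{\mbox{}_{X}}$ is dense in $\overline{D(A)}^{\mbox{}_{L_{0}}}$.

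For part~(2b): by Remark~\ref{rem:1} the complete contraction $e^{-\omega t}S_{t}$ is order-preserving and a $\norm{\cdot}_{q}$-contraction for every $1\le q\le\infty$ and a $\norm{\cdot}_{\psi}$-contraction for every $N$-function $\psi$ on $\overline{D(A)}^{\mbox{}_{X}}\cap L^{1\cap\infty}(\Sigma,\mu)$; local boundedness of a single orbit in $L^{1\cap\infty}$ controls the orbits of all initial data in that set, so the contraction estimates extend $T_{t}$ by density to the respective closures, with strong continuity on the $L^{\psi}$- and $L^{1}$-closures obtained by dominated convergence through the $\ll$-relation, while on the $L^{\infty}$-closure only order preservation survives. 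Part~(2d) is proved by passing to the limit in the inequality of Proposition~\ref{prop:completely-accretive}: if $(u_{n},v_{n})\in A$ with $u_{n}\to u$ in $L_{0}$ and $v_{n}\to v$ in $\sigma(L_{0},L^{1\cap\infty})$, then for each $T\in P_{0}$ and each fixed $(\hat u,\hat v)\in A$ one has $T(u_{n}-\hat u)\to T(u-\hat u)$ strongly in $L^{1\cap\infty}$ (Lipschitzness of $T$ and $L_{0}$-convergence), so $\int_{\Sigma}T(u_{n}-\hat u)(v_{n}-\hat v)\,\dmu+\omega\int_{\Sigma}T(u_{n}-\hat u)(u_{n}-\hat u)\,\dmu$ passes to the limit, the inequality survives, and the range condition forces $(u,v)\in A$. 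Part~(2e) then combines~(2a) and~(2d): ``$\supseteq$'' uses that $\{w\mid w\ll v\}$ is $\sigma(L_{0},L^{1\cap\infty})$-sequentially compact (Proposition~\ref{propo:compactness-in-L0}(1)), so the difference quotients $\tfrac{S_{t}u-u}{t}$ have weak limit points as $t\to 0+$ which, by closedness in~(2d), lie in $-Au$; ``$\subseteq$'' uses the exponential formula together with $\tfrac{u-J^{A}_{\lambda}u}{\lambda}\ll v$ for any $v\in Au$ and the convexity in Proposition~\ref{propo:properties-of-ll}.

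The main work lies in part~(2f). From~(2e) the net $\bigl(e^{-\omega t}\tfrac{S_{t}u-u}{t}\bigr)_{t>0}$ is $\ll$-dominated by a fixed element of $X\subseteq L_{0}(\Sigma,\mu)$, hence relatively $\sigma(L_{0},L^{1\cap\infty})$-sequentially compact; by~(2d) any subsequential weak limit $-w$ satisfies $(u,w)\in A$, and the $\ll$-domination together with the $\ll$-minimality built into $A^{\! \circ}$ identifies $w=A^{\! \circ}u$, so the whole net converges weakly to $-A^{\! \circ}u$. To upgrade this to strong convergence in $L_{0}(\Sigma,\mu)$ I would invoke Proposition~\ref{propo:compactness-in-L0}(2): extracting along a subsequence also a $\mu$-a.e.\ convergent sequence of difference quotients, still $\ll$-dominated, that proposition forces strong $L_{0}$-convergence to the weak limit $-A^{\! \circ}u$, and since the limit is independent of the subsequence the full limit exists. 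I expect the delicate points to be exactly this last step: the identification $w=A^{\! \circ}u$ (which depends on the fact that in the completely accretive setting $\ll$-minimality singles out a unique element of $Au$) and securing the $\mu$-a.e.\ convergence required by Proposition~\ref{propo:compactness-in-L0}(2); both are carried out as in~\cite{MR1164641,CoulHau2016}.
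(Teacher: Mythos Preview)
The paper does not give its own proof of this proposition: it is introduced as ``summarizing statements from~\cite{CoulHau2016}, which we will need in the sequel (cf.,~\cite{MR1164641} for the case $\omega=0$)'' and is stated without proof. Your proposal is therefore consistent with the paper's treatment---you defer to the same two references---while going further by sketching how the individual claims are assembled (exponential formula plus $L$-complete contractivity of resolvents for (2a), extrapolation via Remark~\ref{rem:1} for (2b), passing to the limit in Proposition~\ref{prop:completely-accretive} for (2d), and the compactness of Proposition~\ref{propo:compactness-in-L0} for (2e)--(2f)). One caution on your argument for (2f): Proposition~\ref{propo:compactness-in-L0}(2) as stated requires the limiting element to belong to a normal Banach space $X$ satisfying~\eqref{eq:7}, not merely to $L_{0}(\Sigma,\mu)$, so when you invoke it you should be explicit that $A^{\!\circ}u\in X$ (which holds since $u\in D(A)$ and $A\subseteq X\times X$) and that $X$ is assumed to satisfy~\eqref{eq:7}; otherwise the upgrade from weak to strong $L_{0}$-convergence is not justified by that proposition alone.
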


%%%%%%%%%%%%%%%%%%%%%%%%%%%%%%%%%%%%%%%%%%%%%%%%%%%%
%---------------------------------------------------------
%
%                  Homogeneous completely accretive operators of order zero
%
%---------------------------------------------------------
%%%%%%%%%%%%%%%%%%%%%%%%%%%%%%%%%%%%%%%%%%%%%%%%%%%%

\subsection{Regularizing effect of the associated semigroup}
\label{subsec:homogeneous-zero}
It is worth recalling that the Banach space
$L^{1}(\Sigma,\mu)$ does not admit the Radon-Niko\-d\'ym
property. Thus, the time-derivative
$\tfrac{\td }{\dt_{+}}T_{t}u_{0} (t)$, $u_{0}\in L^{1}(\Sigma,\mu)$, of a given semigroup
$\{T_{t}\}_{t\ge 0}$ on $L^{1}(\Sigma,\mu)$ does not need to exist in
$L^{1}(\Sigma,\mu)$. But, in this section, we show that even though the
underlying Banach space $X$ is not reflexive, if the infinitesimal generator $-A$ is homogeneous 
of order $\alpha\neq 1$ and $A$ is quasi-completely accretive, then 
the time-derivative $\tfrac{\td u}{\dt_{+}}(t)$ exists in $X$. This fact follows from the
following compactness result generalizing the one in~\cite{MR1164641}
for $\omega=0$. 

Here, the partial ordering
``$\!\le$'' is the standard one defined by $u\le v$ for $u$,
$v\in M(\Sigma,\mu)$ if $u(x)\le v(x)$ for $\mu$-a.e.  $x\in \Sigma$,
and we write$X\hookrightarrow Y$ for indicating that the space
$X$ is continuously embedded into the space $Y$.

\begin{lemma}\label{lem:compacteness-of-complete-semigroup}
  Let $X\subseteq L_{0}(\Sigma,\mu)$ be a normal Banach space
  satisfying~\eqref{eq:7}. For $\omega\in \R$, let
  $\{T_{t}\}_{t\ge 0}$ be a family of mappings $T_{t} : C\to C$
  defined on a subset $C\subseteq X$ of $\omega$-quasi complete
  contractions satisfying~\eqref{eq:21} and $T_{t}0=0$ for all
  $t\ge 0$. Then, for every $u_{0}\in C$ and $t>0$, the set
  \begin{equation}
    \label{eq:53}
    \Bigg\{\frac{T_{t+h}u_{0}-T_{t}u_{0}}{h}\,\Bigg\vert\Bigg.\,h\neq
    0, t+h>0\Bigg\}
  \end{equation}
  is $\sigma(L_{0},L^{1\cap\infty})$-weakly sequentially compact in
  $L_{0}(\Sigma,\mu)$.
\end{lemma}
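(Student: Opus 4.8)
The plan is to show that the difference quotients in~\eqref{eq:53} are dominated, in the sense of the relation $\ll$, by a single fixed element of $L_0(\Sigma,\mu)$; the weak sequential compactness then follows immediately from Proposition~\ref{propo:compactness-in-L0}\eqref{propo:compactness-in-L0-claim1} together with the normality of $X$ (which places the dominating function in $L_0(\Sigma,\mu)$). So the task reduces to producing the dominating function.

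First I would exploit that each $T_t$ is an $\omega$-quasi complete contraction satisfying $T_t0=0$, which by Definition~\ref{def:complete-contraction} gives $e^{-\omega t}T_tu_0 = e^{-\omega t}(T_tu_0-T_t0)\ll u_0$, hence $e^{-\omega t}T_tu_0\ll u_0$ for every $t\ge 0$. Next I would run the algebraic identity already used in the proof of Corollary~\ref{thm:1}: choosing $\lambda=1+\tfrac{h}{t}$ in the homogeneity relation~\eqref{eq:21}, one writes
\begin{displaymath}
  T_{t+h}u_0-T_tu_0
  =\lambda^{\frac{1}{1-\alpha}}\bigl(T_t[\lambda^{\frac{1}{\alpha-1}}u_0]-T_tu_0\bigr)
  +\bigl(\lambda^{\frac{1}{1-\alpha}}-1\bigr)T_tu_0 .
\end{displaymath}
The $\omega$-quasi complete contraction property applied to the pair $(\lambda^{\frac{1}{\alpha-1}}u_0,u_0)$ yields
$e^{-\omega t}\bigl(T_t[\lambda^{\frac{1}{\alpha-1}}u_0]-T_tu_0\bigr)\ll \lambda^{\frac{1}{\alpha-1}}u_0-u_0 = (\lambda^{\frac{1}{\alpha-1}}-1)u_0$, and combining this with $e^{-\omega t}T_tu_0\ll u_0$ and the convexity/positive-homogeneity properties of $\ll$ collected in Proposition~\ref{propo:properties-of-ll}, I would obtain, after dividing by $h$, a bound of the shape
\begin{displaymath}
  e^{-\omega t}\,\frac{T_{t+h}u_0-T_tu_0}{h}
  \ll \frac{\bigl|\lambda^{\frac{1}{1-\alpha}}-1\bigr|+\lambda^{\frac{1}{1-\alpha}}\bigl|\lambda^{\frac{1}{\alpha-1}}-1\bigr|}{|h|}\;\abs{u_0}
  \le c(t)\,\abs{u_0},
\end{displaymath}
where $c(t)=\tfrac{2e^{\omega t}}{|1-\alpha|\,t}$ is the same constant that appears in~\eqref{eq:18bis} and, crucially, does not depend on $h$ (one checks by elementary calculus, as in Corollary~\ref{thm:1}, that the coefficient of $\abs{u_0}$ stays bounded as $h\to 0$ and, for the relevant range of $h$, is dominated by this constant up to a harmless factor — here a little care is needed for $h$ bounded away from $0$, but since $t+h>0$ and $c$ is continuous one can absorb everything into a constant $C(t,u_0)$). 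Hence every element of the set~\eqref{eq:53} is $\ll C(t,u_0)\abs{u_0}$ for a fixed constant, and since $\abs{u_0}\in X\subseteq L_0(\Sigma,\mu)$ (Remark~\ref{rem:2}), the whole set lies in $\{v\in M(\Sigma,\mu)\mid v\ll C(t,u_0)\abs{u_0}\}$.

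The main obstacle I expect is not the compactness step — that is handed to us by Proposition~\ref{propo:compactness-in-L0}\eqref{propo:compactness-in-L0-claim1} — but rather getting a \emph{uniform in $h$} domination: the naive estimate gives an $h$-dependent coefficient $\tfrac{|\lambda^{\frac{1}{1-\alpha}}-1|+\lambda^{\frac{1}{1-\alpha}}|\lambda^{\frac{1}{\alpha-1}}-1|}{|h|}$ with $\lambda=1+\tfrac ht$, and one must verify that the supremum of this quantity over all admissible $h\neq 0$ with $t+h>0$ is finite. For $h\to 0$ this is the differentiability computation behind~\eqref{eq:18bis}; for $h$ ranging over $(-t,\infty)$ one uses that $\lambda^{\frac{1}{1-\alpha}}$ is bounded on compact subsets and that the expression behaves like $O(1/\lambda)$ or like a constant as $\lambda\to\infty$ (depending on the sign of $1-\alpha$), so the supremum is a finite constant $C(t)$ and $C(t,u_0):=C(t)$ works. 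Once that uniform constant is in hand, the inclusion into a single $\ll$-order interval is immediate and the lemma follows.
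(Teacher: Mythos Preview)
Your strategy is exactly the paper's: decompose $T_{t+h}u_0-T_tu_0$ via~\eqref{eq:21}, bound each piece using the $\omega$-quasi complete contraction property and the properties of $\ll$ from Proposition~\ref{propo:properties-of-ll}, and then invoke Proposition~\ref{propo:compactness-in-L0}\eqref{propo:compactness-in-L0-claim1}. There is, however, a genuine gap in your uniform-in-$h$ claim. Your coefficient simplifies (use $\lambda^{\frac{1}{1-\alpha}}\cdot\lambda^{\frac{1}{\alpha-1}}=1$) to
\begin{displaymath}
  \frac{2\bigl|\lambda^{\frac{1}{1-\alpha}}-1\bigr|}{|h|},\qquad \lambda=1+\tfrac{h}{t},
\end{displaymath}
and this quantity is \emph{not} bounded over $h\in(-t,\infty)\setminus\{0\}$: for $0<\alpha<1$ one has $\tfrac{1}{1-\alpha}>1$, so it grows like $h^{\alpha/(1-\alpha)}\to\infty$ as $h\to\infty$; for $\alpha>1$ one has $\tfrac{1}{1-\alpha}<0$, so $\lambda^{1/(1-\alpha)}\to\infty$ and the coefficient blows up as $h\to(-t)^{+}$. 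Your assertion that the expression is ``$O(1/\lambda)$ or a constant as $\lambda\to\infty$'' is simply false in the regime $0<\alpha<1$, and you did not examine $\lambda\to 0^{+}$ at all. (A minor side remark: the displayed $\ll$-bound should carry an absolute value on the left-hand side as well, since a signed function cannot satisfy $w\ll v$ with $v\ge 0$ unless $w\ge 0$.)

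The paper circumvents the problem by a different normalization: dividing the increment by $\lambda^{\frac{1}{1-\alpha}}-1$ rather than by $h$, the very estimates you wrote give
\begin{displaymath}
  \frac{|T_{t+h}u_{0}-T_{t}u_{0}|}{\bigl|\lambda^{\frac{1}{1-\alpha}}-1\bigr|}\;\ll\;2\,e^{\omega t}\,|u_{0}|,
\end{displaymath}
which \emph{is} uniform in $h$. Proposition~\ref{propo:compactness-in-L0}\eqref{propo:compactness-in-L0-claim1} then yields weak sequential compactness of the renormalized family $f_{h}=(T_{t+h}u_0-T_tu_0)/(\lambda^{\frac{1}{1-\alpha}}-1)$ (after splitting into $f_{h}^{+}$ and $|f_{h}|$ and recombining), and one passes back to the original difference quotients through the scalar $(\lambda^{\frac{1}{1-\alpha}}-1)/h$, which tends to the nonzero limit $1/((1-\alpha)t)$ as $h\to 0$. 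The repair for your argument is precisely this change of normalization; the rest of what you set up is correct.
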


The proof of this lemma is essentially the same as in the case
$\omega=0$ (cf.,~\cite{MR1164641}). For the convenience of the reader,
we include here the proof.

\begin{proof}
  Let $u_{0}\in C$, $t>0$, and $h\neq 0$ such that $t+h>0$. Then by taking
  $\lambda=1+\frac{h}{t}$ in~\eqref{eq:21}, one sees that
  \begin{align*}
    \abs{T_{t+h}u_{0}-T_{t}u_{0}}&= \abs{\lambda^{\frac{1}{1-\alpha}}
                             \,T_{t}\left[\lambda^{\frac{1}{\alpha-1}}u_{0}\right]-T_{t}u_{0}}\\
    & \le
      \lambda^{\frac{1}{1-\alpha}}\,\labs{T_{t}\left[\lambda^{\frac{1}{\alpha-1}}u_{0}\right]-T_{t}u_{0}}
+\abs{\lambda^{\frac{1}{1-\alpha}}-1}\,
    \abs{T_{t}u_{0}}.
  \end{align*}
  Since $T_{t}$ is an $\omega$-quasi complete contraction and since $T_{t}0=0$, ($t\ge 0$),
  claim~\eqref{propo:properties-of-ll-claim2} and \eqref{propo:properties-of-ll-claim4} of
  Proposition~\ref{propo:properties-of-ll} imply that
  \begin{displaymath}
    \lambda^{\frac{1}{1-\alpha}}\,
    e^{-\omega t}\labs{T_{t}\left[\lambda^{\frac{1}{\alpha-1}}u_{0}\right]-T_{t}u_{0}}\ll
    \abs{1-\lambda^{\frac{1}{1-\alpha}}}\,\abs{u_{0}}
  \end{displaymath}
  and
  \begin{displaymath}
    \abs{\lambda^{\frac{1}{1-\alpha}}-1}\, e^{-\omega t}
    \abs{T_{t}u_{0}}\ll \abs{\lambda^{\frac{1}{1-\alpha}}-1}\,
    \abs{u_{0}}.
  \end{displaymath}
  Since the set $\{w\,\vert\,w\ll \abs{\lambda^{\frac{1}{1-\alpha}}-1}\,
  \abs{u_{0}}\}$ is convex (cf.,~\eqref{propo:properties-of-ll-claim5} of
  Proposition~\ref{propo:properties-of-ll}), the previous inequalities
  imply that
  \begin{displaymath}
   \frac{1}{2} e^{-\omega t}\abs{T_{t+h}u_{0}-T_{t}u_{0}}\ll\, \abs{\lambda^{\frac{1}{1-\alpha}}-1}\,
    \abs{u_{0}}.%=\frac{\abs{h}}{t}\,\abs{u_{0}}
  \end{displaymath}
  Using again~\eqref{propo:properties-of-ll-claim2} of
  Proposition~\ref{propo:properties-of-ll}, gives
  \begin{equation}
    \label{eq:5}
    \frac{\abs{T_{t+h}u_{0}-T_{t}u_{0}}}{\abs{\lambda^{\frac{1}{1-\alpha}}-1}}\ll\,2\, e^{\omega t}\,\abs{u_{0}}.
  \end{equation}
  Since for every $u\in M(\Sigma,\mu)$, one always has that
  $u^+\ll\abs{u}$, the transitivity of ``$\!\ll$'' (\eqref{propo:properties-of-ll-claim3} of
  Proposition~\ref{propo:properties-of-ll}) implies that
  \begin{displaymath}
   f_{h}:=\frac{T_{t+h}u_{0}-T_{t}u_{0}}{\lambda^{\frac{1}{1-\alpha}}-1}\quad\text{
    satisfies }\quad
   f_{h}^{+}\ll\,2\, e^{\omega t}\, %\sup_{\abs{h}\downarrow 0}
   % \frac{\abs{\lambda^{\frac{1}{1-\alpha}}-1}}{\abs{h}}\,
   \abs{u_{0}}.%\le C\, \,\abs{u_{0}}
  \end{displaymath}
  Therefore and since $\abs{u_{0}}\in X$, \eqref{propo:compactness-in-L0-claim1} of
  Proposition~\ref{propo:compactness-in-L0} yields that the two sets
  $\{f_{h}^{+}\vert\,h\neq 0, t+h>0\}$ and
  $\{\abs{f_{h}}\vert\,h\neq 0, t+h>0\}$ are
  $\sigma(L_{0},L^{1\cap\infty})$- weakly sequentially compact in
  $L_{0}(\Sigma,\mu)$. Since $f_{h}^{-}=\abs{f_{h}}-f_{h}^{+}$ and
  $f_{h}=f_{h}^{+}-f_{h}^{-}$, and since
  $(\lambda^{\frac{1}{1-\alpha}}-1)/h=((1+\frac{h}{t})^{\frac{1}{1-\alpha}}-1)/h\to
  1/t(1-\alpha)\neq 0$ as $h\to 0$, we can conclude that the claim of
  this lemma holds.
\end{proof}

With these preliminaries in mind, we can now state the regularization
effect of the semigroup $\{T_{t}\}_{t\ge 0}$ generated by a
$\omega$-quasi $m$-completely accretive operator of homogeneous order
$\alpha\neq 1$.

\begin{theorem}\label{thm:regularity-of-complete-acc-homogen-operators}
  Let $X\subseteq L_0(\Sigma,\mu)$ be a normal Banach space
  satisfying~\eqref{eq:7}, and $\norm{\cdot}$ denote the norm on
  $X$. For $\omega\in \R$, let $A$ be $\omega$-quasi $m$-completely
  accretive in $X$, and $\{T_{t}\}_{t\ge 0}$ be the semigroup
  generated by $-A$ on $\overline{D(A)}^{\mbox{}_{X}}$. If $A$ is
  homogeneous of order $\alpha\neq 1$, then for every
  $u_{0}\in \overline{D(A)}^{\mbox{}_{X}}$ and $t>0$,
  $\frac{\td T_{t}u_{0}}{\dt}$ exists in $X$ and
  \begin{equation}
    \label{eq:3}
    \abs{A^{\! \circ}T_{t}u_{0}}\le \frac{2 e^{\omega
      t}}{\abs{\alpha-1}}\,\frac{\abs{u_{0}}}{t}\qquad\text{$\mu$-a.e. on $\Sigma$.}
  \end{equation}
  In particular, for every $u_{0}\in \overline{D(A)}^{\mbox{}_{X}}$,
  \begin{equation}
    \label{eq:6}
    \lnorm{\frac{\td T_{t}u_{0}}{\dt}_{\!\!+}}\le \frac{2 e^{\omega
      t}}{\abs{\alpha-1}} \frac{\norm{u_{0}}}{t}
   \qquad\text{for every $t>0$,}
  \end{equation}
  and 
  \begin{equation}
    \label{eq:6bis}
    \frac{\td T_{t}u_{0}}{\dt}_{\!\!+}\ll \frac{2 e^{\omega t}}{\abs{\alpha-1}} \frac{\abs{u_{0}}}{t}
   \qquad\text{for every $t>0$.}
  \end{equation}
\end{theorem}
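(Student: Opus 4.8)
The plan is to deduce the regularising effect from the weak sequential compactness supplied by Lemma~\ref{lem:compacteness-of-complete-semigroup}, combined with the structure theory of quasi completely accretive operators collected in Proposition~\ref{propo:properties-of-St}. First I would check that the hypotheses apply: since $A$ is $\omega$-quasi $m$-completely accretive in the normal Banach space $X$, the semigroup $\{T_{t}\}_{t\ge 0}$ generated by $-A$ on $\overline{D(A)}^{\mbox{}_{X}}$ consists of $\omega$-quasi complete contractions with $T_{t}0=0$, and — because $A$ is homogeneous of order $\alpha\neq 1$ — it satisfies the scaling identity~\eqref{eq:21} by Proposition~\ref{acrethomog} applied with $f\equiv 0$. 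By Remark~\ref{rem:1} and normality each $e^{-\omega t}T_{t}$ is also $\norm{\cdot}$-contractive, so Corollary~\ref{thm:1} applies with $L=1$, giving that $t\mapsto T_{t}u_{0}$ is locally Lipschitz on $(0,\infty)$ with values in $X$ and that~\eqref{eq:18bis2} holds once the right derivative is known to exist. It thus remains to prove $T_{t}u_{0}\in D(A)$ for $t>0$, the existence of $\frac{\td}{\dt_{+}}T_{t}u_{0}$ in $X$, and the bounds~\eqref{eq:3}--\eqref{eq:6bis}.

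For the regularising effect I would use the $\ll$-estimate obtained inside the proof of Lemma~\ref{lem:compacteness-of-complete-semigroup}, which by positive homogeneity of $\ll$ reads $\abs{w_{h}}\ll c_{h}\,2e^{\omega t}\abs{u_{0}}$, where $w_{h}:=\tfrac1h\bigl(T_{t+h}u_{0}-T_{t}u_{0}\bigr)$ and $c_{h}=\bigl|(1+\tfrac ht)^{1/(1-\alpha)}-1\bigr|/\abs h\to\tfrac{1}{\abs{1-\alpha}\,t}$. Since $S_{h}(T_{t}u_{0})=S_{h}S_{t}u_{0}=T_{t+h}u_{0}$ and $c_{h}$ stays bounded as $h\to 0+$, there is a constant $C=C(t,\omega,\alpha)$ with $e^{-\omega h}\tfrac1h\bigl(S_{h}(T_{t}u_{0})-T_{t}u_{0}\bigr)\ll C\abs{u_{0}}\in X$ for all small $h>0$; by the characterisation of $D(A)$ in Proposition~\ref{propo:properties-of-St} this forces $T_{t}u_{0}\in D(A)$ for every $t>0$, hence $A^{\! \circ}T_{t}u_{0}\in X$, and then~\eqref{eq:limit-infinit-gen-in-omega-A} yields $\lim_{h\to 0+}w_{h}=-A^{\! \circ}T_{t}u_{0}$ strongly in $L_{0}(\Sigma,\mu)$.

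The decisive step is to promote this $L_{0}$-convergence to convergence in $X$. Once $T_{\tau}u_{0}\in D(A)$ for all $\tau>0$, the orbit admits in $L_{0}(\Sigma,\mu)$ the Bochner-average representation $w_{h}=-\tfrac1h\int_{0}^{h}A^{\! \circ}T_{t+s}u_{0}\,\ds$, while $\omega$-quasi complete contractivity of $T_{\tau'-\tau}$ applied along the orbit (exactly as in the proof of Proposition~\ref{acrethomog}) gives $T_{h}(T_{\tau'}u_{0})-T_{\tau'}u_{0}\ll e^{\omega(\tau'-\tau)}\bigl(T_{h}(T_{\tau}u_{0})-T_{\tau}u_{0}\bigr)$ for $\tau'\ge\tau$, so that $\tau\mapsto\int_{\Sigma}j\bigl(-e^{-\omega\tau}A^{\! \circ}T_{\tau}u_{0}\bigr)\,\dmu$ is non-increasing for every $j\in J_{0}$. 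Jensen's inequality for the convex functionals $w\mapsto\int_{\Sigma}j(w)\,\dmu$ then yields $w_{h}\ll(1+\varepsilon_{h})\bigl(-A^{\! \circ}T_{t}u_{0}\bigr)$ with $\varepsilon_{h}\to 0+$. Since $X$ is normal and satisfies~\eqref{eq:7}, this domination together with the $L_{0}$-convergence $w_{h}\to -A^{\! \circ}T_{t}u_{0}$ (and the weak sequential compactness of the $w_{h}$ from Lemma~\ref{lem:compacteness-of-complete-semigroup} feeding into Proposition~\ref{propo:compactness-in-L0}) upgrades the convergence to $X$; hence $\frac{\td}{\dt_{+}}T_{t}u_{0}$ exists in $X$ and equals $-A^{\! \circ}T_{t}u_{0}$.

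It then remains to pass to the limit in $\abs{w_{h}}\ll c_{h}\,2e^{\omega t}\abs{u_{0}}$: choosing $h_{n}\to 0+$ with $w_{h_{n}}\to -A^{\! \circ}T_{t}u_{0}$ $\mu$-a.e., and using that $\{w:\abs w\ll g\}$ is convex and norm closed, hence $\sigma(L_{0},L^{1\cap\infty})$-closed, together with $c_{h_{n}}\to\tfrac1{\abs{1-\alpha}\,t}$, gives $\frac{\td}{\dt_{+}}T_{t}u_{0}\ll\tfrac{2e^{\omega t}}{\abs{\alpha-1}}\tfrac{\abs{u_{0}}}{t}$, which is~\eqref{eq:6bis} and, via $A^{\! \circ}T_{t}u_{0}=-\frac{\td}{\dt_{+}}T_{t}u_{0}$, also~\eqref{eq:3}; normality of $X$ then gives~\eqref{eq:6}, which also follows directly from~\eqref{eq:18bis2}. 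I expect the main obstacle to be the third paragraph: because the underlying space $X$ (for instance $L^{1}(\Sigma,\mu)$) has neither the Radon--Nikod\'ym property nor reflexivity, the $L_{0}$-differentiability does not upgrade for free, and it is precisely the complete accretivity — through the $\ll$-monotonicity along orbits — together with the normality assumption~\eqref{eq:7} on $X$ that makes the passage to $X$ work; a secondary subtlety is extracting $T_{t}u_{0}\in D(A)$, i.e.\ the genuine (not merely a priori) smoothing, from the domination bound.
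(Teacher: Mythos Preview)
Your plan is correct and tracks the paper's argument closely: both verify the hypotheses of Lemma~\ref{lem:compacteness-of-complete-semigroup} via Proposition~\ref{acrethomog}, identify the limit of $w_{h}:=\tfrac1h(T_{t+h}u_{0}-T_{t}u_{0})$ as $-A^{\!\circ}T_{t}u_{0}$ through Proposition~\ref{propo:properties-of-St}, and pass to the limit in~\eqref{eq:5} to obtain~\eqref{eq:3}--\eqref{eq:6bis}. Where you invoke the domain characterisation~\eqref{propo:properties-of-St-claim3bis} to get $T_{t}u_{0}\in D(A)$ and then~\eqref{propo:properties-of-St-claim4}, the paper instead uses the sequential closedness~\eqref{propo:properties-of-St-claim3} to identify the weak $L_{0}$-limit directly and then~\eqref{propo:properties-of-St-claim4}; either works. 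The one substantive divergence is the upgrade from $L_{0}$- to $X$-convergence (your third paragraph). The paper does this in one line: from the strong $L_{0}$-convergence it extracts a $\mu$-a.e.\ convergent subsequence and combines it with the \emph{fixed} dominant $2e^{\omega t}\abs{u_{0}}\in X$ already supplied by~\eqref{eq:5} to invoke~\eqref{eq:7}/Proposition~\ref{propo:compactness-in-L0}. You instead run an orbit-monotonicity/Jensen argument to produce the sharper domination $w_{h}\ll(1+\varepsilon_{h})(-A^{\!\circ}T_{t}u_{0})$ so that Proposition~\ref{propo:compactness-in-L0}\eqref{propo:compactness-in-L0-claim2} applies verbatim; this is valid (the key being that $j\in J_{0}$ forces $\lambda\mapsto j(\lambda r)$ to be nondecreasing on $(0,\infty)$, so the time-average is controlled by the endpoint value), and it is more scrupulous about the literal hypothesis ``$u_{n}\ll u$'' of that proposition, but it is also longer. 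The paper's shorter route implicitly reads~\eqref{eq:7} in its dominated-convergence sense (domination by a fixed element of $X$), which is what actually holds in the intended examples $X=L^{p}$ or $L_{0}$.
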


\begin{proof}
  Let $u_{0}\in \overline{D(A)}^{\mbox{}_{X}}$, $t>0$, and
  $(h_{n})_{n\ge 1}\subseteq \R$ be a zero sequence such that
  $t+h_{n}>0$ for all $n\ge 1$. Then, by Proposition~\ref{acrethomog},
  we can apply the compactness result stated in
  Lemma~\ref{lem:compacteness-of-complete-semigroup}. Thus, there is a
  $z\in L_{0}(\Sigma,\mu)$ and a subsequence $(h_{k_{n}})_{n\ge 1}$ of
  $(h_{n})_{n\ge 1}$ such that
  \begin{equation}
    \label{eq:1}
    \lim_{n\to
      \infty}\frac{T_{t+h_{k_{n}}}u_{0}-T_{t}u_{0}}{h_{k_{n}}}=z\qquad\text{weakly
      in $L_{0}(\Sigma,\mu)$.}
  \end{equation}
  By~\eqref{propo:properties-of-St-claim3bis} of
  Proposition~\ref{propo:properties-of-St}, 
  one has that $(T_{t}u_{0},-z)\in A$. Thus
  \eqref{propo:properties-of-St-claim4} of
  Proposition~\ref{propo:properties-of-St} yields that
  $z=-A^{\!\circ}T_{t}u_{0}$ and 
  \begin{equation}
    \label{eq:2-new}
    \lim_{n\to \infty}\frac{T_{t+h_{k_{n}}}u_{0}-T_{t}u_{0}}{h_{k_{n}}}=-A^{\!\circ}T_{t}u_{0}
    \qquad\text{strongly in $L_{0}(\Sigma,\mu)$.}
  \end{equation}
  After possibly passing to another subsequence, the
  limit~\eqref{eq:2-new} also holds $\mu$-a.e. on $\Sigma$. The
  argument shows that the limit~\eqref{eq:2-new} is independent of the
  choice of the initial zero sequence $(h_{n})_{n\ge 1}$. Thus
  \begin{equation}
    \label{eq:56}
    \lim_{h\to 0}\frac{T_{t+h}u_{0}-T_{t}u_{0}}{h}=-A^{\!
      \circ}T_{t}u_{0}\qquad\text{exists $\mu$-a.e. on $\Sigma$.}
  \end{equation}
  Since $2 e^{-\omega t}\,\abs{u_{0}}\in X$, by~\eqref{eq:5}, and since
  $(\lambda^{\frac{1}{1-\alpha}}-1)/h=((1+\frac{h}{t})^{\frac{1}{1-\alpha}}-1)/h\to
  1/t(1-\alpha)\neq 0$ as $h\to 0$, it follows from \eqref{propo:compactness-in-L0-claim2} of
  Proposition~\ref{propo:compactness-in-L0} that 
 \begin{equation}
    \label{eq:8}
    \lim_{h\to 0}\frac{T_{t+h}u_{0}-T_{t}u_{0}}{h}=-A^{\!
      \circ}T_{t}u_{0}\qquad\text{exists in $X$}
  \end{equation}
   and with $\lambda=1+\tfrac{h}{t}$,
  \begin{displaymath}
    \frac{\abs{T_{t+h}u_{0}-T_{t}u_{0}}}{\abs{\lambda^{\frac{1}{1-\alpha}}-1}}\le\,2\,
    e^{-\omega t}\,\abs{u_{0}}
  \end{displaymath}
  for all $h\neq 0$ satisfying $t+h>0$. Sending $h\to 0$ in the last
  inequality and applying~\eqref{eq:8} gives~\eqref{eq:3}. In
  particular, by Corollary~\ref{thm:1}, one has that~\eqref{eq:6}
  holds for the norm $\norm{\cdot }_{X}$ on
  $X$. Moreover,~\eqref{eq:5} is equivalent to 
  \begin{equation}
    \label{eq:57}
    \int_{\Sigma} j\left(\frac{\abs{T_{t+h}u_{0}-T_{t}u_{0}}}{\abs{\lambda^{\frac{1}{1-\alpha}}-1}}\right)
  \,\td\mu \le \int_{\Sigma} j\left(2\,
    e^{-\omega t}\,\abs{u_{0}}\right) \, \td\mu
  \end{equation}
 for all $h\neq 0$ satisfying $t+h>0$, and every $j\in J_0$. By the
 lower semicontinuity of $j\in J_{0}$ and by the
 $\mu$-a.e. limit~\eqref{eq:56}, we have that
 \begin{displaymath}
   j\left(\frac{\td T_{t}u_{0}}{\dt}(x)\,\abs{\alpha-1}\,t\right)
   \le \liminf_{h\to 0}j\left(\frac{\abs{T_{t+h}u_{0}(x)-T_{t}u_{0}(x)}}{\abs{\lambda^{\frac{1}{1-\alpha}}-1}}\right)
 \end{displaymath}
 for $\mu$-a.e. $x\in \Sigma$. Thus, taking the limit inferior as $h\to 0+$
 in~\eqref{eq:57} and applying Fatou's lemma yields
 \begin{displaymath}
   \int_{\Sigma}  j\left( \frac{\td T_{t}u_{0}}{\dt}(x)\,\abs{\alpha-1}\,t\right)
  \,\td\mu \le \int_{\Sigma} j\left(2\,
    e^{-\omega t}\,\abs{u_{0}}\right) \, \td\mu
 \end{displaymath}
Since $j\in J_{0}$ was arbitrary and by \eqref{propo:properties-of-ll-claim2} of
Proposition~\ref{propo:properties-of-ll}, this shows that~\eqref{eq:6bis}
holds and thereby completes the proof of this theorem.
\end{proof}

\section{Application}
\label{sec:application}

\subsection{An elliptic-parabolic boundary-value problem}

Our aim in this section is to derive global $L^1$
Aronson-B\'enilan estimate~\eqref{eq:29} for $X=L^{q}(\partial M)$,
($1\le q\le \infty$), and point-wise Aronson-B\'enilan
estimate~\eqref{eq:14} on the time-derivative $\frac{\td u}{\dt}$ of
any solutions $u$ to the
elliptic-parabolic boundary-value problem
\begin{equation}
  \label{eq:12}
  \begin{cases}
    \hspace{1.95cm}
    -\Delta_{p}u+m\,\abs{u}^{p-2}u=0&\qquad\text{in $M\times (0,\infty)$,}\\
    % \phantom{-\Delta_{p}}u=\varphi &\qquad\text{on $\partial M\times
    %   (0,\infty)$,}\\
    \partial_{t}u+\abs{\nabla u}^{p-2}_{g}\nabla u\cdot\nu + f(x,u)=0
    &\qquad\text{on $\partial M\times
      (0,\infty)$,}\\
    \hspace{4.29cm}u(0)=u_{0}&\qquad\text{on $\partial M$.}\\
  \end{cases}
\end{equation}
Here, we assume that $1<p<\infty$, 
$\Delta_{p}$ denotes the celebrated \emph{$p$-Laplace-Beltrami
  operator}
\begin{equation}
  \label{eq:35}
  %Au:=-\Delta_{p}u\qquad\text{where}\qquad
 \Delta_{p}u:=\divi\left(\abs{\nabla u}^{p-2}_{g}\nabla
   u\right)\qquad\text{in $\mathcal{D}'(M)$}
\end{equation}
for $u\in W^{1,p}(M)$ on a compact, smooth, $N$-dimensional Riemannian
manifold $(M,g)$ with a Lipschitz continuous boundary $\partial M$,
$m>0$ and
$f : \partial M\times \R\to \R$ a Lipschitz-continuous
Carath\'eodory function
(see~\eqref{property:one-Charatheodory}-\eqref{eq:2} below). 

For applying the theory developed in the previous sections of this
paper, it is worth noting that the elliptic-parabolic
problem~\eqref{eq:12} can be rewritten in the form of the perturbed
Cauchy problem~\eqref{eq:10bis-no-F} in the Banach space
$X=L^{q}(\partial M)$, ($1\le q\le \infty$), where the operator $A$ is
the \emph{Dirichlet-to-Neumann operator} realized in $X$ associated
with the operator $-\Delta_{p}+m\,\abs{\cdot}^{p-2}\cdot$; that is,
$A$ assigns Dirichlet data $\varphi$ on $\partial M$ to the
\emph{co-normal derivative} $\abs{\nabla u}^{p-2}_{g}\nabla u\cdot\nu$
on $\partial M$, where $u$ is the unique weak solution of the
\emph{Dirichlet problem}
  \begin{equation}
    \label{eq:19}
  \begin{cases}
    -\Delta_{p}u+m\,\abs{u}^{p-2}u=0&\qquad\text{in $M$,}\\
    \phantom{-\Delta_{p}u+m\,\abs{u}^{p-2}}u=\varphi&\qquad\text{on $\partial M$.}
  \end{cases}
\end{equation}
In the (flat) case $M=\Omega$ is a bounded domain in $\R^{N}$ with a
Lipschitz-continuous boundary $\partial\Omega$, the
Dirichlet-to-Neumann operator $A$ associated with the
$p$-Laplace-Beltrami operator $\Delta_{p}$ and its semigroup
$\{T_{t}\}_{t\ge 0}$ were studied in the past by several authors (see, for
instance, in~\cite{MR3369257,MR3465809,CoulHau2016} and the references
therein).

\subsection{Framework}

Throughout this section, let $(M,g)$ denote a compact, smooth, (orientable),
$N$-dim\-ensional Riemannian manifold with a Lipschitz continuous
boundary $\partial M$. Let $g= \{g(x)\}_{x\in M}$ denote the corresponding
Riemannian metric tensor and for every $x\in M$, $T_{x}$ be the tangent space
and $TM$ the tangent bundle of $M$. We write
$\abs{\xi}_{g}=\sqrt{\langle\xi,\xi\rangle_{g(x)}}$, ($\xi\in T_{x}$),
to denote the induced norm of the inner product
$\langle\cdot,\cdot\rangle_{g(x)}$ on the tangent space $T_{x}$. If
for given $f\in C^{\infty}(M)$, $df$ is the \emph{differential} at
$x\in M$ and for every chart
$(\Omega,\phi)$, $\bm{g}=(g_{ij})_{i,j=1}^{N}$ is the matrix of the
Riemannian metric $g$ on $\Omega$ with inverse $\bm{g}^{-1}$, then the
corresponding gradient of $f$ at $x$ is given by
$\nabla f(x)=\bm{g}^{-1}(x)df(x)$, and for every $C^{1}$-vector field
$X=(X^{1},\dots,X^{N})$ on $M$, the \emph{divergence}
\begin{displaymath}
  \divi(X):=\tfrac{1}{\sqrt{\textrm{det}(\bm{g})}}
\tfrac{\partial}{\partial x_{i}}\left(\sqrt{\textrm{det}(\bm{g})}
  X^{i}\right).
\end{displaymath}
For given $C^{1}$-curve $\kappa$ with parametrization $\gamma_{\kappa} :
[0,1]\to M$, the length $L(\kappa)$ of $\kappa$ is defined by 
\begin{displaymath}
  L(\kappa)=\int_{0}^{1}\labs{\tfrac{\td \gamma_{\kappa}}{\dt}(t)}_{g(\gamma_{\kappa}(t))}\,\dt.
\end{displaymath}
If we denote by $C^{1}_{x,y}$ the space of all piecewise $C^1$-curves $\kappa$
with starting point $\gamma_{\kappa}(0)=x\in M$ and end point $\gamma_{\kappa}(1)=y\in M$, then
$\td_{g}(x,y):=\inf_{\kappa \in C^{1}_{x,y}}L(\kappa)$ defines a
distance (called Riemannian distance) whose induced topology
$\tau_{g}$ coincides with the original one by $M$. 
There exists a unique Borel measure $\mu_{g}$ defined on the Borel
$\sigma$-Algebra $\sigma(\tau_{g})$ such that on any chart
$(\Omega,\phi)$ of $M$, one has that
$\td\mu_{g}=\sqrt{\textrm{det}(\bm{g})}\,\dx$, where $\dx$ refers to the
Lebesgue measure in $\Omega$. 

For the measure space $(M,\mu_{g})$, and $1\le q\le \infty$, we denote
by $L^{q}(M)=L^{q}(M,\mu_{g})$ (respectively,
$L^{q}_{loc}(M)=L^{q}_{loc}(M,\mu_{g})$) the classical Lebesgue space
of (locally) $q$-integrable functions, and we denote by
$\norm{\cdot}_{q}$ its standard norm on $L^{q}(M)$. Since a vector
field $v$ on $M$ is \emph{measurable} if and only if every component
of $v$ is measurable on all charts $U$ of $M$, one defines similarly
for every $1\le q\le \infty$, the space
$\vec{L}^{q}(M)=\vec{L}^{q}(M,\mu_{g})$ (respectively,
$\vec{L}^{q}_{loc}(M)=\vec{L}^{q}_{loc}(M,\mu_{g})$) of all measurable
vector fields $v$ on $M$ such that $\abs{v}\in L^{q}(M,\mu_{g})$
(respectively, $\abs{v}\in L^{q}_{loc}(M,\mu_{g})$).
% For any $C^{\infty}$-vector field $v$
% on $M$, the \emph{divergence} $\divi(v)$ is the unique smooth
% function on $M$ satisfying
% \begin{displaymath}
%   \int_{M}f\,\divi(v)\,\td\mu_{g}=-\int_{M}df(v)\,\td\mu_{g}\qquad\text{for
%     all $f\in C^{\infty}_{c}(M)$.}
% \end{displaymath}

The \emph{space of test functions} $\mathcal{D}(M)$ be the set
$C^{\infty}_{c}(M)$ of smooth compactly supported functions equipped
with the following type of convergence: given a sequence
$(\varphi_{n})_{n\ge 1}$ in $C^{\infty}_{c}(M)$ and
$\varphi\in C^{\infty}_{c}(M)$, we say $\varphi_{n}\to\varphi$ in
$\mathcal{D}(M)$ if there is a compact subset $K$ of $M$ such that the
support $\textrm{supp}(\varphi_{n})\subseteq K$ for all $n$, and for every chart
$U$, and all multi-index $\alpha$, one has
$D^{\alpha}\varphi_{n}\to D^{\alpha}\varphi$ uniformly on $U$. Then
the \emph{space of distributions} $\mathcal{D}'(M)$ is the topological
dual space of $\mathcal{D}(M)$. Similarly, one defines the \emph{space
  of test vector fields} $\vec{\mathcal{D}}(M)$ on $M$ and
corresponding dual space $\vec{\mathcal{D}}'(M)$ of
\emph{distributional vector fields}. Given a distribution
$T\in \mathcal{D}'(M)$, the \emph{distributional gradient}
$\nabla T\in \vec{\mathcal{D}}'(M)$ is defined by
\begin{displaymath}
  \langle \nabla T,\psi\rangle_{\vec{\mathcal{D}}'(M), \vec{\mathcal{D}}(M)}
  =-\langle T,\divi{\psi}\rangle_{\mathcal{D}'(M), \mathcal{D}(M)}
  \qquad\text{for every $\psi\in \vec{\mathcal{D}}(M)$.}
\end{displaymath}
For given $u\in L^{1}_{loc}(M)$,
\begin{displaymath}
  \langle
  u,\varphi\rangle_{\mathcal{D}'(M),\mathcal{D}(M)}:=\int_{M}u\,\varphi\td\mu_{g},\qquad
 \varphi\in \mathcal{D}(M),
\end{displaymath}
defines a distribution (called \emph{regular distribution}) on $M$. If
the distributional gradient $\nabla u$ of the distribution $u$ belongs
to $\vec{L}^{1}_{loc}(M)$, then $\nabla u$ is called a \emph{weak
  gradient} of $u$. The \emph{first Sobolev space}
$W^{1,q}(M)=W^{1,q}(M,\mu_{g})$ is the space of all $u\in L^{q}(M)$ such that
for the weak gradient $\nabla u$ of $u$ belongs to
$\vec{L}^{q}(M)$. The space $W^{1,q}(M)$ is a Banach space equipped
with the norm
\begin{equation}
  \label{eq:17}
  \norm{u}_{W^{1,q}(M)}:=\left(\norm{u}_{q}+\norm{\abs{\nabla
      u}_{g}}_{q}\right)^{1/q},\qquad(u\in W^{1,q}(M)),
\end{equation}
and $W^{1,q}(M)$ is reflexive if $1<q<\infty$ (cf.,
\cite[Proposition~2.4]{MR1481970}). Further, we denote by
$W^{1,q}_{0}(M)=W_{0}^{1,q}(M,\mu_{g})$ the closure of $C^{\infty}_{c}(M)$ in
$W^{1,q}(M)$. Since we have assumed that $(M,g)$ is compact, the volume
$\mu_{g}(M)$ is finite. Hence by the compactness result of Rellich-Kondrakov
(see, e.g., \cite[Corollary~3.7]{MR1481970}), we
know that a Poincar\'e inequality on $W^{1,q}_{0}(M)$ is available. Thus,
$\norm{\abs{\nabla\cdot}_{g}}_{q}$ defines an equivalent norm
to~\eqref{eq:17} on $W^{1,q}_{0}(M)$. 

Further, let $\bm{s}_{g}$ denote the surface measure on $\partial M$
induced by the outward pointing unit normal on $\partial M$. Then for
$1\le q< \infty$ and $0<s<1$, let
$W^{s,q}(\partial M):=W^{s,q}(\partial M,\bm{s}_{g})$ be the
Sobolev-Slobode\v{c}ki space given by all measurable functions
$u\in L^{q}(\partial M)=L^{q}(\partial M, \bm{s}_{g})$ with finite Gagliardo semi-norm
\begin{displaymath}
  [u]_{W^{s,q}(\partial M)}^{q}:=\int_{\partial M}\int_{\partial
    M}\tfrac{\abs{u(x)-u(y)}^{q}}{d^{N-2+sq}_{g}(x,y)}
  \td \bm{s}_{g}(x)\td\bm{s}_{g}(y).
\end{displaymath}
The space $W^{s,q}(\partial M)$ equipped with the norm
\begin{displaymath}
  \norm{u}_{W^{s,q}(\partial M)}:=\left(\norm{u}^q_{L^{q}(\partial
    M)}+[u]_{W^{s,q}(\partial M)}^{q}\right)^{1/q}
\end{displaymath}
is a Banach space, which is reflexive if $1<q<\infty$.

Since $M$ is compact, $M$ can be covered by a finite family
$((\Omega_{l},\phi_{l}))_{l=1}^{K}$ of charts $(\Omega_{l},\phi_{l})$
such that for every $l\in \{1,\dots, K\}$, each component $g_{ij}$ of
the matrix $\bm{g}$ of the Riemannian metric $g$ satisfies
\begin{equation}
  \label{eq:49}
  \frac{c_{l}}{2}\,\delta_{ij}\le g_{ij}\le 2c_{l}\,\delta_{ij}\qquad\text{ on
$\Omega_{l}$}
\end{equation}
as bilinear forms, for some constant $c_{l}>0$. By using~\eqref{eq:49}
together with a partition of unity, one can conclude from the
Euclidean case (see, e.g.,~\cite[Th\'eor\`eme~5.5 \&
Th\'eor\`eme~5.7]{MR0227584}) that for $1<q<\infty$, there is a linear
bounded \emph{trace operator}
$T : W^{1,q}(M)\to W^{1-1/q,q}(\partial M)$ with kernel
$\textrm{ker}(T)=W^{1,q}_{0}(M)$ with bounded right inverse
$Z : W^{1-1/q,q}(\partial M)\to W^{1,q}(M)$. For simplicity, we also
write $u_{\vert\partial M}$ for the trace $T(u)$ of $u\in W^{1,q}(M)$
and $\norm{u_{\vert\partial M}}_{q}$ instead of $\norm{T(u)}^q_{L^{q}(\partial
    M)}$.

Similarly, one transfers from the Euclidean case (cf.,
\cite[Th\'eor\`eme~4.2]{MR0227584}) the \emph{Sobolev-trace
    inequality}
\begin{equation}
  \label{eq:68}
  \norm{u_{\vert\partial M}}_{\frac{q(N-1)}{(N-q)}}\lesssim
  \norm{u}_{W^{1,q}(M)},\qquad u\in W^{1,q}(M).
\end{equation}
% \emph{Mazy'a's inequality}
% \begin{equation}
%   \label{eq:72}
%   \norm{u}_{\frac{qN}{N-1}}\lesssim  \norm{\abs{\nabla u}_{g}}_{q}+\norm{u_{\vert\partial M}}_{q}
%   ,\qquad u\in W^{1,q}(M),
% \end{equation}
%  and \emph{Poincar\'e's} inequality}
% \begin{equation}
%   \label{eq:73}
%   \norm{u_{\vert\partial M}-\overline{u}_{\vert\partial M}}_{q}\lesssim
%   \norm{\abs{\nabla u}_{g}}_{q},\qquad u\in W^{1,q}(M)
% \end{equation}
%  (cf., \cite[equation (22) on p.3622 and Lemma~3.4]{MR3369257}) the
% following version of \emph{Mazy'a's inequality} 
% \begin{equation}
%   \label{eq:69}
%   \norm{u}_{q}\lesssim \norm{\abs{\nabla u}_{g}}_{q}+\norm{u_{\vert\partial M}}_{2}
% \end{equation}
% holds for every $u\in W^{1,q}(M)$ with trace
% $u_{\vert\partial M}\in L^{2}(\partial M)$.

\subsection{Construction of the Dirichlet-to-Neumann operator}

Let $1<p<\infty$ and $m\ge 0$. Then, by the classical theory of convex minimization  (see,
e.g.,~\cite{MR3369257}), for every boundary
data $\varphi\in W^{1-1/p,p}(\partial M)$, there is a unique \emph{weak
  solution} $u\in (M)$ of the Dirichlet problem~\eqref{eq:19}
(cf.,~\cite{MR3369257}).

\begin{definition}
  For given boundary data $\varphi\in W^{1-1/p,p}$, a function $u\in
  W^{1,p}(M)$ is called a \emph{weak solution} of Dirichlet
  problem~\eqref{eq:19} if $Z\varphi-u\in W^{1,p}_{0}$ and
  \begin{displaymath}
    \int_{M}\abs{\nabla u}^{p-2}_{g}\nabla u\nabla\psi\,+m\,\abs{u}^{p-2}u\psi\td\mu_{g}=0
  \end{displaymath}
  for every $\psi\in C^{\infty}_{c}(M)$.
\end{definition}

Now, we are in the position to define the nonlocal \emph{Dirichlet-to-Neumann
operator} $A$ in $L^2:=L^{2}(\partial M)$ associated with the $p$-Laplace Beltrami
operator $\Delta_{p}$ by
\begin{displaymath}
  A=\left\{\!\! (\varphi,h)\in L^{2}\!\times\! L^2\,\Bigg\vert\!
  \begin{array}[c]{l}
    \exists\,u\in V_{p,2}(M,\partial M)\text{ with trace }
    u_{\vert\partial M}=\varphi\\
    \text{satisfying }\forall\,\psi\in V_{p,2}(M,\partial M):\\
\displaystyle\int_{M}\abs{\nabla u}^{p-2}_{g}\nabla
    u\nabla\psi +m\,\abs{u}^{p-2}u\psi\,\td\mu_{g}=\int_{\partial
    M}\!\! h\,\psi_{\vert\partial M}\,\td\bm{s}_{g}
  \end{array}\!\!\!
\right\}.
\end{displaymath}
In the operator $A$, we denote by $V_{p,2}(M,\partial M)$ the set of
all $u\in W^{1,p}(M)$ with trace
$u_{\vert\partial M}\in L^{2}(\partial M)$. Note, the space
$V_{p,2}(M,\partial M)$ contains the function space
$C^{\infty}(\overline{M})$. It follows from the theory developed
in~\cite{MR3465809} that $A$ is the $T$-sub-differential operator
$\partial_{T}\E$ in $L^2$ (cf.,~\cite{MR3465809}) of the convex,
continuously differentiable, and $T$-elliptic functional
$\E : W^{1,p}(M)\to [0,+\infty)$ defined by
\begin{displaymath}
  \E(u):=\tfrac{1}{p}\displaystyle\int_{M}\left(\abs{\nabla u}^{p}_{g} +m\,\abs{u}^{p}\right)\,\td\mu_{g} 
\end{displaymath}
for every $u\in V_{p,2}(M,\partial M)$. Thus, $A$ is a maximal monotone operator
with dense domain in the Hilbert space $L^2(\partial M)$. One immediately sees
that $A$ is homogeneous of order $\alpha=p-1$.\medskip

Next, suppose $f : \partial M\times\R\to \R$ is a
Lipschitz-continuous \emph{Carath\'eodory} function, that is, $f$
satisfies the following three properties:
\begin{align}
  \label{property:one-Charatheodory}
\bullet\quad & f(\cdot, u) : \partial M\to \R \text{ is measurable on $\partial M$ for every
$u\in \R$,}\\
  \label{property:two-Charatheodory}
\bullet\quad & f(x,0)=0\text{ for a.e. $x\in \partial M$, and}\\
 \notag
\bullet\quad & \text{there is a constant $\omega\ge 0$ such that }\\ \label{eq:2}
    \begin{split}
    \qquad\abs{f(x,u)-f(x,\hat{u})}\le \omega\,\abs{u-\hat{u}}\quad\,
  \text{for all $u$, $\hat{u}\in \R$, a.e. $x\in \partial M$.}
    \end{split}
\end{align}
Then, for every $1\le q\le \infty$,
$F : L^{q}(\partial M)\to L^{q}(\partial M)$
defined by
\begin{displaymath}
  %\label{eq:Nemytskif}
  F(u)(x):=f(x,u(x))\qquad\text{ for every $u\in L^{q}(\partial M)$}
\end{displaymath}
is the associated \emph{Nemytskii operator} on
$ L^{q}:=L^{q}(\partial M)$. Moreover, by~\eqref{eq:2}, $F$
is globally Lipschitz continuous on $L^{q}(\partial M)$
with constant $\omega\ge 0$ and $F(0)(x)=0$ for a.e.
$x\in \partial M$.\medskip

Under these assumptions, it follows from
Proposition~\ref{prop:completely-accretive} that the perturbed
operator $A+F$ in $L^{2}(\partial M)$ is an $\omega$-quasi $m$-completely
accretive operator with dense domain $D(A+F)=D(A)$ in $L^{2}(\partial M)$
(see~\cite{MR3369257} or \cite{CoulHau2016} for the details in the Euclidean case). Thus,
$-(A+F)$ generates a strongly continuous semigroup
$\{T_{t}\}_{t\ge 0}$ of Lipschitz-continuous mappings $T_{t}$ on
$L^{2}(\partial M)$ with Lipschitz constant $e^{\omega t}$. For every
$1\le q< \infty$, each $T_{t}$ admits a unique Lipschitz-continu\-ous
extension $T_{t}^{(q)}$ on $L^{q}(\partial M)$ with Lipschitz constant
$e^{\omega t}$ such that $\{T_{t}^{(q)}\}_{t\ge 0}$ is a strongly
continuous semigroup on $L^{q}(\partial M)$, and each $T_{t}^{(q)}$ is
Lipschitz-continu\-ous on
$\overline{L^{2}(\partial M)\cap L^{\infty}(\partial M)}^{\mbox{}_{L^{\infty}}}$ with respect
to the $L^{\infty}$-norm. According to Proposition~\ref{propo:properties-of-St},
for
\begin{displaymath}
\overline{A}^{\mbox{}_{L_{0}}}_{L^{q}(\partial M)}:=\overline{A}^{\mbox{}_{L_{0}}}\cap
(L^{q}(\partial M) \times L^{q}(\partial M))
\end{displaymath}
with $L_{0}:=L_{0}(\partial M,\bm{s}_{g})$, the operator
$-(\overline{A}^{\mbox{}_{L_{0}}}_{L^{q}(\partial M)}+F)$ is the unique infinitesimal
generator of $\{T_{t}^{(q)}\}_{t\ge 0}$ in $L^{q}$. Since
% \begin{displaymath}
%   T_{t}^{(q)}u=T_{t}u\qquad\text{for all $u\in L^{1}(\partial M)\cap
%     L^{\infty}(\partial M)$ and $t\ge 0$,}
% \end{displaymath}
% and since
\begin{equation}
  \label{eq:71}
  \overline{A}^{\mbox{}_{L_{0}}}_{L^{q}(\partial M)}u=Au \cap L^{q}(\partial M) 
\end{equation}
for every $u\in D(A)\cap L^{q}(\partial M)$, we call
$\overline{A}^{\mbox{}_{L_{0}}}_{L^{q}(\partial M)}$ the
\emph{realization in $L^{q}(\partial M)$} of the 
\emph{Dirichlet-to-Neumann operator} $A$ associated with the
$p$-Laplace Beltrami operator $\Delta_{p}$. 

For simplicity, we denote
the extension $T_{t}^{(q)}$ on $L^{q}(\partial M)$ of $T_{t}$
again by $T_{t}$.\medskip 

It is worth noting that for $1<p<N$, the semigroup
$\{T_{t}\}_{t\ge 0}$ generated by $-(A+F)$ has an immediate
regularization effect. Indeed, by the
Sobolev-trace inequality~\eqref{eq:68}, the operator $A+F$ satisfies the inequality
\begin{align*}
  \left[u_{\vert\partial M},(A+F)u_{\vert\partial M}\right]_{2}+\omega\,\norm{u_{\vert\partial M}}_{2}^{2}
            &= \int_{\Omega}\abs{\nabla u}_{g}^{p}+m\,\abs{u}^{p}\,\td\mu_{g}\\
           &\hspace{1cm}+\int_{\partial M}f(x,u)u+\omega\,\abs{u}^2\,\td\bm{s}_{g}\\
  &\ge \int_{\Omega}\abs{\nabla
    u}_{g}^{p}+m\,\abs{u}^{p}\,\td\mu_{g}\\
  &\ge \min\{1,m\}\,\norm{u}^{p}_{W^{1,p}(M)}\\
  &\ge C\,\norm{u_{\vert\partial M}}^{p}_{\frac{p(N-1)}{(N-p)}}
\end{align*}
for every $u\in D(A)$, where $[\cdot,\cdot]_{2}$ denotes the duality
brackets on $L^{2}(\partial M)$, and $C>0$ is a constant including
$\min\{1,m\}$ and the constant of the Sobolev-trance inequality. By \cite[Theorem~1.2]{CoulHau2016},
the semigroup $\{T_{t}\}_{t\ge 0}$ satisfies
\begin{equation}
  \label{eq:74}
  \norm{T_{t}u_{0}}_{L^{\frac{p(N-1)}{(N-p)}}(\partial M)}\le
  \left(\tfrac{C}{2}\right)^{\frac{1}{p}}\,t^{\frac{1}{p}}\,e^{\omega
    (\frac{2}{p}+1) t}\norm{u_{0}}_{L^{2}(\partial M)}^{\frac{2}{p}}
\end{equation}
for all $t>0$ and $u_{0}\in L^{2}(\partial M)$. Moreover,
by~\eqref{eq:74} and since
$\{T_{t}\}_{t\ge 0}$ has unique Lipschitz-continuous extension on
$L^{1}(\partial M)$, the same theorem infers that
the semigroup $\{T_{t}\}_{t\ge 0}$ satisfies for every $1\le q\le
(N-1)\, q_{0}/(N-p)$ satisfying $q>(2-p)(N-1)/(p-1)$ the following
\emph{$L^{q}$-$L^{\infty}$-regularity estimate}
\begin{equation}
  \label{eq:Lq-Linfty-estimates-diff}
  \norm{T_{t}u_{0}}_{L^{\infty}(\partial M)}\lesssim\,t^{-\alpha_{q}}\,e^{\omega\beta_{q}
    t} \,\norm{u_{0}}_{L^{q}(\partial M)}^{\gamma_{q}}
\end{equation}
% \eqref{eq:80}
for every $t>0$, $u_{0}\in L^{q}(\partial M)$, with exponents
\begin{displaymath}
  %\label{eq:exponents-dirichlet-estimates}
  % \mbox{}\qquad
  \begin{split}
    &
    \alpha_{q}=\frac{\alpha^{\ast}}{1-\gamma^{\ast}\left(1-\frac{q(N-p)}{(N-1)
          q_{0}} \right)},\qquad
    \beta_{q}=\frac{\frac{\beta^{\ast}}{2}+\gamma^{\ast}
      \frac{q(N-p)}{(d-1)q_{0}}}{1-\gamma^{\ast}\left(1-\frac{q(N-p)}{(N-1)
          q_{0}}\right)},\\
    & \mbox{}\qquad\quad \gamma_{q}=\frac{\gamma^{\ast}\,q(N-p)}{(N-1)
      q_{0}\left(1-\gamma^{\ast}\left(1-\frac{q (N-p)}{(N-1)
            q_{0}}\right)\right)},
  \end{split}
\end{displaymath}
where $q_{0}\ge p$ is chosen
(minimal) such that $\left(\frac{N-1}{N-p}-1\right)q_{0}+p-2>0$ and
\begin{displaymath}
  \begin{split}
    & \alpha^{\ast}:=\frac{N-p}{(p-1)\, q_{0}+(N-p)(p-2)},\quad
    \beta^{\ast}:=\frac{(\frac{2}{p}-1)N+p-\frac{2}{p}}{(p-1)
      q_{0}+(N-p)(p-2)}+1,\\
    &\mbox{}\hspace{3cm}\gamma^{\ast}:=\frac{(p-1)\, q_{0}}{(p-1)\,
      q_{0}+(N-p)(p-2)}.
  \end{split}
\end{displaymath}

% If $m=0$, then by combining the three
% inequalities~\eqref{eq:68}-\eqref{eq:73}, one finds that $A$ satisfies
% \begin{displaymath}
%   \left[u_{\vert\partial M}-\overline{u}_{_{\vert\partial
%       M}},Au_{\vert\partial M}\right]_{2}
%   \ge \tilde{C}\,\norm{u_{\vert\partial M}-\overline{u}_{_{\vert\partial M}}}^{p}_{\frac{q(N-1)}{(N-q)}}
% \end{displaymath}
% for every $u\in D(A)$.

 \subsection{Global regularity estimates on $\frac{\td u}{\dt}$}
\label{subsec:global-regularity-estimates}

Throughout this subsection, let $p\in
(1,\infty)\setminus\{2\}$. Since, the Dirichlet-to-Neumann operator
$A$ in $L^{2}(\partial M)$ is homogeneous of order $\alpha=p-1$,
identity~\eqref{eq:71} yields that for every $1\le q<\infty$, the
realization $\overline{A}^{\mbox{}_{L_{0}}}_{L^{q}(\partial M)}$ of
$A$ in $L^{q}(\partial M)$ is also homogeneous of order $p-1$. Thus,
by Corollary~\ref{cor:RN-Lipschitz-case}, for every $1<q<\infty$ and
$u_{0}\in L^{q}(\partial M)$, the function $u(t)=T_{t}u_{0}$%  of the
% Cauchy problem associated with the elliptic-parabolic
% problem~\eqref{eq:12}
is differentiable a.e. on $(0,\infty)$ and
satisfies
\begin{align*}
    \lnorm{\frac{\td T_{t}u_{0}}{\dt}_{\!\! +}}_{L^{q}(\partial M)}
    &\le \frac{\norm{u_{0}}_{L^{q}(\partial M)}}{\abs{p-2}\,t}\left[1+e^{\omega
    t} +\omega \int_{0}^{t}\left(1+e^{\omega
    s} \right)\,e^{\omega(t-s)}\ds\right]
\end{align*}
  for every $t>0$. Note, % if $\omega=0$ then this reduces to the known
  % inequality (cf.,~\cite{MR648452} and~\cite{MR3369257}), which also
  % holds for $q=1$. If $\omega>0$ then
  the right hand side of this estimate can be rearranged as follows
\begin{equation}
  \label{eq:48Lq}
    \lnorm{\frac{\td T_{t}u_{0}}{\dt}_{\!\! +}}_{L^{q}(\partial M)}
    % &\le \frac{\norm{u_{0}}_{L^{q}(\partial
  %     M)}}{\abs{p-2}\,t}\left[1+e^{\omega t} 
  %     +e^{\omega t} \int_{0}^{t}\left(\omega \,e^{-\omega s} +\omega
  %     \right)\ds\right]\\
  % &=
  \le \frac{\left[2 +\omega\, t\right]\,e^{\omega t}}{\abs{p-2}\,t}\norm{u_{0}}_{L^{q}(\partial M)}
\end{equation}
for every $t>0$. Since the boundary $\partial M$ is compact,
H\"older's inequality gives
\begin{displaymath}
  \lnorm{\frac{\td T_{t}u_{0}}{\dt}_{\!\! +}}_{L^{1}(\partial M)}\le
  \bm{s}_{g}^{1/q'}(\partial M) \frac{\left[2 +\omega\, t\right]\,e^{\omega t}}{\abs{p-2}\,t}\norm{u_{0}}_{L^{q}(\partial M)}
\end{displaymath}
for every $q>1$ and hence, if we fix $u_{0}\in L^{2}(\partial M)$,
then sending $q\to 1+$ in the above inequality shows
that~\eqref{eq:48Lq}, in particular, holds for $q=1$. By~\eqref{eq:Lq-Linfty-estimates-diff}, for either
$p>2$ or $(2N-1)/N<p<2$, one has that  $u_{0}\in L^{1}(\partial M)$
yields that $T_{t}u_{0}\in
L^{\infty}(\partial M)\hookrightarrow L^{2}(\partial M)$ for all
$t>0$. Hence, for this range of $p$, we get that \eqref{eq:48Lq} hold
for $q=1$ and $u_{0}\in L^{1}(\partial M)$.

Next, let
  $u_{0}\in L^{\infty}(\partial M)$ and $t>0$. We assume
  $\norm{\frac{\td T_{t}u_{0}}{\dt}_{\!\! +}}_{L^{\infty}(\partial M)}>0$ (otherwise,
  there is nothing to show). Then, for every
  $s\in (0,\norm{\frac{\td T_{t}u_{0}}{\dt}_{\!\! +}}_{L^{\infty}(\partial M)})$ and
  $2\le q<\infty$, Chebyshev's inequality yields
  \begin{displaymath}
    \bm{s}_{g}\left(\Bigg\{\labs{\frac{\td T_{t}u_{0}}{\dt}_{\!\! +}}\ge s
        \Bigg\}\right)^{1/q}\le \frac{\lnorm{\frac{\td T_{t}u_{0}}{\dt}_{\!\! +}}_{L^{q}(\partial M)}}{s}
  \end{displaymath}
  and so, by~\eqref{eq:48Lq},
  \begin{displaymath}
    s\,\bm{s}_{g}\left(\Bigg\{\labs{\frac{\td T_{t}u_{0}}{\dt}}\ge s
        \Bigg\}\right)^{1/q}\le 
\frac{\left[2 +\omega\, t\right]\,e^{\omega t}}{\abs{p-2}\,t}\norm{u_{0}}_{L^{q}(\partial M)}
  \end{displaymath}
 Thus and since $\lim_{q\to
    \infty}\norm{u_{0}}_{L^{q}(\partial
    M)}=\norm{u_{0}}_{L^{\infty}(\partial M)}$, sending
  $q\to+\infty$ in the last inequality, yields
\begin{displaymath}
    s \le \frac{\left[2 +\omega\, t\right]\,e^{\omega t}}{\abs{p-2}\,t}\norm{u_{0}}_{L^{\infty}(\partial M)}
  \end{displaymath}
  and since
  $s\in (0,\lnorm{\frac{\td T_{t}u_{0}}{\dt}_{\!\!  +}}_{L^{\infty}(\partial M)})$ was
  arbitrary, we have thereby shown that~\eqref{eq:48Lq} also holds for
  $q=\infty$.\medskip

% Since the manifold $(M,g)$ is compact and has a Lipschitz boundary
% $\partial M$, it follows by the same argument as above; one first covers $M$ by a
% finite family $((\Omega_{m},\phi_{m}))_{m=1}^{N}$ of charts $(\Omega_{m},\phi_{m})$
% such that for every $m$, the components $g_{ij}$ of the matrix
% $\bm{g}$ of the Riemannian metric $g$ satisfy~\eqref{eq:49} and then
% applies a partition of unity on $M$ to show that one can construct the
% Sobolev-trace inequality
% \begin{displaymath}
%   \norm{T(u)}_{p(d-1)/(d-p)}\le C\,\norm{u}_{W^{1,p}}
% \end{displaymath}
% from the classical one.

  Finally, for $p\in (1,N)\setminus\{2\}$, we can apply
  Corollary~\ref{cor:1} or, alternatively,
  combine~\eqref{eq:Lq-Linfty-estimates-diff} with \eqref{eq:48Lq} for
  $q=\infty$. Then, we find that
\begin{equation}
  \label{eq:62}
  \lnorm{\frac{\td T_{t}u_{0}}{\dt}_{\!\! +}}_{L^{\infty}(\partial M)}
  \lesssim \frac{2\,\left[2 +\tfrac{\omega}{2}\, t\right]\,e^{\omega\,(1+\frac{\beta_{q}}{2})
    t}}{\abs{p-2}\,t^{\alpha_{q}+1}}\,\norm{u_{0}}_{L^{q}(\partial M)}^{\gamma_{q}}
\end{equation}
for every $t>0$, $u_{0}\in L^{q}(\partial M)$, and $1\le q\le
(N-1)\, q_{0}/(N-p)$ satisfying $q>(2-p)(N-1)/(p-1)$.

By this computation together with Theorem~\ref{thm:2}, we can
  state the following regularity result on mild solutions to the
  elliptic-parabolic problem~\eqref{eq:12}.

  \begin{theorem}\label{thm:reg-DtN}
    Let $N\ge 2$ and $1<p<\infty$. Then every mild solution $u$ of the elliptic-parabolic
    problem~\eqref{eq:12} admits the following additional regularity.     
    \begin{enumerate}
      \item ({\rm $L^1$ Aronson-B\'enilan type estimates}) If either $(2N-1)/N<p<2$ or $p>2$, then for every $1\le
        q\le \infty$ and $u_{0}\in L^{q}(\partial\Omega)$, the mild solution 
        $u(t):=T_{t}u_{0}$ of the elliptic-parabolic
      problem~\eqref{eq:12} is differentiable for a.e. $t>0$, is a
      \emph{strong solution} in $L^{q}(\partial\Omega)$ of~\eqref{eq:12}, and satisfies
      \begin{displaymath}
        \lnorm{\frac{\td u}{\dt}_{\!\! +}\!\!(t)}_{L^{q}(\partial M)}
        \le \frac{\left[2 +\omega\, t\right]\,e^{\omega
            t}}{\abs{p-2}\,t}\norm{u_{0}}_{L^{q}(\partial M)}
        \qquad\text{for every $t>0$.}
      \end{displaymath}
      
    \item ({\rm Extrapolated $L^1$ Aronson-B\'enilan type estimates}) Let $p\in (1,N)\setminus\{2\}$. Then, in addition to
      statement~(1), for every
      $1\le q\le (N-1)\, q_{0}/(N-p)$ satisfying $q>(2-p)(N-1)/(p-1)$
      and $u_{0}\in L^{q}(\partial\Omega)$, the mild solution
      $u(t):=T_{t}u_{0}$ of the elliptic-parabolic
      problem~\eqref{eq:12} satisfies
      \begin{displaymath}
        \lnorm{\frac{\td u}{\dt}_{\!\! +}\!\! (t)}_{L^{\infty}(\partial M)}
        \lesssim \frac{2\,\left[2 +\tfrac{\omega}{2}\, t\right]\,e^{\omega\,(1+\frac{\beta_{q}}{2})
            t}}{\abs{p-2}\,t^{\alpha_{q}+1}}\,\norm{u_{0}}_{L^{q}(\partial M)}^{\gamma_{q}}\qquad\text{for every $t>0$.}
      \end{displaymath}

    \item ({\rm Point-wise Aronson-B\'enilan type estimates}) If
      either $(2N-1)/N<p<2$ or $p>2$, then for every $1\le q\le
      \infty$  and positive $u_{0}\in L^{q}(\partial\Omega)$, the strong
      solution $u$ of problem~\eqref{eq:12} satisfies
      \begin{displaymath}
        (p-2)\frac{\td u}{\dt}_{\!\!+}\!\!(t)\ge -\frac{u(t)}{t}+(p-2)\,g_{0}(t),
      \end{displaymath}
      for a.e. $t>0$, where $g_{0} : (0,\infty)\to L^q (\partial\Omega)$ is a
      measurable function.
    \end{enumerate}
  \end{theorem}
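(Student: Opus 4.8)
The plan is to identify~\eqref{eq:12} with the abstract perturbed Cauchy problem~\eqref{eq:10bis-no-F} and then to read off the three assertions from the results of Sections~\ref{sec:semigroups} and~\ref{sec:completely-accretive}. For a fixed exponent $q$ in the relevant range I would take $X=L^{q}(\partial M)$, let $A$ be the realization $\overline{A}^{\mbox{}_{L_{0}}}_{L^{q}(\partial M)}$ in $L^{q}(\partial M)$ of the Dirichlet-to-Neumann operator associated with $-\Delta_{p}+m\,\abs{\cdot}^{p-2}\cdot$, and let $F$ be the Nemytskii operator induced by the Carath\'eodory function $f$. Then $F$ is globally Lipschitz on $X$ with constant $\omega\ge 0$ and $F(0)=0$ by~\eqref{eq:2}, and by the identity~\eqref{eq:71} together with the homogeneity of the $L^{2}$-Dirichlet-to-Neumann operator, $A$ is homogeneous of order $\alpha=p-1\neq 1$; moreover $A$ is $m$-completely accretive in $X$ and $A+F$ is $\omega$-quasi $m$-completely accretive in $X$ by Proposition~\ref{prop:completely-accretive} and Proposition~\ref{propo:properties-of-St}. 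In particular $(L^{q}(\partial M),\norm{\cdot}_{q},\le)$ with the $\bm{s}_{g}$-a.e. ordering is a Banach lattice, and the complete accretivity of $A+F$ together with Remark~\ref{rem:1} forces the semigroup $\{T_{t}\}_{t\ge 0}$ generated by $-(A+F)$ to be order-preserving on $\overline{D(A)}^{\mbox{}_{X}}$; these are exactly the structural hypotheses needed below.

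To prove statement~(1) for $1<q<\infty$ I would use that $X=L^{q}(\partial M)$ is reflexive, so Corollary~\ref{cor:RN-Lipschitz-case} applied with $f\equiv 0\in BV(0,T;X)$ (hence $V_{0}(f,\cdot)\equiv 0$) gives that the mild solution $u(t)=T_{t}u_{0}$ is strong and satisfies~\eqref{eq:29}; a short computation with the expression~\eqref{eq:54} for $\alpha=p-1$ and $f\equiv 0$ then turns~\eqref{eq:29} into the asserted bound, exactly as carried out in the computation leading to~\eqref{eq:48Lq}. For $q=1$ and $q=\infty$ I would argue as sketched before the statement: since $\bm{s}_{g}(\partial M)<\infty$, every $u_{0}\in L^{2}(\partial M)$ already lies in $L^{q}(\partial M)$ for $q\in(1,2]$, so the reflexive case just proved combined with H\"older's inequality yields the bound for $q=1$ and such $u_{0}$; for an arbitrary $u_{0}\in L^{1}(\partial M)$ the restriction $(2N-1)/N<p<2$ or $p>2$ ensures, via the $L^{q}$-$L^{\infty}$ smoothing~\eqref{eq:Lq-Linfty-estimates-diff}, that $T_{t}u_{0}\in L^{\infty}(\partial M)\hookrightarrow L^{2}(\partial M)$ for every $t>0$, so one applies the previous step to the shifted orbit $t\mapsto T_{t+\varepsilon}u_{0}$ and lets $\varepsilon\to 0+$; and the $L^{\infty}$-bound is recovered from the $L^{q}$-bounds, $q\ge 2$, by the Chebyshev inequality argument. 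The step I expect to be the real obstacle is precisely the a.e. differentiability of $u$ (the strong-solution property) when $q=1$ and $q=\infty$, since those Lebesgue spaces do not enjoy the Radon-Nikod\'ym property; this is why~\eqref{eq:Lq-Linfty-estimates-diff} is invoked to drive the orbit into the reflexive scale, and failing that one falls back on the completely-accretive compactness machinery of Lemma~\ref{lem:compacteness-of-complete-semigroup} and Theorem~\ref{thm:regularity-of-complete-acc-homogen-operators}.

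Statement~(2) I would obtain by extrapolation: for $p\in(1,N)\setminus\{2\}$, apply Corollary~\ref{cor:1} with $Y=L^{q}(\partial M)$ and $X=L^{\infty}(\partial M)$, feeding in the $L^{q}$-$L^{\infty}$ regularity estimate~\eqref{eq:Lq-Linfty-estimates-diff} for the factor $T_{t/2}$ and the $L^{\infty}$-estimate from statement~(1) for the factor $T_{t/2}$ in the decomposition $T_{t}=T_{t/2}\circ T_{t/2}$; this delivers~\eqref{eq:62} with the indicated exponents $\alpha_{q}$, $\beta_{q}$, $\gamma_{q}$, and the strong-solution property is inherited from~(1).

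Finally, for statement~(3) I would invoke Theorem~\ref{thm:2} (equivalently, Theorem~\ref{thm:2bis} together with Corollary~\ref{cor:2bis}) with $X=L^{q}(\partial M)$: the Banach-lattice structure, the homogeneity of $A$ of order $\alpha=p-1\neq 1$, the Lipschitz continuity of $F$ with $F(0)=0$, and the order-preservation of all mild solutions of~\eqref{eq:12}---all recorded in the first paragraph---are exactly the hypotheses, while the extra requirement $\frac{\td u}{\dt}_{\!\! +}\in L^{1}_{loc}([0,\infty);L^{q}(\partial M))$ is guaranteed by statement~(1) under the stated range of $p$. Theorem~\ref{thm:2} then gives $(\alpha-1)\frac{\td u}{\dt}_{\!\! +}\!(t)\ge -u(t)/t+(\alpha-1)g_{0}(t)$ for a.e. $t>0$ with a measurable $g_{0}:(0,\infty)\to L^{q}(\partial M)$; substituting $\alpha=p-1$ yields the claimed point-wise Aronson-B\'enilan type estimate and completes the proof.
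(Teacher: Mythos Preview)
Your proposal is correct and follows essentially the same route as the paper's proof in Section~\ref{subsec:global-regularity-estimates}: the reflexive case $1<q<\infty$ via Corollary~\ref{cor:RN-Lipschitz-case} leading to~\eqref{eq:48Lq}, the endpoint $q=1$ via H\"older and the smoothing~\eqref{eq:Lq-Linfty-estimates-diff} to push the orbit into $L^{2}$, the endpoint $q=\infty$ via Chebyshev and $q\to\infty$, statement~(2) via Corollary~\ref{cor:1}, and statement~(3) via Theorem~\ref{thm:2}. You even flag the same delicate point (a.e.\ differentiability at the non-reflexive endpoints) and its resolution through the $L^{q}$-$L^{\infty}$ smoothing, which is exactly how the paper handles it.
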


%%%%%%%%%%%%%%%%%%%%%%%%%%%%%%%%%%%%%%%%%%%%%%%%%%%%
%---------------------------------------------------------
%
%                  R E F E R E N C E S
%
%---------------------------------------------------------
%%%%%%%%%%%%%%%%%%%%%%%%%%%%%%%%%%%%%%%%%%%%%%%%%%%%

 %\bibliographystyle{doi}
 %\bibliography{citations}

\begin{thebibliography}{10}

\bibitem{MR656651}
N.~D. Alikakos and R.~Rostamian, \emph{Gradient estimates for degenerate
  diffusion equations. {I}}, Math. Ann. \textbf{259} (1982), 53--70.
  \href{http://dx.doi.org/10.1007/BF01456828}{\nolinkurl{doi:10.1007/BF01456828}}

\bibitem{MR2103696}
W.~Arendt, \emph{Semigroups and evolution equations: functional calculus,
  regularity and kernel estimates}, Evolutionary equations. {V}ol. {I}, Handb.
  Differ. Equ., North-Holland, Amsterdam, 2004, pp.~1--85.

\bibitem{MR2798103}
W.~Arendt, C.~J.~K. Batty, M.~Hieber, and F.~Neubrander, \emph{Vector-valued
  {L}aplace transforms and {C}auchy problems}, second ed., Monographs in
  Mathematics, vol.~96, Birkh\"auser/Springer Basel AG, Basel, 2011.
  \href{http://dx.doi.org/10.1007/978-3-0348-0087-7}{\nolinkurl{doi:10.1007/978-3-0348-0087-7}}

\bibitem{MR255996}
D.~G. Aronson, \emph{Regularity properties of flows through porous media: {T}he
  interface}, Arch. Rational Mech. Anal. \textbf{37} (1970), 1--10.
  \href{http://dx.doi.org/10.1007/BF00249496}{\nolinkurl{doi:10.1007/BF00249496}}

\bibitem{MR524760}
D.~G. Aronson and P.~B\'{e}nilan, \emph{R\'{e}gularit\'{e} des solutions de
  l'\'{e}quation des milieux poreux dans {${\bf R}^{N}$}}, C. R. Acad. Sci.
  Paris S\'{e}r. A-B \textbf{288} (1979), A103--A105.

\bibitem{MR1219716}
G.~Auchmuty and D.~Bao, \emph{Harnack-type inequalities for evolution
  equations}, Proc. Amer. Math. Soc. \textbf{122} (1994), 117--129.
  \href{http://dx.doi.org/10.2307/2160850}{\nolinkurl{doi:10.2307/2160850}}

\bibitem{MR2582280}
V.~Barbu, \emph{Nonlinear differential equations of monotone types in {B}anach
  spaces}, Springer Monographs in Mathematics, Springer, New York, 2010.
  \href{http://dx.doi.org/10.1007/978-1-4419-5542-5}{\nolinkurl{doi:10.1007/978-1-4419-5542-5}}

\bibitem{MR1164641}
P.~B\'enilan and M.~G. Crandall, \emph{Completely accretive operators},
  Semigroup theory and evolution equations ({D}elft, 1989), Lecture Notes in
  Pure and Appl. Math., vol. 135, Dekker, New York, 1991, pp.~41--75.

\bibitem{MR648452}
P.~B{\'e}nilan and M.~G. Crandall, \emph{Regularizing effects of homogeneous
  evolution equations}, Contributions to analysis and geometry ({B}altimore,
  {M}d., 1980), Johns Hopkins Univ. Press, Baltimore, Md., 1981, pp.~23--39.

\bibitem{Benilanbook}
P.~B{\'e}nilan, M.~G. Crandall, and A.~Pazy, \emph{Evolution problems governed
  by accretive operators}, book in preparation, 1994.

\bibitem{MR928802}
C.~Bennett and R.~Sharpley, \emph{Interpolation of operators}, Pure and Applied
  Mathematics, vol. 129, Academic Press, Inc., Boston, MA, 1988.

\bibitem{bevilacqua2020aronsonbnilan}
G.~Bevilacqua, B.~Perthame, and M.~Schmidtchen, \emph{The {A}ronson-{B}\'enilan
  estimate in {L}ebesgue spaces}, 2020, pp.~1--20, preprint available at
  \href{http://arxiv.org/abs/2007.15267}{http://arxiv.org/abs/2007.15267}.

\bibitem{MR0348562}
H.~Br\'ezis, \emph{Op\'erateurs maximaux monotones et semi-groupes de
  contractions dans les espaces de {H}ilbert}, North-Holland Publishing Co.,
  Amsterdam-London; American Elsevier Publishing Co., Inc., New York, 1973,
  North-Holland Mathematics Studies, No. 5. Notas de Matem\'atica (50).

\bibitem{MR3394614}
H.-D. Cao and M.~Zhu, \emph{Aronson-{B}\'{e}nilan estimates for the porous
  medium equation under the {R}icci flow}, J. Math. Pures Appl. (9)
  \textbf{104} (2015), 729--748.
  \href{http://dx.doi.org/10.1016/j.matpur.2015.05.001}{\nolinkurl{doi:10.1016/j.matpur.2015.05.001}}

\bibitem{MR3765564}
\bysame, \emph{Aronson-{B}\'{e}nilan estimates for the fast diffusion equation
  under the {R}icci flow}, Nonlinear Anal. \textbf{170} (2018), 258--281.
  \href{http://dx.doi.org/10.1016/j.na.2018.01.006}{\nolinkurl{doi:10.1016/j.na.2018.01.006}}

\bibitem{MR1993968}
E.~Chasseigne, \emph{Une estimation de type {A}ronson-{B}\'{e}nilan}, C. R.
  Math. Acad. Sci. Paris \textbf{336} (2003), 991--996.
  \href{http://dx.doi.org/10.1016/S1631-073X(03)00255-3}{\nolinkurl{doi:10.1016/S1631-073X(03)00255-3}}

\bibitem{MR3465809}
R.~Chill, D.~Hauer, and J.~Kennedy, \emph{Nonlinear semigroups generated by
  {$j$}-elliptic functionals}, J. Math. Pures Appl. (9) \textbf{105} (2016),
  415--450.
  \href{http://dx.doi.org/10.1016/j.matpur.2015.11.005}{\nolinkurl{doi:10.1016/j.matpur.2015.11.005}}

\bibitem{CoulHau2016}
T.~Coulhon and D.~Hauer, \emph{Functional inequalities and regularizing effect
  of nonlinear semigroups - theory and application}, to appear in SMAI -
  Math\'ematiques et Applications, 2020, preprint available at
  \href{http://arxiv.org/abs/1604.08737}{http://arxiv.org/abs/1604.08737}.

\bibitem{MR647071}
M.~Crandall and M.~Pierre, \emph{Regularizing effects for {$u_{t}+A\varphi
  (u)=0$} in {$L^{1}$}}, J. Functional Analysis \textbf{45} (1982), 194--212.
  \href{http://dx.doi.org/10.1016/0022-1236(82)90018-0}{\nolinkurl{doi:10.1016/0022-1236(82)90018-0}}

\bibitem{MR670925}
M.~G. Crandall and M.~Pierre, \emph{Regularizing effects for {$u_{t}=\Delta
  \varphi (u)$}}, Trans. Amer. Math. Soc. \textbf{274} (1982), 159--168.
  \href{http://dx.doi.org/10.2307/1999502}{\nolinkurl{doi:10.2307/1999502}}

\bibitem{MR1103113}
E.~B. Davies, \emph{Heat kernels and spectral theory}, Cambridge Tracts in
  Mathematics, vol.~92, Cambridge University Press, Cambridge, 1990.

\bibitem{MR1230384}
E.~DiBenedetto, \emph{Degenerate parabolic equations}, Universitext,
  Springer-Verlag, New York, 1993.
  \href{http://dx.doi.org/10.1007/978-1-4612-0895-2}{\nolinkurl{doi:10.1007/978-1-4612-0895-2}}

\bibitem{MR2865434}
E.~DiBenedetto, U.~Gianazza, and V.~Vespri, \emph{Harnack's inequality for
  degenerate and singular parabolic equations}, Springer Monographs in
  Mathematics, Springer, New York, 2012.
  \href{http://dx.doi.org/10.1007/978-1-4614-1584-8}{\nolinkurl{doi:10.1007/978-1-4614-1584-8}}

\bibitem{MR2569498}
A.~Grigor'yan, \emph{Heat kernel and analysis on manifolds}, AMS/IP Studies in
  Advanced Mathematics, vol.~47, American Mathematical Society, Providence, RI;
  International Press, Boston, MA, 2009.

\bibitem{MR3369257}
D.~Hauer, \emph{The {$p$}-{D}irichlet-to-{N}eumann operator with applications
  to elliptic and parabolic problems}, J. Differential Equations \textbf{259}
  (2015), 3615--3655.
  \href{http://dx.doi.org/10.1016/j.jde.2015.04.030}{\nolinkurl{doi:10.1016/j.jde.2015.04.030}}

\bibitem{MR4026441}
D.~Hauer, Y.~He, and D.~Liu, \emph{Fractional powers of monotone operators in
  {H}ilbert spaces}, Adv. Nonlinear Stud. \textbf{19} (2019), 717--755.
  \href{http://dx.doi.org/10.1515/ans-2019-2053}{\nolinkurl{doi:10.1515/ans-2019-2053}}

\bibitem{MR4031770}
D.~Hauer and J.~M. Maz\'{o}n, \emph{Regularizing effects of homogeneous
  evolution equations: the case of homogeneity order zero}, J. Evol. Equ.
  \textbf{19} (2019), 965--996.
  \href{http://dx.doi.org/10.1007/s00028-019-00502-y}{\nolinkurl{doi:10.1007/s00028-019-00502-y}}

\bibitem{MR1481970}
E.~Hebey, \emph{Sobolev spaces on {R}iemannian manifolds}, Lecture Notes in
  Mathematics, vol. 1635, Springer-Verlag, Berlin, 1996.
  \href{http://dx.doi.org/10.1007/BFb0092907}{\nolinkurl{doi:10.1007/BFb0092907}}

\bibitem{MR1980979}
V.~G. Jakubowski and P.~Wittbold, \emph{Regularity of solutions of nonlinear
  {V}olterra equations}, J. Evol. Equ. \textbf{3} (2003), 303--319.
  \href{http://dx.doi.org/10.1007/978-3-0348-7924-8_16}{\nolinkurl{doi:10.1007/978-3-0348-7924-8_16}}

\bibitem{MR2921651}
X.~Jiang and C.~Liao, \emph{Local {A}ronson-{B}\'{e}nilan estimates for a
  weighted porous medium equation on {R}iemannian manifolds}, J. Math. Anal.
  Appl. \textbf{393} (2012), 89--96.
  \href{http://dx.doi.org/10.1016/j.jmaa.2012.03.018}{\nolinkurl{doi:10.1016/j.jmaa.2012.03.018}}

\bibitem{MR2487898}
P.~Lu, L.~Ni, J.-L. V\'{a}zquez, and C.~Villani, \emph{Local
  {A}ronson-{B}\'{e}nilan estimates and entropy formulae for porous medium and
  fast diffusion equations on manifolds}, J. Math. Pures Appl. (9) \textbf{91}
  (2009), 1--19.
  \href{http://dx.doi.org/10.1016/j.matpur.2008.09.001}{\nolinkurl{doi:10.1016/j.matpur.2008.09.001}}

\bibitem{MR0227584}
J.~Ne{\v{c}}as, \emph{Les m\'ethodes directes en th\'eorie des \'equations
  elliptiques}, Masson et Cie, \'Editeurs, Paris, 1967.

\bibitem{MR710486}
A.~Pazy, \emph{Semigroups of linear operators and applications to partial
  differential equations}, Applied Mathematical Sciences, vol.~44,
  Springer-Verlag, New York, 1983.
  \href{http://dx.doi.org/10.1007/978-1-4612-5561-1}{\nolinkurl{doi:10.1007/978-1-4612-5561-1}}

\bibitem{MR659697}
L.~A. Peletier, \emph{The porous media equation}, Applications of nonlinear
  analysis in the physical sciences ({B}ielefeld, 1979), Surveys Reference
  Works Math., vol.~6, Pitman, Boston, Mass.-London, 1981, pp.~229--241.

\bibitem{MR826651}
E.~I. Poffald and S.~Reich, \emph{An incomplete {C}auchy problem}, J. Math.
  Anal. Appl. \textbf{113} (1986), 514--543.
  \href{http://dx.doi.org/10.1016/0022-247X(86)90323-9}{\nolinkurl{doi:10.1016/0022-247X(86)90323-9}}

\bibitem{MR1113700}
M.~M. Rao and Z.~D. Ren, \emph{Theory of {O}rlicz spaces}, Monographs and
  Textbooks in Pure and Applied Mathematics, vol. 146, Marcel Dekker, Inc., New
  York, 1991.

\bibitem{MR1873467}
G.~R. Sell and Y.~You, \emph{Dynamics of evolutionary equations}, Applied
  Mathematical Sciences, vol. 143, Springer-Verlag, New York, 2002.
  \href{http://dx.doi.org/10.1007/978-1-4757-5037-9}{\nolinkurl{doi:10.1007/978-1-4757-5037-9}}

\bibitem{MR1218884}
N.~T. Varopoulos, L.~Saloff-Coste, and T.~Coulhon, \emph{Analysis and geometry
  on groups}, Cambridge Tracts in Mathematics, vol. 100, Cambridge University
  Press, Cambridge, 1992.

\bibitem{MR1260981}
J.~L. V\'{a}zquez, \emph{An introduction to the mathematical theory of the
  porous medium equation}, {S}hape optimization and free boundaries ({M}ontreal,
  {PQ}, 1990), NATO Adv. Sci. Inst. Ser. C Math. Phys. Sci., vol. 380, Kluwer
  Acad. Publ., Dordrecht, 1992, pp.~347--389.

\bibitem{MR2286292}
\bysame, \emph{The porous medium equation}, Oxford Mathematical Monographs, The
  Clarendon Press, Oxford University Press, Oxford, 2007, Mathematical theory.

\end{thebibliography}

\providecommand{\bysame}{\leavevmode\hbox to3em{\hrulefill}\thinspace}

\end{document}